\theoremstyle{plain}
\newtheorem*{M}{Main Theorem}
\newtheorem{corollary}{Corollary}[section]
\newtheorem{lemma}{Lemma}[section]
\newtheorem{remark}{Remark}[section]
\newtheorem{proposition}{Proposition}[section]
\newtheorem{definition}{Definition}[section]
\begin{document}
\title[Thurston type theorem for sub-hyperbolic rational maps]
{ Thurston type theorem for sub-hyperbolic rational maps}
\author{Gaofei Zhang and  Yunping Jiang}

\address{Department of  Mathematics \\ Nanjing University \\ 210093, P. R. China}
\email{zhanggf@hotmail.com}

\address{Department of Mathematics \\ Queens College of CUNY \\ Flushing,
NY 11367} \email{Yunping.Jiang@qc.cuny.edu}

\thanks{}

\subjclass[2000]{58F23, 30D05}

\maketitle

\begin{abstract}
In 1980's, Thurston established a combinatorial characterization for
post-critically finite rational maps. This criterion was then
extended by Cui, Jiang, and Sullivan  to sub-hyperbolic rational
maps. The goal of this paper is to present a new but simpler proof
of this result by adapting the argument in the proof of Thurston's
Theorem.
\end{abstract}

\section{Introduction}

Let $f:S^{2} \to S^{2}$ be an orientation-preserving branched
covering map of degree $d \ge 2$. We denote by $\deg_{x} f$ the
local degree of $f$ at $x$.  We will call
$$
\Omega_{f} = \{x\in S^{2}\:\big{|}\: \deg_{f}(x) \ge 2\}
$$
the critical set of $f$ and
$$
P_{f} = \overline{\bigcup_{k\ge 1}f^{k}(\Omega_{f})}.
$$
the post-critical set.  We say $f$ is post-critically finite if
$P_{f}$ is a finite set.  In 1980's, Thurston established a
combinatorial characterization for post-critically finite rational
maps.   The theorem says that if the associated orbifold
${\mathcal{O}}_{f}$ is hyperbolic, then   $f$ is combinatorially
equivalent to a rational map if and only if it has no Thurston
obstructions.  The basic idea of the proof is as follows.  Consider
the Teichm\"uller space $T_{f}$  modeled on $(S^{2}, P_{f})$. Then
$f$ induces an analytic  operator $\sigma_{f}: T_{f} \to T_{f}$.  It
turns out that the existence of a rational map which realizes $f$ is
equivalent to the existence of a fixed point of $\sigma_{f}$. The
proof is then reduced to showing that $\sigma_{f}$ is a strictly
contracting map. The reader may refer to \cite{DH} for a detailed
proof of this theorem.

A natural question is that to what extent, Thurston's theorem can be
extended to rational maps with infinitely many post-critical points.
It was proved by McMullen that having no Thurston obstruction is
essentially true for any rational map with a hyperbolic
orbifold\:---\:\:only trivial Thurston obstructions inside Siegel
disks or Herman rings may occur for a rational map with a hyperbolic
orbifold~\cite{Mc} .  In 1994, Cui, Jiang, and Sullivan established
a Thurston type theorem for sub-hyperbolic rational maps
(\cite{CJS}, see also \cite{CT},\cite{T}). The original proof of
Cui-Jiang-Sullivan's theorem is quite involved. The goal of this
paper is to give a new but simpler proof of this theorem by adapting
the argument used in the proof of Thurston's theorem.

Before we present this theorem, let us introduce some definitions
first. We say $f$ is geometrically finite if $P_{f}$ is an infinite
set but with finitely many accumulation points. Suppose that $f$ is
geometrically finite. Then it is not difficult to see that the
accumulation set of $P_{f}$ consists of finitely many periodic
cycles. We leave this to the reader as an exercise. Let $P'_{f}$
denote the set of all the accumulation points of $P_{f}$.

\begin{definition}\label{sub-hyperbolic}{\rm
Let $f: S^{2} \to S^{2}$ be a geometrically finite branched covering
map of degree $d \ge 2$. We say $f$ is a sub-hyperbolic
semi-rational branched covering  if
 for any  $a \in P'_{f}$ of period $p \ge 1$,
there is an open neighborhood $U$ of $a$, such that $f$ is
holomorphic in $U$, and moreover, if $\deg_{a} f^{p} = 1$, then
$$
f ^{ p}(z) = a + \lambda (z- a) + o(|z - a|) \hbox{ for } z \in U$$
where $0< |\lambda| < 1$ is some constant, and if $\deg_{a} f^{ p} =
k > 1$, then
$$
 f ^{p} (z) = a +  \alpha (z-a)^{k} +
o(|z-a|^{k}) \hbox{ for } z \in U
$$
where $\alpha \ne 0$ is some constant. }
\end{definition}

As in the post-critically finite case, one can define Thurston
obstructions for a sub-hyperbolic semi-rational branched covering
map $f$ in a similar way. If $\gamma$ is a simple closed curve in
$S^{2}\setminus P_{f}$, then the set $f^{-1}(\gamma)$ is a union of
disjoint simple closed curves. If $\gamma$ moves continuously, so
does each component of $f^{-1}(\gamma)$. A simple closed curve
$\gamma$ is non-peripheral if each component of $S^{2}\setminus
\gamma$ contains at least two points of $P_{f}$. Consider a
multi-curve
$$
\Gamma = \{\gamma_{1}, \cdots, \gamma_{n}\}
$$
of simple, closed, disjoint, non-homotopic, and non-peripheral
curves in $S^{2} \setminus P_{f}$.  We say that $\Gamma$ is
$f$-stable if for any $\gamma \in \Gamma$,  every non-peripheral
component of $f^{-1}(\gamma)$ is homotopic in $S^{2}\setminus P_{f}$
to an element of $\Gamma$.

For each $f$-stable multi-curve $\Gamma$, define a linear
transformation,
$$
f_{\Gamma}:{\Bbb R}^{\Gamma} \to {\Bbb R}^{\Gamma}
$$
as follows: let $\gamma_{i,j,\alpha}$ denote the components of
$f^{-1}(\gamma_{j})$ homotopic to $\gamma_{i}$ in $S^{2} \setminus
P_{f}$ and $d_{i,j,\alpha}$ be the degree of
$f|_{\gamma_{i,j,\alpha}}: \gamma_{i,j,\alpha} \to \gamma_{j}$.
Define
$$
f_{\Gamma}(\gamma_{j})= \sum_{i}\big{(}
\sum_{\alpha}\frac{1}{d_{i,j,\alpha}}\big{)}\gamma_{i}.
$$
Since the matrix of $f_{\Gamma}$ is non-negative, there exists a
largest eigenvalue $\lambda(\Gamma,f) \in {\Bbb R}_{+}$.  We say
that a multi-curve $\Gamma$ is a Thurston obstruction of $f$ if
$\lambda(\Gamma,f) \geq 1$.

\begin{definition}~\label{clhequivlent}
{\rm Suppose $f$ and $g$ are two sub-hyperbolic semi-rational
branched coverings. We say that they are
CLH-equivalent(combinatorially and locally holomorphically
equivalent) if there exist a pair of homeomorphisms $\phi: S^{2} \to
S^{2}$ and $\psi: S^{2} \to S^{2}$  such that
\begin{itemize}
\item $\psi$ is  isotopic to $\phi$ rel $\overline{P}_f$,
\item $\phi f=g\psi$,
\item $\phi|U_{f}=\psi|U_{f}$ is holomorphic on some open set $U_{f}\supset P_{f}'$.
\end{itemize}}
\end{definition}

Now let us state the Thurston type theorem for sub-hyperbolic
rational maps.
\begin{M}
Suppose $f$ is a sub-hyperbolic semi-rational branched covering.
Then $f$ is CLH-equivalent to a rational map $R$ if and only if $f$
has no Thurston obstructions. In this case, the rational map $R$ is
unique up to a M\"{o}bius conjugation of the Riemann sphere.
\end{M}
\begin{remark}{\rm
There are  branched covering maps of the sphere which are
geometrically finite and having no Thurston obstructions but are not
combinatorially equivalent to rational maps. For the construction of
such maps, see \cite{CJS2}.}
\end{remark}
 The proof of the "only if" part follows from a theorem of
McMullen(see Appendix B of \cite{Mc}). The main task of this paper
is to  prove the "if" part.

The essential difference between the post-critically finite case and
the sub-hyperbolic  case is that in the first case, the
post-critical set is a finite set and the Thurston pull back induces
an analytic operator defined on a finite-dimensional Teichm\"uller
space, while in the latter case, the post-critical set is an
infinite set and therefore, the induced operator is defined on an
infinite-dimensional Teichm\"uller space.   However, we observe in
this paper that, in both cases, the following bounded geometry
properties are similar. This allows  us to prove the latter case by
adapting the argument in the proof of the first case.

In the post-critically finite case, the base point of the
Teichm\"uller space is the Riemann sphere minus the set of finite
number of post-critical points. The branched covering induces a
pull-back operator on this Teichm\"uller space. Iterations of this
operator produce a sequence of sets of finite number of points in
the Riemann sphere. The bounded geometry in this case means that
there is a positive constant such that any two points in any element
of this sequence have spherical distance greater than or equal to
this constant.

In the sub-hyperbolic  case, the base point of the Teichm\"uller
space is the Riemann sphere minus the union of finitely many points
and topological disks. Iterations of the pull-back operator produce
a sequence of sets of finite number of points plus finite number of
disks in the Riemann sphere. The bounded geometry in this case means
that there is a positive constant such that in any element of this
sequence, the spherical distance between any two points,   any point
and any disk,  or any two disks is greater than or equal to this
constant; moreover, any disk in any element of this sequence
contains another round disk of radius greater than or equal to this
constant.

The paper is organized as follows. In \S2, we  prove the Shielding
Ring Lemma. The proof is elementary but it is crucial in our
construction of the Teichm\"uller space. In \S3, we  construct the
Teichm\"uller space $T_{f}$. In $\S4$, we  introduce the pull back
operator $\sigma_{f}: T_{f} \to T_{f}$. In \S5, we  introduce the
concept of bounded geometry. In \S6, we  prove that bounded geometry
implies the strictly contracting property of $\sigma_{f}$. In $\S7$,
we prove that no Thurston obstruction implies the bounded geometry.
This completes the proof of the Main Theorem.

\vskip20pt

\noindent {\bf Acknowledgement:} This work is based on part of the
first author's 2002-CUNY PH.D. thesis~\cite{Zh}.  The authors would
like to thank Professors Guizhen Cui, Dennis Sullivan, and Tan Lei
for many  conversations and help during this research.


\section{Shielding Ring Lemma}

We say an open annulus $A$ is attached to an open topological disk
$D$ from the outside if $A$ and $D$ are disjoint but  $\partial D$
 is the inner  boundary component of the annulus $A$.
Then $\overline{D\cup A}$ is a larger closed disk.

Suppose that $f$ is a sub-hyperbolic semi-rational branched
covering. Let $P_f'=\{ a_{i}\}$. The main purpose of this section is
to prove the following lemma.

\begin{lemma}[Shielding Ring Lemma]~\label{srl}
There is a collection $\{ D_{i}\}$ of open  disks and a collection
of open  annuli $\{A_{i}\}$ such that
\begin{itemize}
\item $a_{i}\in D_{i}$,
\item $\overline{D}_{i}\cap \overline{D_{j}} =\emptyset$ for $i \ne j$,

\item for each $i$, $A_{i}$ is an annulus attaching $D_{i}$ from the outside
such that $\overline{A_{i}}\cap P_{f} = \emptyset$,
\item $f$ is holomorphic on $\overline{D_{i}}\cup A_{i}$,
\item every $f(A_{i})$ is contained in
some $D_{j}$.
\end{itemize}
\end{lemma}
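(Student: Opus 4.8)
The plan is to treat one periodic cycle of $P'_f$ at a time, build the disks and annuli in a small neighbourhood of that cycle using the normal‑form coordinates of Definition~\ref{sub-hyperbolic}, and finally shrink everything so that disks attached to different cycles become disjoint --- legitimate since $P'_f$ is finite and distinct cycles lie at positive distance. Fix a cycle $a_0,\dots,a_{p-1}$ with $f(a_i)=a_{i+1}$ (indices mod $p$) and set $g=f^{p}$. Since $a_i\in P'_f$, $f$ is holomorphic near $a_i$ and $g$ has there an attracting or super‑attracting fixed point, so by the Koenigs (resp.\ B\"ottcher) theorem there is a holomorphic coordinate $\phi_i$ near $a_i$ with $\phi_i(a_i)=0$ conjugating $g$ to $w\mapsto\lambda w$, $0<|\lambda|<1$ (resp.\ to $w\mapsto w^{k}$, $k\ge2$, on a disk of radius $<1$). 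In these coordinates the transition germs $\psi_{i+1}:=\phi_{i+1}\!\circ\! f\!\circ\!\phi_i^{-1}$ fix $0$ and satisfy $\psi_{i+1}(w)=\beta_{i+1}w^{k_i}(1+o(1))$ with $k_i=\deg_{a_i}f$, and composing them once around the cycle recovers $\phi_0\!\circ\! g\!\circ\!\phi_0^{-1}$; hence $\prod_i|\beta_{i+1}|=|\lambda|$ (with all $k_i=1$) in the attracting case, and $\prod_i k_i=k$ in the super‑attracting case.

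I would first record the local structure of $P_f$: each critical orbit accumulating on the cycle eventually enters a fixed small neighbourhood $\bigcup_i N_i$ of $\{a_i\}$ and thereafter stays in it, cycling; so, shrinking the $N_i$, the set $P_f\cap N_i$ is $a_i$ together with finitely many complete forward $g$‑orbits. Thus, in $\phi_i$‑coordinates, the set $S^{(i)}$ of moduli of the points of $P_f\cap N_i\setminus\{a_i\}$ is a finite union of geometric sequences of ratio $|\lambda|$ (resp.\ of sequences $\{t_s^{k^{n}}\}_n$) --- in logarithmic (resp.\ doubly‑logarithmic) scale, a finite union of arithmetic progressions of step $\log(1/|\lambda|)$ (resp.\ $\log k$); in particular $S^{(i)}$ is discrete with complementary gaps of a definite size at every small scale. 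Next, using the linearizations, choose radii $\rho_i>0$ so small that the disks $D_i=\phi_i^{-1}(\{|w|<\rho_i\})$ satisfy $f(\overline{D_i})\Subset D_{i+1}$; going around the cycle this is the closed chain $|\beta_{i+1}|\rho_i^{k_i}(1+o(1))<\rho_{i+1}$, whose solvability is exactly $\prod_i|\beta_{i+1}|=|\lambda|<1$ in the attracting case and, in the super‑attracting case, a consequence of some $k_i\ge2$ together with $\rho_i<1$, the $o(1)$‑errors being absorbed by keeping all $\rho_i$ small. After a harmless perturbation assume $\rho_i\notin S^{(i)}$; discreteness of $S^{(i)}$ then gives $\rho_i'>\rho_i$, as close to $\rho_i$ as we please, with $[\rho_i,\rho_i']\cap S^{(i)}=\emptyset$, and we put $A_i=\phi_i^{-1}(\{\rho_i<|w|<\rho_i'\})$. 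Now check the five conclusions: $a_i\in D_i$; the $\overline{D_i}$ are pairwise disjoint (small radii); $A_i$ is attached to $D_i$ from the outside; $\overline{A_i}\subset N_i$ avoids $P_f$ since it misses the moduli in $S^{(i)}$ and the point $a_i$; $f$ is holomorphic on $\overline{D_i}\cup A_i=\phi_i^{-1}(\{|w|<\rho_i'\})$ for $\rho_i'$ small; and since $\rho_i'$ may be taken arbitrarily close to $\rho_i$, the outer $\phi_{i+1}$‑modulus of $f(A_i)$, which is $\approx|\beta_{i+1}|(\rho_i')^{k_i}$, stays $<\rho_{i+1}$, i.e.\ $f(A_i)\subset D_{i+1}$.

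The point needing care is the simultaneous compatibility of all this around a whole cycle: the $D_i$ must nest strictly under $f$ --- which pins down their $\phi_i$‑radii by a closed chain of inequalities that is solvable precisely because of the contraction $|\lambda|<1$ (resp.\ $k\ge2$) --- while at the same time each $\partial D_i$ must sit in a gap of the discrete set $S^{(i)}$ so that an avoiding collar $A_i$ exists, and that collar, though free to be as thin as needed, must still be carried inside $D_{i+1}$. Given the (doubly‑)geometric description of $P_f$ near the cycle, all of this comes down to fixing the small parameters in the right order; the super‑attracting case runs exactly as the attracting one but in the doubly‑logarithmic scale and using that the possibly branched transitions behave like $\psi_{i+1}(w)\approx\beta_{i+1}w^{k_i}$ with $\prod_i k_i=k$.
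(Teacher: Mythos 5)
Your proof is correct, and the essential tools (attracting/super-attracting linearization, choosing radii that avoid $P_f$, shrinking everything) are the ones the paper uses; but you take a substantially more elaborate route. The paper only ever introduces the single linearizing coordinate $\phi$ at $a_1$. Because $P_f$ is countable and $f$ has finite degree, only countably many radii $r$ have $\bigcup_{i\ge 0} f^i(\phi^{-1}(\mathbb{T}_r)) \cap P_f \ne\emptyset$; one picks a good radius $a$, takes a thin annulus $H=\{a<|z|<b\}$ so small that $\bigcup_{0\le i<p} f^i(\phi^{-1}(\overline H))$ misses $P_f$ and $f^p(\phi^{-1}(\overline H))\subset U_a$, subdivides $H$ into $p$ concentric rings $H_1,\dots,H_p$, and simply defines $D_1=U_a$, $A_1=\phi^{-1}(H_1)$, and for $i\ge 1$, $D_{i+1}=f^i(\phi^{-1}(\{|z|<r_i\}))$, $A_{i+1}=f^i(\phi^{-1}(H_{i+1}))$. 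Because every disk and ring around the cycle is the push-forward of a single coordinate, your transition germs $\psi_{i+1}$ are all the identity, so the closed chain of inequalities $|\beta_{i+1}|\rho_i^{k_i}(1+o(1))<\rho_{i+1}$ never arises, and there is no need to track how the radii interact around the cycle; the only thing to verify is that one circle and its first $p$ images avoid $P_f$ and that the $p$-th image re-enters $U_a$, which is a one-line compactness argument once the radius is generic. Your argument also requires the detailed local description of $P_f$ near the cycle (finitely many geometric or doubly-geometric tails, so that $S^{(i)}$ is discrete and has gaps), whereas the paper's choice of a good circle uses only the weaker fact that $P_f$ is countable. Both arguments are valid; the one-coordinate push-forward simply makes all of your bookkeeping disappear.
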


\begin{proof}

Since $P_{f}'$ consists of finitely many   periodic cycles, it is
sufficient to find $D_{i}$ and $A_{i}$ for each periodic cycle.

Suppose
$$
\{ a_{1},\cdots, a_{p}\}
$$
is a periodic cycle in $P_{f}'$ such that
$$
f(a_{i}) =a _{i+1\pmod{p}}, \quad 1\leq i\leq p.
$$
This periodic cycle is either attracting or super-attracting. Let us
assume that we are in the attracting case. That is, we can find a
topological disk $W$ containing $a_{1}$ and a holomorphic
isomorphism $\phi: W \to \Delta$ such that $\phi \circ f^{p} \circ
\phi^{-1}: \Delta \to \Delta$ is equal to $\lambda z$ for some $0<
|\lambda|<1$. The super-attracting case can be treated in a similar
way by making minor changes.

For $0< r  < 1$, let ${\Bbb T}_{r} = \{z\:\big{|}\:|z| = r\}$ and
$\Delta_{r} = \{z \:\big{|}\: |z| < r\}$. Let $U_{r} =
\phi^{-1}({\Delta}_{r})$. Note that there are only countably many
$r$ such that
$$
\bigcup_{i \ge 0} f^{i}(\partial U_{r}) \cap  P_{f} \ne \emptyset.
$$
So we can take $0< a < 1$ such that
\begin{equation}\label{value-a}
\bigcup_{i \ge 0} f^{i}(\partial U_{a}) \cap  P_{f} \ne \emptyset.
\end{equation}

Let  $b = a + \epsilon < 1$ for some $\epsilon
> 0$ small. Let
$$
H = \{z\:\big{|}\:a < |z| < b\}.
$$
From (\ref{value-a}), it follows that by taking $\epsilon>0$ small,
we can assume
\begin{itemize}
\item[1.] $\bigcup_{0\le i \le p-1} f^{i}(\phi^{-1}(\overline{H})) \cap  P_{f} =
\emptyset$, and
\item[2.] $f^{p}(\phi^{-1}(\overline{H})) \subset U_{a}$.
\end{itemize}
Now divide the annulus $H$ into $p$ sub-annuli $H_{1}, \cdots,
H_{p}$ as follows. Take $a=r_{0}< r_{1}< r_{2}<\cdots<r_{p} = b$.
Let $H_{i} = \{z\:\big{|}\: r_{i-1} < |z|<r_{i}\}$. Let $E_{i} =
\phi^{-1}(H_{i})$. Define
$$
D_{1} =U_{a}\quad \hbox{and} \quad A_{1}=E_{1},
$$
and
$$
D_{2} = f(\overline{U_{a}}\cup E_{1}) \quad \hbox{and}\quad A_{2}
=f(E_{2}).
$$
For $3\leq i\leq p$,
$$
D_{i} = f^{i-1} (\overline{U_{a}} \cup \cup_{1\leq j\leq
i-2}\overline{E}_{j} \cup E_{i-1}) \quad \hbox{and} \quad A_{i}
=f^{i-1}(E_{i}).
$$

After we did for every periodic cycle in $P_{f}'$, we put those
disks and annuli together to get a collection  of open topological
disks $\{ D_{i}\}$ and a collection  of open  annuli $\{ A_{i}\}$.
By the construction, it is clear that they satisfy the properties in
Lemma~\ref{srl}. This completes the proof of Lemma~\ref{srl}.
\end{proof}

We call the disk  $ D_{i}$ in  Lemma~\ref{srl} a holomorphic disk
and the corresponding annulus $A_{i}$ a shielding ring.
\begin{remark}\label{analytic-boundary}{\rm
By our construction, the boundary of every $D_{i}$ is a
real-analytic curve.}
\end{remark}


\section{The Teichm\"uller space $T_{f}$}
  Let us now fix
a collection of holomorphic disks $\{ D_{i}\}$ and a collection of
shielding rings $\{ A_{i}\}$ for $f$. Let
$$
D_{f} =\cup_{i} D_{i}\quad \hbox{and}\quad P_{1} = P_f\setminus
D_{f}.
$$
By taking $D_{i}$ smaller, we may assume that $\#(P_{1})\geq 3$. We
may further assume that $\{0, 1, \infty\} \subset P_{1}$. Define
$$
Q_f = P_{1} \cup \overline{D}_{f} \quad \hbox{and} \quad X_{f}
=\partial Q_f = P_1 \cup \partial D_{f}.
$$

\begin{definition}\label{Teichmuller space}
{\rm The Teichm\"uller space $T_{f}$ is the Teichm\"uller space
modeled on $(S^{2} \setminus Q_{f}, X_{f})$. }
\end{definition}

The Teichm\"{u}ller space $T_{f}$  can be constructed as the space
of all the Beltrami coefficients defined  on $S^{2}\setminus Q_{f}$
module the following equivalent relation: let $\mu$ and $\nu$ be two
Beltrami coefficients defined on $S^{2}\setminus Q_{f}$ and let
$$
\phi_{\mu}: S^{2} \setminus Q_{f} \to S \hbox{ and } \phi_{v}:
S^{2}\setminus Q_{f}\to R
$$
be two quasiconformal homeomorphisms which solve the Beltrami
equations given by $\mu$ and $\nu$, respectively. we say $\mu$ and
$\nu$ are equivalent to each other if there exists a holomorphic
isomorphism $h: R \to S$ such that the map $\phi_{\mu}$ and $h\circ
\phi_{\nu}$ are isotopic to each other rel $X_{f}$, that is, there
is a continuous family of quasiconformal homeomorphisms $g_{t}:S^{2}
\setminus Q_{f} \to S, \:0\le t \le 1$,  such that
\begin{itemize}
\item[1.] $g_{0} = \phi_{\mu}$,
\item[2.] $g_{1} = h\circ \phi_{\nu}$,
\item[3.] $g_{t}(z) = \phi_{\mu}(z) = (h\circ \phi_{\nu})(z)$ for all $0 \le t \le
1$ and $z \in X_{f}$.
\end{itemize}

Now let us give a brief description of the relative background about
the Teichm\"{u}ller space $T_{f}$.   The reader may refer to
\cite{Ga} for more knowledge in this aspect.

Let $\mu$ be a Beltrami coefficient defined on $S^{2} \setminus
Q_{f}$. Let $$\phi_{\mu}: S^{2}\setminus Q_{f} \to  \phi_{\mu}(S^{2}
\setminus Q_{f})$$ be a quasiconformal homeomorphism which solves
the Beltrami equation given by $\mu$. Let
$$
{\rm M}_{u} = \{\xi(z) \frac{d\overline{z}}{dz} \: \big{|}\: \xi(z)
\hbox{ is measurable and } \|\xi\|_{\infty} < \infty\} $$  be  the
linear space of all the Beltrami differentials  defined on
$\phi_{\mu}(S^{2} \setminus Q_{f})$. Let
$$
{\rm A}_{u} = \{q(z) dz^{2} \: \big{|}\: q(z) \hbox{ is holomorphic
and } \int_{\phi_{\mu}(S^{2} \setminus Q_{f})} |q(z)| dz \wedge d
\overline{z} < \infty\}
$$
be the linear space of all the integrable holomorphic quadratic
differentials  defined on $\phi_{\mu}(S^{2} \setminus Q_{f})$.

A Beltrami differential $\xi(z) \frac{d\overline{z}}{dz} \in {\rm
M}_{\mu}$ is called $\emph{infinitesimally trivial}$ if
$$
\int_{\phi_{\mu}(S^{2} \setminus Q_{f})} \xi(z) q(z) dz \wedge d
\overline{z} = 0
$$
holds for all $q(z)dz^{2} \in {\rm A}_{\mu}$.

Let ${\rm N}_{\mu} \subset {\rm M}_{\mu}$ be the subspace of all the
$\emph{infinitesimally trivial}$ Beltrami differentials. Then the
tangent space of $T_{f}$ at $[\mu]$ is isomorphic to the quotient
space ${\rm M}_{\mu}/{\rm N}_{\mu}$.

Let $\mu$ be a Beltrami coefficient defined on $S^{2} \setminus
Q_{f}$. Let $\xi$ be a tangent vector of $T_{f}$ at $[\mu]$ which is
identified with a Beltrami differential $\xi(z)
\frac{d\overline{z}}{dz}$ defined on $\phi_{\mu}(S^{2} \setminus
Q_{f})$.

\begin{definition}\label{Tech-norm}{\rm
The Teichm\"{u}ller norm of the tangent vector $\xi $ is defined to
be
$$
\|\xi\| = \sup \: \Big |\int_{\phi_{\mu}(S^{2} \setminus Q_{f})}
q(z) \xi(z) dz \wedge d\overline{{z}}\Big|, $$ where the $\sup$ is
taken over all $q(z) dz^{2} \in {\rm  A}_{\mu}$ with $
\int_{\phi_{\mu}(S^{2} \setminus Q_{f})} |q(z)| dz \wedge
d\overline{{z}} = 1. $}
\end{definition}

\begin{definition}\label{Tech-metric}{\rm
Let $[\mu], [\nu] \in T_{f}$. The Teichm\"{u}ller distance
$d_{T}([\mu], [\nu])$ is define to be
$$
\frac{1}{2} \inf \log K(\phi_{\mu'} \circ \phi_{\nu'}^{-1})
$$
where $\phi_{\mu'}$ and $\phi_{\nu'}$ are quasi-conformal mappings
with Beltrami coefficients $\mu'$ and $\nu'$ and the inf is taken
over all $\mu'$ and $\nu'$ in the same Teichm\"{u}ller classes as
$\mu$ and $\nu$, respectively.}
\end{definition}

\begin{lemma}\label{infinitesimal-metric}
Let $\mu$ and $\nu$ be two Beltrami coefficients defined on $S^{2}
\setminus Q_{f}$. Then
$$
d_{T}([\mu], [\nu]) = \inf \int_{0}^{1} \|\tau'(t)\| dt
$$
where inf is taken over all the piecewise smooth curves $\tau(t)$ in
$T_{f}$ such that $\tau(0) = [\mu]$ and $\tau(1) = [\nu]$.
\end{lemma}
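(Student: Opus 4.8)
The plan is to establish the standard equivalence between the Teichm\"uller (extremal-length) metric and the integrated infinitesimal Finsler metric, following the classical argument for Teichm\"uller spaces of Riemann surfaces (as in \cite{Ga}), with the only modification being that our base surface $S^{2}\setminus Q_{f}$ is a surface with boundary (the real-analytic curves $\partial D_{i}$ together with the punctures of $P_{1}$). First I would verify that $T_{f}$ is a genuine Teichm\"uller space in the sense of the general theory: the cotangent space at $[\mu]$ is the Banach space ${\rm A}_{\mu}$ of integrable holomorphic quadratic differentials, the tangent space is the dual ${\rm M}_{\mu}/{\rm N}_{\mu}$, and the Teichm\"uller norm of Definition~\ref{Tech-norm} is precisely the dual norm on ${\rm M}_{\mu}/{\rm N}_{\mu}$. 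Because $\partial D_{i}$ is real-analytic (Remark~\ref{analytic-boundary}), one may double the surface across the boundary curves, so that integrable quadratic differentials on $S^{2}\setminus Q_{f}$ correspond (via the reflection) to differentials on a surface without boundary; this reduces all the analytic facts we need to the bordered/punctured surface case already in the literature.

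The proof then splits into the two inequalities. For ``$\ge$'' (the Finsler metric is no larger than the Teichm\"uller distance), given $[\mu],[\nu]\in T_{f}$ I would take a near-extremal quasiconformal map $\phi$ realizing $d_{T}([\mu],[\nu])$ and interpolate linearly through its complex dilatation: the path $t\mapsto t\mu_{\phi}/\|\mu_{\phi}\|_{\infty}\cdot(\ldots)$, more precisely the Teichm\"uller-geodesic path $t\mapsto[\frac{k t\, \bar q/|q|}{1+\cdots}]$ when $\phi$ is close to a Teichm\"uller map with quadratic differential $q$, is piecewise smooth, joins $[\mu]$ to $[\nu]$, and has infinitesimal length at each time equal to its speed; a direct computation shows $\int_{0}^{1}\|\tau'(t)\|\,dt\le \frac12\log K(\phi)+\varepsilon$. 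Letting $\phi$ be increasingly near-extremal and $\varepsilon\to0$ gives one direction. For ``$\le$'' I would use the infinitesimal form of Teichm\"uller's metric: along any piecewise smooth path $\tau$ from $[\mu]$ to $[\nu]$, the length $\int_{0}^{1}\|\tau'(t)\|\,dt$ dominates $d_{T}(\tau(0),\tau(1))$ because $d_{T}$ satisfies the triangle inequality and, locally, the Teichm\"uller distance between nearby points $\tau(t)$ and $\tau(t+\delta t)$ is bounded above by $\|\tau'(t)\|\,\delta t + o(\delta t)$ — this is exactly the statement that $\|\cdot\|$ is the infinitesimal form of $d_{T}$, which follows from the Reich--Strebel inequality (the ``fundamental inequality'' of Teichm\"uller theory) applied on $S^{2}\setminus Q_{f}$. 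Taking the infimum over paths yields $d_{T}([\mu],[\nu])\le \inf\int_{0}^{1}\|\tau'(t)\|\,dt$.

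The main obstacle is the ``$\le$'' direction, specifically making sure the Reich--Strebel / Hamilton--Krushkal type inequality is valid on our surface $S^{2}\setminus Q_{f}$, which carries boundary. The subtlety is that equivalence of Beltrami coefficients in Definition~\ref{Teichmuller space} requires the isotopy to fix $X_{f}=P_{1}\cup\partial D_{f}$ pointwise, so the relevant space of test quadratic differentials must be the integrable holomorphic ones on the open surface with no extra constraint along $\partial D_{f}$; the doubling trick of the first paragraph is what legitimizes using the classical inequality, since a differential integrable on $S^{2}\setminus Q_{f}$ extends by Schwarz reflection across each real-analytic curve $\partial D_{i}$ to an integrable differential on the doubled surface, and conversely. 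Once the Reich--Strebel inequality is in hand on the double, the standard chain of estimates (the same as in \cite{Ga} for finite-dimensional Teichm\"uller spaces, valid verbatim in the infinite-dimensional setting) closes the argument. I would remark that no compactness or finite-dimensionality is used, so the identity holds for the infinite-dimensional $T_{f}$.
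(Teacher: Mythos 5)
The paper does not prove this lemma at all: it is recorded as a known fact, with the reader referred to Gardiner's book \cite{Ga} for the general theory of the Teichm\"uller space modeled on $(S^2\setminus Q_f, X_f)$. So your proposal cannot be compared to a paper proof; I can only assess it on its own terms.

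Your overall two-sided strategy is the right one (bound the Teichm\"uller distance from above by the Finsler length of a near-Teichm\"uller interpolating path, and from below via the Reich--Strebel Main Inequality applied infinitesimally). But the doubling step that you lean on to ``legitimize'' the Reich--Strebel inequality on a bordered surface contains a genuine error. You assert that an integrable holomorphic quadratic differential on $S^{2}\setminus Q_{f}$ extends by Schwarz reflection across the real-analytic curves $\partial D_{i}$ to an integrable differential on the double. That is false in general: Schwarz reflection of a quadratic differential requires it to be real along the boundary curve (equivalently, symmetric on the double), and the paper's ${\rm A}_{\mu}$ imposes no such reality condition. Indeed the fact that ${\rm A}_{\mu}$ is taken to be \emph{all} integrable holomorphic quadratic differentials, rather than only the symmetric ones, is exactly the signature of the equivalence relation in Definition~\ref{Teichmuller space}, which fixes $X_{f}$ pointwise. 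If the quadratic differentials reflected, you would in effect be computing the Finsler metric of the reduced (boundary-free) Teichm\"uller space, which is smaller; the two metrics do not coincide.

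The good news is that the doubling is not needed. The set $S^{2}\setminus Q_{f}$ is a hyperbolic Riemann surface (a finitely connected plane domain with finitely many punctures), and the Reich--Strebel inequality holds there directly, in the ``strong'' form appropriate to isotopies that fix the ideal boundary pointwise, with exactly the unrestricted class of integrable holomorphic quadratic differentials as test objects. Likewise the Teichm\"uller-geodesic upper bound on path length needs no boundary symmetry. If you delete the doubling paragraph, phrase the infinitesimal $\le$-step as the Reich--Strebel inequality on $S^{2}\setminus Q_{f}$ itself (this is the form in \cite{Ga}), and keep the $\ge$-step as you wrote it, the argument becomes the standard proof that the Teichm\"uller metric is the inner metric of its Finsler structure, which is presumably what the authors had in mind when citing \cite{Ga}.
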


\section{The pull-back operator }

As in the post-critically finite case, we may assume that $f$ is a
quasiconformal map(This  is because except the finite holomorphic
disks, there are only finitely many points in $P_{f}$, and
therefore, the CLH-equivalent class of $f$ must contain a
quasiconformal branched covering map of the sphere). From now on, we
use ${\Bbb P}^{1}$ to denote to  the two sphere endowed with the
standard complex structure.

Remind that for a Beltrami coefficient $\mu$ defined defined on the
sphere $S^{2}$, the pull back of $\mu$ by $f$, which is denoted by
$f^{*}(\mu)$,  is defined to be
\begin{equation}\label{pull-back-coefficients}
(f^{*}\mu)(z) = \frac{\mu_f(z) + \mu(f(z)) \theta(z)}{1 +
\overline{\mu_f (z)} \mu(f(z)) \theta(z)}
\end{equation}
where $\theta(z)$ = $\overline{f_{z}}/f_{z}$ and $\mu_{f}(z) =
f_{\bar{z}} /f_{z}$. It is important to note that if $\mu$ depends
complex analytically on $t$, then so does $f^{*}(\mu)$.

Now let $\mu(z)$ be a Beltrami coefficient defined on $S^{2}
\setminus Q_{f}$. Define the Beltrami coefficient
$\text{Ext}(\mu)(z)$ on $S^{2}$ by setting
\begin{equation}
\text{Ext}(\mu)(z)  =
\begin{cases}
\mu(z) & \text{ for $z \in S^{2} \setminus Q_{f}$},\\
0 & \text{ for otherwise}.
\end{cases}
\end{equation}

By (\ref{pull-back-coefficients}),  $f^{*}(\text{Ext}(\mu))$  is a
Beltrami coefficient on the sphere $S^{2}$.  Let us simply use
$f^{*}(\mu)$ to denote the restriction of $f^{*}(\text{Ext}(\mu))$
on $S^{2} \setminus Q_{f}$.

\begin{lemma}\label{pull-back}
The map $f^{*}$ induces a complex analytic operator $\sigma_{f}:
T_{f} \to T_{f}$.
\end{lemma}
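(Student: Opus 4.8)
The plan is to verify that $f^*$ respects the equivalence relation defining $T_f$ and that it descends to a holomorphic self-map. First I would check that $f^*(\mu)$ is indeed a Beltrami coefficient \emph{supported off} $Q_f$, i.e. that $f^{*}(\mathrm{Ext}(\mu))$ vanishes on $Q_f$ after restriction. Since $f$ is holomorphic on each $\overline{D_i}\cup A_i$ (the Shielding Ring Lemma), $\mu_f\equiv 0$ there, so formula (\ref{pull-back-coefficients}) gives $(f^{*}\mu)(z)=\mu(f(z))\,\theta(z)$ on the holomorphic disks; but $f$ maps each $A_i$ into some $D_j$, hence $f(\overline{D_i})\subset \overline{D_j}$ and $\mathrm{Ext}(\mu)\equiv 0$ on $f(\overline{D_i})$, so $(f^{*}\mu)$ vanishes on $\overline{D_i}$. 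On the finitely many points of $P_1$ the statement is vacuous (measure zero). Hence $f^{*}(\mu)$ is a well-defined Beltrami coefficient on $S^2\setminus Q_f$, and it clearly has $\|f^*(\mu)\|_\infty<1$ since $f$ is quasiconformal.

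Next I would show $f^*$ descends to the Teichm\"uller space, i.e. that $[\mu]=[\nu]$ implies $[f^*\mu]=[f^*\nu]$. Let $\phi_\mu,\phi_\nu$ solve the corresponding Beltrami equations on $S^2\setminus Q_f$ and let $h$ be the holomorphic isomorphism with $\phi_\mu\simeq h\circ\phi_\nu$ rel $X_f$ through an isotopy $g_t$. One solves the Beltrami equations for $\mathrm{Ext}(\mu)$ and $\mathrm{Ext}(\nu)$ on the whole sphere, obtaining quasiconformal maps $\Phi_\mu,\Phi_\nu:\mathbb{P}^1\to\mathbb{P}^1$; then $\Phi_\mu\circ f$ (resp.\ $\Phi_\nu\circ f$) has Beltrami coefficient $f^*(\mathrm{Ext}(\mu))$ (resp.\ $f^*(\mathrm{Ext}(\nu))$), so the solving maps for $f^*\mu,f^*\nu$ factor through $f$. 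The isotopy $g_t$ lifts through $f$: because $g_t$ fixes $X_f=\partial Q_f$ pointwise and $f$ is a covering away from its branch points, and the pulled-back disks $f^{-1}(D_j)\cap$ (components over $D_i$) are the relevant pieces, one produces a continuous family $\widetilde g_t$ realizing an isotopy rel $X_f$ between the solving maps of $f^*\mu$ and $f^*\nu$, composed with a holomorphic isomorphism. This gives a well-defined map $\sigma_f:T_f\to T_f$, and $\sigma_f([\mu])=[f^*\mu]$.

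Finally, holomorphy: here I would invoke the remark following (\ref{pull-back-coefficients}) that if $\mu=\mu_t$ depends complex-analytically on a parameter $t$, then so does $f^*(\mathrm{Ext}(\mu_t))$, since (\ref{pull-back-coefficients}) is a fixed holomorphic (in fact rational) expression in $\mu(f(z))$ with coefficients depending only on $f$. Combining this with the Ahlfors--Bers theorem (holomorphic dependence of the solution of the Beltrami equation on the coefficient, for an $L^\infty$-analytic family), one concludes that $\sigma_f$ is complex analytic as a map between the complex Banach manifolds $T_f$; the standard criterion is that $\sigma_f$ composed with any local holomorphic chart is Gateaux-holomorphic and locally bounded.

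The step I expect to be the main obstacle is the descent to the quotient — specifically, producing the lifted isotopy $\widetilde g_t$ rel $X_f$. One must be careful that $g_t$ fixing $\partial D_f$ pointwise and mapping $S^2\setminus Q_f$ to itself lifts compatibly through $f$ over the holomorphic-disk pieces (where $f$ may be branched inside a $D_i$) and over $S^2\setminus(Q_f\cup f^{-1}(Q_f))$; the standard homotopy-lifting argument must be run on $S^2$ minus an appropriate finite set, using that the branch points of $f$ lie in $P_f\subset Q_f$ and checking that the complementary disks behave correctly. This is where the precise structure from the Shielding Ring Lemma (that $f$ is holomorphic on $\overline{D_i}\cup A_i$ and $f(A_i)\subset D_j$) is essential, so that the pulled-back complex structure is trivial on all the disk pieces and the lifted maps are again of the required form.
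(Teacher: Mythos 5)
Your proposal follows the same general outline as the paper (extend $\mu$ by zero to the sphere, solve the Beltrami equation, lift the isotopy through $f$, invoke holomorphic dependence for analyticity), but it leaves unresolved precisely the step that the paper's proof handles carefully — and that you yourself flag as the ``main obstacle.''

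The point you gloss over is the role of the holomorphic isomorphism $h$. You set up $\phi_\mu \simeq h\circ\phi_\nu$ rel $X_f$ and then speak of lifting the isotopy $g_t$ through $f$, ``composed with a holomorphic isomorphism.'' But in order to lift an isotopy through the branched covering $f$ you first need an isotopy of sphere maps relative to $Q_f$, not merely an isotopy of maps on $S^2\setminus Q_f$ relative to $X_f$; and whether the latter can be extended to the former depends entirely on what $h$ is. The paper resolves this by extending $h$ to a homeomorphism $\mathrm{Ext}(h)$ of the whole sphere (defined by $\phi_{\mathrm{Ext}(\mu)}\circ\phi_{\mathrm{Ext}(\nu)}^{-1}$ over $\phi_{\mathrm{Ext}(\nu)}(Q_f)$), observing that $\mathrm{Ext}(h)$ is holomorphic off the finite union of points and quasi-circles $\phi_{\mathrm{Ext}(\nu)}(X_f)$, invoking removability to conclude $\mathrm{Ext}(h)$ is a M\"obius map, and using the normalization $\{0,1,\infty\}\subset P_1$ to force $\mathrm{Ext}(h)=\mathrm{id}$. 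Only after this does one get $\phi_{\mathrm{Ext}(\mu)}=\phi_{\mathrm{Ext}(\nu)}$ on $Q_f$, hence an isotopy of sphere maps rel $Q_f$, hence (since $f(Q_f)\subset Q_f$) a lifted isotopy rel $Q_f$, and therefore rel $X_f$ after restriction. Without this argument you have not shown that the sphere-level solutions $\Phi_\mu$, $\Phi_\nu$ agree on $Q_f$, so the lifting step you describe does not get off the ground; it is not merely a homotopy-lifting technicality over the branch points.

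Two smaller remarks. First, the opening paragraph arguing that $f^*(\mathrm{Ext}(\mu))$ vanishes on $D_f$ is true (and a nice observation) but is not used by the paper and is not needed: the paper simply restricts $f^*(\mathrm{Ext}(\mu))$ to $S^2\setminus Q_f$ by fiat. Second, for analyticity the paper does not argue directly via the pull-back formula and Ahlfors--Bers; it uses the Ahlfors--Weill formula to produce a locally defined complex-analytic family of Beltrami representatives of a tangent curve, pulls this family back by $f$ via (\ref{pull-back-coefficients}), and then invokes the Bers embedding to conclude the resulting curve in $T_f$ is analytic. Your sketch of the analyticity step is in the right spirit, but the paper's version is more careful about producing a holomorphic section before pulling back.
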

\begin{proof}
Suppose $\mu$ and $\nu$ are two Beltrami coefficients defined  on
$S^{2} \setminus Q_{f}$ which are equivalent to each other. Let
$\text{Ext}(\mu)$ and $\text{Ext}(\nu)$  be their extensions to
$S^{2}$. Let $\phi_{\text{Ext}(\mu)}$ and $\phi_{\text{Ext}(\nu)}$
be the corresponding quasiconformal homeomorphisms of the sphere
which fix $0$, $1$, and the infinity. Let $\phi_{\mu}$ and
$\phi_{\nu}$ denote their restrictions to $S^{2} \setminus Q_{f}$ ,
respectively.  Since $\mu$ is equivalent to $\nu$,  we have a
holomorphic isomorphism
$$
h: {\Bbb P}^{1}  \setminus \phi_{\text{Ext}(\nu)}(Q_{f}) \to  {\Bbb
P}^{1}  \setminus \phi_{\text{Ext}(\mu)}(Q_{f})
$$
such that $\phi_{\mu}$ is isotopic to $h\circ \phi_{\nu}$ rel
$X_{f}$. Now define a homeomorphism $\text{Ext}(h): {\Bbb P}^{1} \to
{\Bbb P}^{1}$ by setting

\begin{equation}
\text{Ext}(h)(z) =
\begin{cases}
h(z) & \text{ for $z\in {\mathbb P}^{1}\setminus
\phi_{\text{Ext}(\nu)}(Q_{f})$}, \\
\phi_{\text{Ext}(\mu)}\circ \phi_{\text{Ext}(\nu)}^{-1}(z) & \text{
for otherwise}.
\end{cases}
\end{equation}

It is clear that $\text{Ext}(h)$ is holomorphic everywhere except
those points in $\phi_{\text{Ext}(\nu)}(X_{f})$. Since
$\phi_{\text{Ext}(\nu)}(X_{f})$ is the union of finitely many points
and finitely many quasi-circles(see Remark~\ref{analytic-boundary}),
it follows that $\hbox{Ext}(h)$ is a holomorphic homeomorphism of
the sphere to itself, and therefore a M\"{o}bius map. By the
normalization condition, $\text{Ext}(h)$ fixes $0$, $1$, and
$\infty$ also. So $\text{Ext}(h) = \hbox{id}$. This implies that
$\phi_{\mu}$ and $\phi_{\nu}$ are isotopic to each other rel
$X_{f}$, and in particular, $\phi_{\mu} = \phi_{\nu}$ on $X_{f}$.
Since $\phi_{\text{Ext}(\mu)}$ and $\phi_{\text{Ext}(\nu)}$ are
holomorphic on $D_{f}$, it follows that $\phi_{\text{Ext}(\mu)} =
\phi_{\text{Ext}(\nu)}$ on $Q_{f}$ and therefore are isotopic to
each other rel $Q_{f}$. Since $f(Q_{f}) \subset Q_{f}$, we can
therefore lift this isotopy and get a isotopy between
$\phi_{f^{*}(\text{Ext}(\mu))}$ and $\phi_{f^{*}(\text{Ext}(\nu))}$
rel $Q_{f}$. It follows that $\phi_{f^{*}(\mu)}$ and
$\phi_{f^{*}(\nu)}$, which are respectively the restrictions of
$\phi_{f^{*}(\text{Ext}(\mu))}$ and $\phi_{f^{*}(\text{Ext}(\nu))}$
on $S^{2} \setminus Q_{f}$,  are isotopic to each other rel $X_{f}$.
This implies that $[f^{*}(\mu)]= [f^{*}(\nu)]$. Let
$\sigma_{f}([\mu]) = [f^{*}(\mu)]$.

Now let us show that $\sigma_{f}$ is complex analytic. Suppose that
we have a curve $\tau(t)$ in $T_{f}$ such that $\tau(t)$ depends
complex analytically on $t$ when $t$ varies in a small disk $\{t\:
\big{|}\: |t| < \epsilon\}$.  We may assume that $\epsilon > 0$ is
small enough so that the following arguments are valid.  Let $[\mu]
= \tau(0)$.  Then the map $\phi_{\mu}$ induces an isometry between
$T_{f}$ and the Teichm\"{u}ller space modeled on $({\Bbb P}^{1}
\setminus \phi_{\mu}(Q_{f}), \phi_{\mu}(X_{f}))$. This isometry maps
the curve $\tau(t)$ to a complex  analytic curve $\theta(t)$, $|t| <
\epsilon$, which passes  through the origin. Since $\epsilon
> 0$ is small, by Ahlfors-Weill's formula(see Lemma 7, Chapter 5 of
\cite{Ga}),  there is a curve of Beltrami coefficients $\eta(t)$
defined on ${\Bbb P}^{1} \setminus \phi_{\mu}(Q_{f})$  such that
$[\eta(t)] = \theta(t)$ and $\eta(t)$ depends complex analytically
on $t$ when $t$ varies in the disk $\{t\: \big{|}\: |t| <
\epsilon\}$.  Using formula (\ref{pull-back-coefficients}), we can
pull back  $\eta(t)$ by $\phi_{\mu}$ and get a curve of Beltrami
coefficients $\gamma(t)$, $|t| < \epsilon$,  defined on $S^{2}
\setminus Q_{f}$. It follows that  $[\gamma(t)] = \tau(t)$ and
$\gamma(t)$ depends complex analytically on $t$ when $t$ varies in
the disk $\{t\: \big{|}\: |t| < \epsilon\}$. From
(\ref{pull-back-coefficients}),  it follows that
$\tilde{\gamma}(t)$, $|t| < \epsilon$,  is also a curve of complex
analytic Beltrami coefficients defined on $S^{2} \setminus Q_{f}$.
Now by Bers Embedding Theorem(see Theorem 1, Chapter 5 of
\cite{Ga}),  the curve $\sigma_{f}(\tau(t)) = [\tilde{\gamma}(t)]$
is a curve in $T_{f}$ which depends complex analytically on $t$ when
$t$ varies in the disk $\{t\: \big{|}\: |t| < \epsilon\}$. This
proves that $\sigma_{f}$ is a complex analytic operator.

\end{proof}

Once no confusion is caused,  let us simply use $\mu$ to denote
either $\text{Ext}(\mu)$ or $\mu$. Let $\tilde{\mu}(z) =
f^{*}(\mu)$.

Let $\phi_{\mu}, \phi_{\tilde{\mu}}: S^{2} \to {\Bbb P}^{1}$ denote
the quasiconformal homeomorphisms which fix $0$, $1$, and the
infinity and which solve the Beltrami equations given by $\mu$ and
$\tilde{\mu}$, respectively. Let  $$g = \phi_{\mu} \circ f \circ
\phi_{\tilde{\mu}}^{-1}.$$ It is clear that  $g$ is a rational map
and the following diagram commutes.
$$
\begin{array}{ccc}
      (S^{2}, Q_f)& {\buildrel
\phi_{\tilde{\mu}} \over \longrightarrow} &({\Bbb P}^{1}, \phi_{\tilde{\mu}}(Q_{f}))\\
            \downarrow f &                                            &\downarrow
            g\\
            (S^{2}, Q_{f})& {\buildrel \phi_{\mu} \over \longrightarrow} & ({\Bbb
           P}^{1},\phi_{\mu}(Q_{f}))
\end{array}
$$

Now suppose that $\xi$ is a tangent vector of $T_{f}$ at $\tau =
[\mu]$. This means that there is a smooth curve of Beltrami
coefficients $\gamma(t)$ defined on $S^{2} \setminus Q_{f}$, such
that $\gamma(0) = \mu$ and
\begin{equation}\label{tangent-vector}
\xi = \frac{d}{dt}\bigg{|}_{t=0}\mu_{\phi_{\gamma(t)} \circ
\phi_{\mu}^{-1}}
\end{equation}

Let $d \sigma_{f}\big{|}_{\tau}$ denote the tangent map of
$\sigma_{f}$ at $\tau$. Let $\tilde{\xi} = d
\sigma_{f}\big{|}_{\tau} (\xi)$.
\begin{lemma}\label{derivaltive-formula}
Let $\xi$ and $\tilde{\xi}$ be as above. Then
\begin{equation}\label{derivative}
\tilde{\xi}(w) = \xi(g(w)) \frac{\overline{g'(w)}}{g'(w)}.
\end{equation}
\end{lemma}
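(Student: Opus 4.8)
The plan is to compute the derivative of the pull-back operator by differentiating the defining formula for $f^*$ along an analytic disk of Beltrami coefficients and reading off the linear term. Because the operator $\sigma_f$ was shown in Lemma~\ref{pull-back} to be complex analytic, and because pulling back by the quasiconformal map $f$ respects analytic dependence on a parameter (the remark after formula~(\ref{pull-back-coefficients})), it suffices to work with a one-parameter family $\mu_t = \mu + t\,\xi_0 + o(t)$ of Beltrami coefficients on $S^2\setminus Q_f$ representing the tangent vector $\xi$ at $\tau=[\mu]$, and to track how the corresponding solutions of the Beltrami equation move. The standard move is to transport everything to the ${\Bbb P}^1$ pictures via $\phi_\mu$ and $\phi_{\tilde\mu}$, so that in the new coordinates $f$ becomes the genuine rational map $g=\phi_\mu\circ f\circ\phi_{\tilde\mu}^{-1}$, and the base point becomes the zero Beltrami coefficient. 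Then $\xi$ is identified with a Beltrami differential $\xi(z)\,d\bar z/dz$ on $\phi_\mu(S^2\setminus Q_f)$, and $\tilde\xi$ is the Beltrami differential on $\phi_{\tilde\mu}(S^2\setminus Q_f)$ obtained by differentiating $g^*$ of the family at $t=0$.

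First I would write down, for the holomorphic map $g$, the pull-back formula~(\ref{pull-back-coefficients}) with $f$ replaced by $g$: since $g$ is holomorphic we have $\mu_g\equiv 0$ and $\theta_g(w)=\overline{g'(w)}/g'(w)$, so that $(g^*\nu)(w)=\nu(g(w))\,\overline{g'(w)}/g'(w)$ exactly, not just to first order. Applying this to the family $\nu_t = t\,\xi(\cdot) + o(t)$ (the image family of $\mu_t$ under $\phi_\mu$, whose time-derivative at $0$ is precisely the tangent vector $\xi$ in the normalized picture) gives $(g^*\nu_t)(w) = t\,\xi(g(w))\,\overline{g'(w)}/g'(w) + o(t)$. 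Differentiating at $t=0$ yields the candidate formula $\tilde\xi(w)=\xi(g(w))\,\overline{g'(w)}/g'(w)$. The content of the lemma is then the assertion that this is the correct representative of the tangent vector $d\sigma_f|_\tau(\xi)$, i.e. that the operations ``differentiate'' and ``pull back / change coordinates'' commute in the relevant sense; this is legitimate precisely because all the maps involved depend complex analytically on $t$ and the Teichm\"uller projection is complex analytic (Bers embedding), so termwise differentiation of the analytic family $\tilde\gamma(t)$ constructed in the proof of Lemma~\ref{pull-back} is justified.

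To be careful about the chain rule one extra bookkeeping step is needed: the tangent vector to $T_f$ is only defined up to an infinitesimally trivial Beltrami differential, so I would note that both sides of~(\ref{derivative}) should be read as tangent vectors (equivalence classes in ${\rm M}_\mu/{\rm N}_\mu$), and that the assignment $\xi\mapsto \xi\circ g\cdot \overline{g'}/g'$ sends infinitesimally trivial differentials to infinitesimally trivial ones because $g$ is holomorphic and proper onto its image off the finitely many points and quasicircles; equivalently one pairs against integrable holomorphic quadratic differentials and uses the holomorphic change of variables $w\mapsto g(w)$, together with the fact that $g^*$ of an integrable holomorphic quadratic differential is again one (summed over the sheets). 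This makes the formula well-defined on the quotient. Finally one records that the Ahlfors–Weill representatives used in Lemma~\ref{pull-back} have derivative at $t=0$ equal to the given harmonic-type representative, so the $\tilde\gamma(t)$ there satisfies $\frac{d}{dt}|_{t=0}\tilde\gamma(t)(w) = \xi(g(w))\overline{g'(w)}/g'(w)$ as a Beltrami differential on $S^2\setminus Q_f$ in the $\phi_{\tilde\mu}$-coordinate, which is exactly~(\ref{derivative}).

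The main obstacle I expect is not the algebra — formula~(\ref{pull-back-coefficients}) collapses immediately for the holomorphic $g$ — but the justification that differentiation commutes with the pull-back construction \emph{through the Teichm\"uller projection}, i.e.\ that the naive first-order computation on representatives actually computes $d\sigma_f|_\tau$ rather than merely some lift of it. The clean way around this is to lean on the explicit analytic family $\tilde\gamma(t)$ already produced in the proof of Lemma~\ref{pull-back}: since $[\tilde\gamma(t)]=\sigma_f(\tau(t))$ and $\tilde\gamma(t)$ is analytic in $t$, the tangent vector $\tilde\xi$ is represented by $\partial_t\tilde\gamma(t)|_{t=0}$, and this last quantity is computed directly from~(\ref{pull-back-coefficients}) to be $\xi(g(w))\overline{g'(w)}/g'(w)$, where one uses that the Ahlfors–Weill section is chosen so that its $t$-derivative at the base point is the prescribed infinitesimal Beltrami differential $\xi$. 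I would state this as the one lemma-internal fact being invoked and then the proof is a two-line differentiation.
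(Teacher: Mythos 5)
Your core computation is exactly the paper's: write $\tilde\xi$ as $\frac{d}{dt}\big|_{t=0}\mu_{\phi_{\gamma(t)}\circ f\circ\phi_{\tilde\mu}^{-1}}$, factor $\phi_{\gamma(t)}\circ f\circ\phi_{\tilde\mu}^{-1}=(\phi_{\gamma(t)}\circ\phi_\mu^{-1})\circ g$, observe that formula~(\ref{pull-back-coefficients}) collapses for the holomorphic $g$ since $\mu_g\equiv 0$, and differentiate at $t=0$ using~(\ref{tangent-vector}). The paper's proof is thus just the two-line identity you flagged as the endpoint; the extra machinery you invoke (Ahlfors--Weill sections, commuting differentiation with the Teichm\"uller projection, checking that the formula respects ${\rm N}_\mu$) is not needed, because~(\ref{tangent-vector}) already fixes $\xi$ and $\tilde\xi$ as concrete Beltrami-differential representatives and the lemma is an identity between those representatives.
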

\begin{proof}
Note that
$$
\tilde{\xi} = \frac{d}{dt}\bigg{|}_{t=0}\mu_{\phi_{\gamma(t)} \circ
f\circ  \phi_{\tilde{\mu}}^{-1}} =
\frac{d}{dt}\bigg{|}_{t=0}\mu_{\phi_{\gamma(t)}\circ \phi_{\mu}^{-1}
\circ \phi_{\mu} \circ f\circ \phi_{\tilde{\mu}}^{-1}} =
\frac{d}{dt}\bigg{|}_{t=0}\mu_{\phi_{\gamma(t)}\circ \phi_{\mu}^{-1}
\circ g}.
$$
Since $g$ is a rational map, by (\ref{pull-back-coefficients}) we
have
$$
\mu_{\phi_{\gamma(t)}\circ \phi_{\mu}^{-1} \circ g} (w) =
\mu_{\phi_{\gamma(t)}\circ \phi_{\mu}^{-1}}(g(w))
\frac{\overline{g'(w)}}{g'(w)}
$$
The Lemma  then follows from (\ref{tangent-vector}).
\end{proof}

Let $\tilde{q}=\tilde{q}(w) dw^2$ be a non-zero integrable
holomorphic quadratic differential defined on ${\mathbb
P}^{1}\setminus \phi_{\tilde{\mu}}(Q_f)$.  Define
\begin{equation}\label{push-forward}
q(z)  = \sum_{g(w) = z} \frac{\tilde{q}(w)}{[g'(w)]^{2}}.
\end{equation}
It is easy to see that $q = q(z)dz^{2}$ is a holomorphic quadratic
differential defined on ${\mathbb P}^{1}\setminus \phi_{\mu}(Q_f)$.
\begin{proposition}\label{key}
$$ \int_{{\mathbb P}^{1}\setminus \phi_{\mu}(Q_f)}|q(z)|dz \wedge
d\overline{{z}} \le  \int_{{\mathbb P}^{1}\setminus
\phi_{\tilde{\mu}}(Q_f)}| \tilde{q}(w)|dw \wedge d\overline{w}
-\int_{\cup_{i} \phi_{\tilde{\mu}}(A_{i})}| \tilde{q} (w)|dw \wedge
d\overline{w}.
$$
\end{proposition}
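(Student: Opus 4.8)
The plan is to use the standard "push-forward inequality" for integrable holomorphic quadratic differentials under a rational (i.e. proper holomorphic) map, together with the extra room provided by the shielding rings. First I would set $g = \phi_\mu \circ f \circ \phi_{\tilde\mu}^{-1}$, which is a rational map of the same degree $d$ as $f$, and observe from the commuting diagram that $g$ restricts to a proper holomorphic map from $\mathbb{P}^1 \setminus \phi_{\tilde\mu}(Q_f)$ onto $\mathbb{P}^1 \setminus \phi_\mu(Q_f)$; indeed $f(Q_f) \subset Q_f$ and, more importantly, $f^{-1}(Q_f) \subset Q_f$ because every $f(A_i)$ lies in some $D_j$ forces the preimage of the disks-part of $Q_f$ to be contained in $Q_f$ together with the shielding rings. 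The key pointwise fact, for almost every $z \in \mathbb{P}^1\setminus\phi_\mu(Q_f)$, is the elementary triangle-inequality estimate
\[
|q(z)| = \Bigl| \sum_{g(w)=z} \frac{\tilde q(w)}{[g'(w)]^2} \Bigr| \le \sum_{g(w)=z} \frac{|\tilde q(w)|}{|g'(w)|^2},
\]
where the sum is over the $d$ preimages counted with multiplicity.

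Next I would integrate this over $\mathbb{P}^1\setminus\phi_\mu(Q_f)$ and apply the change-of-variables formula $w\mapsto z=g(w)$. Since $g$ is $d$-to-one and $|g'(w)|^2$ is exactly the Jacobian of $g$ (as a holomorphic map) in the area form $\tfrac{i}{2}\,dw\wedge d\bar w$, the change of variables converts $\int \sum_{g(w)=z} \frac{|\tilde q(w)|}{|g'(w)|^2}\,(\text{area in }z)$ into $\int_{g^{-1}(\mathbb{P}^1\setminus\phi_\mu(Q_f))} |\tilde q(w)|\,(\text{area in }w)$. Thus
\[
\int_{\mathbb{P}^1\setminus\phi_\mu(Q_f)} |q(z)|\,dz\wedge d\bar z \le \int_{g^{-1}(\mathbb{P}^1\setminus\phi_\mu(Q_f))} |\tilde q(w)|\,dw\wedge d\bar w.
\]
The only thing left is to identify the domain of the right-hand integral inside $\mathbb{P}^1\setminus\phi_{\tilde\mu}(Q_f)$ and show it misses the rings $\phi_{\tilde\mu}(A_i)$.

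Here I would argue that $g^{-1}\bigl(\mathbb{P}^1\setminus\phi_\mu(Q_f)\bigr)$ is contained in $\bigl(\mathbb{P}^1\setminus\phi_{\tilde\mu}(Q_f)\bigr) \setminus \bigcup_i \phi_{\tilde\mu}(A_i)$. Pulling back by the homeomorphism $\phi_{\tilde\mu}$, this reduces to the purely topological/dynamical statement $f^{-1}(S^2\setminus Q_f) \subset (S^2\setminus Q_f)\setminus \bigcup_i A_i$, equivalently $\bigcup_i A_i \cup Q_f \subset f^{-1}(Q_f)$. The containment $Q_f \subset f^{-1}(Q_f)$, i.e. $f(Q_f)\subset Q_f$, holds since $Q_f = P_1 \cup \overline{D_f}$ is forward-invariant up to the $P_f$-part (the points of $P_1$ map into $P_f$, and on $\overline{D_i}$ the map $f$ is holomorphic with $f(D_i)$ again a holomorphic disk by the construction in Lemma~\ref{srl}, hence landing in $D_f$); and the containment $\bigcup_i A_i \subset f^{-1}(Q_f)$ is exactly the fifth bullet of the Shielding Ring Lemma, which says each $f(A_i) \subset D_j \subset Q_f$. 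Therefore the $w$-integration region avoids the annuli $\phi_{\tilde\mu}(A_i)$, and subtracting that missing mass gives the asserted inequality:
\[
\int_{\mathbb{P}^1\setminus\phi_\mu(Q_f)} |q|\,dz\wedge d\bar z \le \int_{\mathbb{P}^1\setminus\phi_{\tilde\mu}(Q_f)} |\tilde q|\,dw\wedge d\bar w - \int_{\bigcup_i \phi_{\tilde\mu}(A_i)} |\tilde q|\,dw\wedge d\bar w.
\]

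I expect the main obstacle to be the careful bookkeeping around the change of variables and the branch points of $g$: one must justify that $|g'(w)|^{-2}$ is genuinely the right weight even where $g' $ vanishes (the finitely many critical points contribute measure zero, so this is harmless but should be stated), and that $q$ as defined by the push-forward is actually holomorphic and integrable on $\mathbb{P}^1\setminus\phi_\mu(Q_f)$ — which the excerpt already grants. A secondary subtlety is confirming the inclusion $f^{-1}(Q_f)\supset Q_f\cup\bigcup_i A_i$ as \emph{sets} (not merely up to homotopy), which is where the real-analytic/holomorphic structure of the disks $D_i$ and the precise construction in Lemma~\ref{srl} are used; once that inclusion is in hand, the inequality is a one-line consequence of the change-of-variables computation.
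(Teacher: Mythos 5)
Your proof follows the paper's argument exactly: triangle inequality on the push-forward formula, change of variables $w\mapsto z=g(w)$, and the set containment $Q_f\cup\bigcup_i A_i\subset f^{-1}(Q_f)$, which is precisely what the Shielding Ring Lemma gives. One small slip: in your opening paragraph you write ``$f^{-1}(Q_f)\subset Q_f$'' and describe $g$ as a proper self-map of $\mathbb{P}^1\setminus\phi_{\tilde\mu}(Q_f)$ onto $\mathbb{P}^1\setminus\phi_\mu(Q_f)$ --- both have the inclusion reversed (in fact $g^{-1}(\phi_\mu(Q_f))$ is strictly larger than $\phi_{\tilde\mu}(Q_f)$, and that extra mass over the rings is exactly where the inequality's gain comes from); but the argument you actually carry out two paragraphs later uses the correct inclusion $Q_f\cup\bigcup_i A_i\subset f^{-1}(Q_f)$, so this is only a misstatement, not a gap.
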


\begin{proof}
$$
\int_{{\mathbb P}^{1}\setminus \phi_{\mu}(Q_f)} |q(z)| dz \wedge
d\overline{z}=\int_{{\mathbb P}^{1}\setminus \phi_{\mu}(Q_f)} \Big|
\sum_{g(w)=z} \frac{\tilde{q}(w)}{[g' (w)]^{2}}\Big| dz \wedge
d\overline{z}
$$
$$
 \leq
\int_{\big{(}{\mathbb P}^{1}\setminus \phi_{\tilde{\mu}}(Q_f)\big{)}
\setminus \big( \cup_{i} \phi_{\tilde{\mu}}(A_{i})\big)}
|\tilde{q}(w)| dw \wedge d\overline{w}
$$
$$
=\int_{{\mathbb P}^{1}\setminus \phi_{\tilde{\mu}}(Q_f)}|
\tilde{q}(w)|dw \wedge d\overline{w} -\int_{\cup_{i}
\phi_{\tilde{\mu}}(A_{i})}| \tilde{q} (w)|dw \wedge d\overline{w}
$$
The first inequality comes from the fact $f(\cup A_{i}) \subset \cup
D_{i}$.   This completes the proof of Proposition~\ref{key}.
\end{proof}
\begin{proposition}\label{key'} $\int_{{\Bbb P}^{1} \setminus
\phi_{\tilde{\mu}}(Q_{f})} \tilde{\xi} (w) \tilde{q}(w)dw \wedge
d\overline{{w}} =\int_{{\Bbb P}^{1} \setminus  \phi_{\mu}(Q_{f})}
\xi (z) q(z)dz \wedge d\overline{{z}} $.
\end{proposition}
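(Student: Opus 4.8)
The plan is to carry out a change of variables in the integral on the left, pushing the computation upstairs via the rational map $g$. Writing $w$ for a point in ${\Bbb P}^{1}\setminus\phi_{\tilde\mu}(Q_f)$ and $z=g(w)$, I would start from the right-hand side and use the definition (\ref{push-forward}) of $q$ together with Lemma~\ref{derivaltive-formula}, which says $\tilde\xi(w)=\xi(g(w))\,\overline{g'(w)}/g'(w)$. The key local identity is that for each branch, $\tilde\xi(w)\tilde q(w)$ relates to $\xi(z)q(z)$ after accounting for the Jacobian of $g$. Concretely, since $q(z)=\sum_{g(w)=z}\tilde q(w)/[g'(w)]^2$, multiplying by $\xi(z)$ and integrating against the area form $dz\wedge d\overline z$, then substituting $z=g(w)$ on each sheet (whose Jacobian is $|g'(w)|^2$, i.e. $dz\wedge d\overline z = |g'(w)|^2\, dw\wedge d\overline w$ pulled back), one finds
$$
\int_{{\Bbb P}^{1}\setminus\phi_{\mu}(Q_f)}\xi(z)q(z)\,dz\wedge d\overline z
=\int\xi(g(w))\,\frac{\tilde q(w)}{[g'(w)]^2}\,|g'(w)|^2\,dw\wedge d\overline w
=\int\xi(g(w))\,\tilde q(w)\,\frac{\overline{g'(w)}}{g'(w)}\,dw\wedge d\overline w,
$$
and the last integrand is exactly $\tilde\xi(w)\tilde q(w)$ by Lemma~\ref{derivaltive-formula}.

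The one genuinely careful point is the bookkeeping of the domains of integration and the fact that $g$ is a branched cover of degree $d$. When I substitute $z=g(w)$, the preimage $g^{-1}(z)$ generically has $d$ points, and summing the $d$ local contributions is precisely what reproduces the sum in (\ref{push-forward}); so the "multi-valued" nature of the substitution is absorbed by the definition of $q$. I must check that $g$ maps ${\Bbb P}^{1}\setminus\phi_{\tilde\mu}(Q_f)$ into (indeed onto, counting multiplicity) ${\Bbb P}^{1}\setminus\phi_{\mu}(Q_f)$ — this follows because $f(Q_f)\subset Q_f$ and $f^{-1}(Q_f)\cap(S^2\setminus Q_f)=\emptyset$ is not quite right; rather $g^{-1}(\phi_\mu(Q_f))\supset\phi_{\tilde\mu}(Q_f)$, and the extra preimages of $\phi_\mu(Q_f)$ that lie outside $\phi_{\tilde\mu}(Q_f)$ form a set of measure zero, so they do not affect the integral. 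The branch points of $g$ are also a finite set and contribute nothing to the area integral.

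The main obstacle — really the only subtlety — is justifying the interchange of summation and integration and the absolute convergence needed to split $\int|q|$ in the proof of Proposition~\ref{key} and to manipulate $\int\xi q$ here; but since $\tilde q$ is integrable on ${\Bbb P}^{1}\setminus\phi_{\tilde\mu}(Q_f)$ and $\|\xi\|_\infty<\infty$, Fubini/Tonelli applies directly to the finite sum over sheets, and the pushforward $q$ is integrable by Proposition~\ref{key}. I would therefore present the argument as a one-line change of variables followed by an invocation of Lemma~\ref{derivaltive-formula}, remarking that the measure-zero sets (branch points of $g$, and preimages of $\phi_\mu(Q_f)$ outside $\phi_{\tilde\mu}(Q_f)$) are harmless. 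This completes the proof of Proposition~\ref{key'}.
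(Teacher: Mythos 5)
Your change-of-variables calculation is correct, and the overall route is the same as the paper's: use (\ref{push-forward}), substitute $z=g(w)$ with Jacobian $|g'(w)|^{2}$, and invoke Lemma~\ref{derivaltive-formula} to recognize $\tilde\xi(w)\tilde q(w)$ in the integrand. However, the step where you pass from the domain ${\Bbb P}^{1}\setminus g^{-1}(\phi_{\mu}(Q_{f}))$ (which is what the change of variables actually produces) back to ${\Bbb P}^{1}\setminus\phi_{\tilde\mu}(Q_{f})$ is justified incorrectly. You assert that $g^{-1}(\phi_{\mu}(Q_{f}))\setminus\phi_{\tilde\mu}(Q_{f})$ has measure zero, but this is false in general: $Q_{f}$ contains the closed holomorphic disks $\overline{D_{i}}$, and since $g$ is a rational map of degree $d\ge 2$, the full preimage $g^{-1}(\phi_{\mu}(\overline{D_{i}}))$ typically consists of several open components, only some of which are of the form $\phi_{\tilde\mu}(\overline{D_{j}})$; the remaining components are open sets of positive area (for instance, preimages through non-periodic critical points).

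The correct observation, which is what the paper uses, is not measure-theoretic but pointwise: on $g^{-1}(\phi_{\mu}(Q_{f}))\setminus\phi_{\tilde\mu}(Q_{f})$ one has $g(w)\in\phi_{\mu}(Q_{f})$, so $\xi(g(w))=0$ (since $\xi$, as a Beltrami differential on ${\Bbb P}^{1}\setminus\phi_{\mu}(Q_{f})$, vanishes on $\phi_{\mu}(Q_{f})$), and therefore by (\ref{derivative}) $\tilde\xi(w)=0$ there. Hence the integrand $\tilde\xi\,\tilde q$ vanishes identically on the extra set — regardless of its positive measure — and the two integrals agree. Replace the measure-zero claim with this vanishing argument and the proof is complete; the rest of your computation, including the treatment of branch points and the use of Fubini over the finite sum of sheets, is fine.
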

\begin{proof}

Note that $\phi_{\tilde{\mu}}(Q_{f}) \subset
g^{-1}(\phi_{\mu}(Q_{f}))$ and by (\ref{derivative}) $\tilde{\xi}(w)
= 0$ for all $w \in g^{-1}(\phi_{\mu}(Q_{f})) \setminus
\phi_{\tilde{\mu}}(Q_{f})$.  We thus have
$$
 \int_{{\Bbb P}^{1} \setminus \phi_{\tilde{\mu}}(Q_{f})}
\tilde{\xi} (w) \tilde{q}(w)dw \wedge d\overline{{w}} =\int_{{\Bbb
P}^{1} \setminus g^{-1}(\phi_{\mu}(Q_{f}))} \tilde{\xi} (w)
\tilde{q}(w)dw \wedge d\overline{{w}}.
$$

Now Proposition~\ref{key'} follows from (\ref{derivative}),
(\ref{push-forward}), and the fact that
$$
dw \wedge d\overline{{w}} = \frac{dz \wedge
d\overline{{z}}}{|g'(w)|^{2}}.
$$

\end{proof}

As a direct consequence of Propositions \ref{key} and \ref{key'}, we
have
\begin{corollary}\label{key-6}{\rm Let $\tau \in T_{f}$. Then
$\| d \sigma_{f} \big{|}_{\tau} \| \le 1$. }
\end{corollary}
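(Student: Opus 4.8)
The plan is to derive Corollary~\ref{key-6} directly from the two preceding propositions by unwinding the definition of the Teichm\"uller norm of the tangent map. Recall that $\|d\sigma_f|_\tau\|$ is, by Definition~\ref{Tech-norm}, the operator norm of the linear map sending a tangent vector $\xi$ at $\tau=[\mu]$ to $\tilde\xi=d\sigma_f|_\tau(\xi)$ at $\sigma_f(\tau)=[\tilde\mu]$; equivalently, it equals the supremum of $\|\tilde\xi\|/\|\xi\|$ over nonzero $\xi$. So it suffices to fix an arbitrary tangent vector $\xi$ and show $\|\tilde\xi\|\le\|\xi\|$.

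First I would fix a nonzero integrable holomorphic quadratic differential $\tilde q=\tilde q(w)\,dw^2$ on $\mathbb P^1\setminus\phi_{\tilde\mu}(Q_f)$ normalized so that $\int_{\mathbb P^1\setminus\phi_{\tilde\mu}(Q_f)}|\tilde q(w)|\,dw\wedge d\overline w=1$, and let $q=q(z)\,dz^2$ be the push-forward defined by (\ref{push-forward}), which is a holomorphic quadratic differential on $\mathbb P^1\setminus\phi_\mu(Q_f)$. By Proposition~\ref{key}, $\int_{\mathbb P^1\setminus\phi_\mu(Q_f)}|q(z)|\,dz\wedge d\overline z\le 1$ (the subtracted term involving the shielding rings $\phi_{\tilde\mu}(A_i)$ is nonnegative, so we may discard it to get this inequality). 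Next, by Proposition~\ref{key'}, the pairing is preserved:
\begin{equation*}
\int_{\mathbb P^1\setminus\phi_{\tilde\mu}(Q_f)}\tilde\xi(w)\tilde q(w)\,dw\wedge d\overline w=\int_{\mathbb P^1\setminus\phi_\mu(Q_f)}\xi(z)q(z)\,dz\wedge d\overline z.
\end{equation*}
Taking absolute values and applying the definition of $\|\xi\|$ to the right-hand side, together with the bound $\int|q|\le 1$ just obtained, gives
\begin{equation*}
\Bigl|\int_{\mathbb P^1\setminus\phi_{\tilde\mu}(Q_f)}\tilde\xi(w)\tilde q(w)\,dw\wedge d\overline w\Bigr|\le\|\xi\|\cdot\int_{\mathbb P^1\setminus\phi_\mu(Q_f)}|q(z)|\,dz\wedge d\overline z\le\|\xi\|.
\end{equation*}
Here I am using that $\|\xi\|=\sup|\int q\,\xi|$ over $q$ of unit $L^1$-norm, so $|\int q\,\xi|\le\|\xi\|\,\|q\|_{L^1}$ for every integrable holomorphic quadratic differential $q$.

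Finally, since $\tilde q$ was an arbitrary unit-norm integrable holomorphic quadratic differential on $\mathbb P^1\setminus\phi_{\tilde\mu}(Q_f)$, taking the supremum over all such $\tilde q$ and invoking Definition~\ref{Tech-norm} yields $\|\tilde\xi\|\le\|\xi\|$. As $\xi$ was arbitrary, $\|d\sigma_f|_\tau\|\le 1$, which is the assertion. I do not anticipate a serious obstacle: the only points requiring a small amount of care are (i) that the push-forward map $\tilde q\mapsto q$ really does land in $\mathrm A_\mu$ — integrability is exactly the content of Proposition~\ref{key}, and holomorphy is the remark following (\ref{push-forward}); and (ii) that when $q\equiv 0$ or has $L^1$-norm zero the inequality $|\int q\,\xi|\le\|\xi\|\,\|q\|_{L^1}$ is still trivially valid, so no normalization issue arises. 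One should also note that if the push-forward $q$ happens to vanish identically the bound $|\int\tilde\xi\tilde q|\le\|\xi\|\cdot 0=0$ is even stronger, so nothing is lost.
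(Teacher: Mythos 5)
Your proof is correct and is exactly the intended argument: apply Proposition~\ref{key'} to identify the pairings, discard the nonnegative shielding-ring term in Proposition~\ref{key} to get $\int|q|\le 1$, and then take the supremum over unit-norm $\tilde q$. The paper states the corollary as a direct consequence of those two propositions without writing out the details, and you have supplied precisely those details.
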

\begin{remark}\label{Ka}{\rm
Corollary~\ref{key-6} also follows from the general fact that a
complex analytic operator does not increase the Kabayashi's metric.
But this particular argument we used here  will be established in
the latter sections to prove a strict inequality(see
Corollary~\ref{key-3}). }
\end{remark}
The next lemma reduces the proof of the Main Theorem to showing that
the pull back operator $\sigma_{f}$ has a unique fixed point in
$T_{f}$.

\begin{lemma}~\label{fp}
The map $f$ is CLH-equivalent to a unique rational map (up to
M\"{o}bius conjugations) if and only if $\sigma_{f}$ has a unique
fixed point in $T_{f}$.
\end{lemma}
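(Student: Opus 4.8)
The plan is to exhibit a natural bijection between the fixed points of $\sigma_f$ in $T_f$ and the M\"obius conjugacy classes of rational maps that are CLH-equivalent to $f$; the ``unique $\leftrightarrow$ unique'' assertion then falls out of the bijection. The two halves of the bijection are obtained by running the commutative diagram of \S4 in opposite directions, exactly in the spirit of the proof of Lemma~\ref{pull-back}.

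\emph{From a fixed point to a rational map.} Suppose $\tau=[\mu]$ is fixed by $\sigma_f$, and put $\tilde\mu=f^{*}(\mathrm{Ext}(\mu))$, so that $[\tilde\mu|_{S^{2}\setminus Q_f}]=\sigma_f(\tau)=\tau$. Let $\phi=\phi_{\mathrm{Ext}(\mu)}$ and $\psi=\phi_{\mathrm{Ext}(\tilde\mu)}$ be the normalized solving maps. Then $g=\phi\circ f\circ\psi^{-1}$ is a rational map, since the Beltrami coefficient of $\phi\circ f$ is $f^{*}(\mathrm{Ext}(\mu))=\tilde\mu$. Because $\tau$ is fixed, $\mathrm{Ext}(\mu)$ and $\tilde\mu$ are Teichm\"uller equivalent, and the M\"obius-rigidity argument of Lemma~\ref{pull-back} — the holomorphic isomorphism matching the two restricted solving maps extends across the finitely many real-analytic circles $\partial D_f$ (Remark~\ref{analytic-boundary}) and the finitely many points of $P_1$ to a M\"obius map fixing $0,1,\infty$, hence equals the identity — shows that $\phi=\psi$ on $Q_f$ and that $\phi$ is isotopic to $\psi$ rel $Q_f$. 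Moreover $\mathrm{Ext}(\mu)$ and $\tilde\mu$ vanish on $D_f$ (here one uses that $f$ is holomorphic on each $\overline{D_i}\cup A_i$ and maps it into $Q_f$), so $\phi=\psi$ is holomorphic on $D_f\supset P_f'$. Taking $U_f=D_f$, the pair $(\phi,\psi)$ is a CLH-equivalence of $f$ with the rational map $g$.

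\emph{From a rational map to a fixed point.} Conversely, let $(\phi,\psi)$ be a CLH-equivalence of $f$ with a rational map $R$. First I would normalize so that $\phi$ — hence $\psi$, since they agree on $P_1\ni 0,1,\infty$ — fixes $0,1,\infty$, conjugating $R$ accordingly; and then arrange that $\phi=\psi$ is holomorphic on a neighborhood of $\overline{D_f}$ rather than merely on one of $P_f'$. This can be done by extending the germ of the local conjugacy $\phi$ at the attracting (resp.\ super-attracting) cycle $P_f'$ — a Koenigs (resp.\ B\"ottcher) coordinate composed with its counterpart for $R$ — along the inverse dynamics, which is legitimate because $f$ is holomorphic on the holomorphic disks and their shielding rings; equivalently, one checks that the construction of \S\S2--3 may be taken subordinate to any prescribed neighborhood of $P_f'$. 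Once this is arranged, $\phi=\psi$ on $X_f=P_1\cup\partial D_f$, the Beltrami coefficients $\mu_\phi,\mu_\psi$ of $\phi,\psi$ on $S^{2}$ vanish on $D_f$, and hence $\phi=\phi_{\mathrm{Ext}(\mu_\phi|_{S^{2}\setminus Q_f})}$, $\psi=\phi_{\mathrm{Ext}(\mu_\psi|_{S^{2}\setminus Q_f})}$. Taking Beltrami coefficients in $\phi\circ f=R\circ\psi$ gives $f^{*}(\mu_\phi)=\mu_\psi$, whence $\sigma_f\big([\mu_\phi|_{S^{2}\setminus Q_f}]\big)=[\mu_\psi|_{S^{2}\setminus Q_f}]$. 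It then remains to identify these two classes: $\phi$ and $\psi$ agree on $X_f$ and carry $D_f$ onto the same Jordan domains, and they are isotopic rel $\overline{P}_f$; an isotopy supported in the outer collars of $\partial D_f$, where $\phi=\psi$ is holomorphic and so the isotopy can be combed to fix $\partial D_f$, upgrades this to an isotopy rel $X_f$. That is exactly $[\mu_\phi|_{S^{2}\setminus Q_f}]=[\mu_\psi|_{S^{2}\setminus Q_f}]$ in $T_f$, and this common class is a fixed point of $\sigma_f$.

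\emph{Uniqueness and the main obstacle.} The two constructions are inverse to each other up to M\"obius conjugation: different representatives $\mu$ of a fixed point $\tau$ yield rational maps $g$ that are M\"obius conjugate, again by the rigidity argument, while from a CLH-equivalence $(\phi,\psi)$ with $R$ one recovers the fixed point $[\mu_\phi|_{S^{2}\setminus Q_f}]$, from which the first construction returns a map M\"obius conjugate to $R$. Hence fixed points of $\sigma_f$ correspond bijectively to M\"obius conjugacy classes of rational maps CLH-equivalent to $f$, and ``$\sigma_f$ has a unique fixed point'' is equivalent to ``$f$ is CLH-equivalent to a rational map, unique up to M\"obius conjugation.'' I expect the main obstacle to be the two book-keeping points forced by the fact that the marked set $X_f$ contains boundary circles and not only points: promoting ``holomorphic near $P_f'$'' to ``holomorphic near $\overline{D_f}$'' (so that the CLH-data genuinely pins the two maps down along $\partial D_f$), and, in the reverse step, replacing an isotopy rel $\overline{P}_f$ between maps that agree on $X_f$ and are holomorphic along $\partial D_f$ by an isotopy rel $X_f$. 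Everything else is parallel to the post-critically finite case and to the computations already carried out in \S4.
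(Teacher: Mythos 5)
Your proposal is correct and follows the paper's strategy: both directions run the commutative diagram of \S4, with the fixed-point-to-rational-map implication hinging on exactly the M\"obius-rigidity argument already carried out in Lemma~\ref{pull-back} (extending the conformal identification across $\partial D_f$ and $P_1$, normalizing at $0,1,\infty$). If anything you are more careful than the paper, whose backward direction simply asserts the existence of a suitable $\mu$ with $\phi_{\mathrm{Ext}(\mu)}$ isotopic to $\phi_{f^{*}(\mathrm{Ext}(\mu))}$ rel $Q_f$, without spelling out the two bookkeeping points you isolate (promoting holomorphicity from $U_f$ to $\overline{D}_f$, and upgrading the isotopy from rel $\overline{P}_f$ to rel $Q_f$).
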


\begin{proof}
If $\sigma_{f}$ has a fixed point $[\mu]$ in $T_{f}$, then
$\tilde{\mu} = f^{*}\mu\sim \mu$. Let $\hbox{Ext}(\mu)$ be the
extension of $\mu$ to $S^{2}$.  Let $\phi_{\text{Ext}(\mu)}$ and
$\phi_{f^{*}(\text{Ext}(\mu))}$ be the corresponding quasiconformal
homeomorphisms which fix $0$, $1$, and the infinity. Let
$\phi_{\mu}$ and $\phi_{\tilde{\mu}}$ be their restrictions to
$S^{2} \setminus Q_{f}$, respectively.  It follows that there is a
conformal isomorphism
$$
h: {\Bbb P}^{1}\setminus \phi_{\mu}( Q_{f}) \to {\Bbb
P}^{1}\setminus \phi_{\tilde{\mu}}( Q_{f})
$$
such that $\phi_{\tilde{\mu}}$ and $h\circ \phi_{\mu}$ are isotopic
to each other rel $X_{f}$. As in the proof of Lemma~\ref{pull-back},
one can show that such $h$ is actually equal to the identity map. In
fact,  we can again define a homeomorphism $\text{Ext}(h): {\Bbb
P}^{1} \to {\Bbb P}^{1}$ by setting

\begin{equation}
\text{Ext}(h)(z) =
\begin{cases}
h(z) & \text{ for $z\in {\mathbb P}^{1}\setminus
\phi_{\text{Ext}(\mu)}(Q_{f})$}, \\
\phi_{f^{*}(\text{Ext}(\mu))}\circ \phi_{\text{Ext}(\mu)}^{-1}(z) &
\text{ for otherwise}.
\end{cases}
\end{equation}
It is clear that $\text{Ext}(h)$ is holomorphic everywhere except
those points in $\phi_{\text{Ext}(\mu)}(X_{f})$. Since
$\phi_{\text{Ext}(\mu)}(X_{f})$ is the union of finitely many points
and finitely many quasi-circles(see Remark~\ref{analytic-boundary}),
it follows that $\hbox{Ext}(h)$ is a holomorphic homeomorphism of
the sphere to itself, and therefore a M\"{o}bius map. By the
normalization condition, $\text{Ext}(h)$ fixes $0$, $1$, and
$\infty$ also. So $\text{Ext}(h) = \hbox{id}$. This  implies that
$\phi_{\mu}$ and $\phi_{\tilde{\mu}}$ are isotopic to each other rel
$X_{f}$. It follows  that $\phi_{\text{Ext}(\mu)}$ and
$\phi_{f^{*}(\text{Ext}(\mu))}$ are isotopic to each other rel
$Q_{f}$. Note that when restricted to $D_{f}$,
$\phi_{\text{Ext}(\mu)}$ and $\phi_{f^{*}(\text{Ext}(\mu))}$ are
analytic and equal to each other. This implies that $f$ is
CLH-equivalent to the rational map $g= \phi_{\text{Ext}(\mu)}\circ
f\circ \phi_{f^{*}(\text{Ext}(\mu))}^{-1}$.

If $f$ is CLH-equivalent to $g$, then we have a Beltrami coefficient
$\mu$ defined on $S^{2} \setminus  Q_{f}$ such that $g=
\phi_{\text{Ext}(\mu)}\circ f\circ
\phi_{f^{*}(\text{Ext}(\mu))}^{-1}$ and moreover,
$\phi_{\text{Ext}(\mu)}$ and $\phi_{f^{*}(\text{Ext}(\mu))}$ are
isotopic to each other rel $Q_{f}$. This implies that $\phi_{\mu}$
and $\phi_{\tilde{\mu}}$ are isotopic to each other rel $X_{f}$. It
follows that  $[f^{*}(\mu)] = [\mu]$ and thus $\sigma_{f}([\mu]) =
[\mu]$.

It is clear that the fixed point $[\mu]$ is unique is equivalent to
say that $g$ is unique up to M\"{o}bius conjugations.
\end{proof}


\section{Bounded geometry}

Let $d(X, Y)$ denote  the spherical distance between two subsets of
the sphere. Recall that
$$
D_{f} =\cup_{i} D_{i}\:,\:P_{1} = P_f\setminus D_{f}, \: \hbox{and}
\: P_{f}' = \{a_{i}\}.
$$

\begin{definition}~\label{bounded geometry}{\rm
Let $b>0$ be a constant. Let $T_{f, b} \subset T_{f}$  be the
subspace such that for every $[\mu] \in T_{f, b}$, the following
conditions hold,
\begin{enumerate}
\item for all $z_{i}\not=z_{i'} \in P_{1}$,
$$
d(\phi_{\mu}(z_{i}), \phi_{\mu}(z_{i'}))\geq b;
$$
\item for all $z_{j}\in P_{1}$ and all $D_{i}$,
$$
d(\phi_{\mu}(z_{j}), \phi_{\mu}(D_{i}))\geq b;
$$
\item for all $D_{i}\not= D_{i'}$,
$$
d(\phi_{\mu}(D_{i}), \phi_{\mu}(D_{i'}))\geq b;
$$
\item for every $D_{i}$, $\phi_{\mu}(D_{i})$
contains a round disk of radius $b$ centered at $\phi_{\mu}(a_{i})$,
\end{enumerate}
where $\phi_{\mu}: S^{2} \to {\Bbb P}^{1}$ is the quasiconformal
homeomorphism which fixes $0$, $1$, and the infinity, and which
solves the Beltrami equation given by $\hbox{Ext}(\mu)$.}
\end{definition}

Let $K > 1$. Then   the family of all the $K-$quasiconformal
homeomorphisms of the sphere to itself, which fix $0$, $1$, and the
infinity, is compact. We thus have
\begin{lemma}\label{tech-2}
Let $K > 1$. Then for every $\delta > 0$, there is an $\epsilon > 0$
depending only on $K$ and $\delta$ such that for every two points
$x, y \in {\Bbb P}^{1}$ with $d(x, y)
> \delta$, and every
$K-$quasiconformal homeomorphism $\phi: {\Bbb P}^{1} \to {\Bbb
P}^{1}$ which fixes $0$, $1$, and the infinity, we have $d(\phi(x),
\phi(y)) > \epsilon$.
\end{lemma}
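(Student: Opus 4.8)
The plan is to argue by contradiction using the normal-family compactness of $K$-quasiconformal homeomorphisms that fix $0$, $1$, $\infty$. Suppose the statement fails for some $K>1$ and some $\delta>0$. Then for every $n$ there are points $x_{n},y_{n}\in{\Bbb P}^{1}$ with $d(x_{n},y_{n})>\delta$ and a $K$-quasiconformal homeomorphism $\phi_{n}:{\Bbb P}^{1}\to{\Bbb P}^{1}$ fixing $0,1,\infty$ with $d(\phi_{n}(x_{n}),\phi_{n}(y_{n}))\le 1/n$. By compactness of the sphere we may pass to a subsequence so that $x_{n}\to x$, $y_{n}\to y$; then $d(x,y)\ge\delta>0$, so $x\ne y$. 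By the compactness of the family of normalized $K$-quasiconformal maps (the remark preceding the lemma in the excerpt), we may pass to a further subsequence so that $\phi_{n}\to\phi$ uniformly on ${\Bbb P}^{1}$ in the spherical metric, where $\phi$ is again a $K$-quasiconformal homeomorphism fixing $0,1,\infty$ (in particular $\phi$ is injective).

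Next I would combine the convergences. Uniform convergence $\phi_{n}\to\phi$ together with $x_{n}\to x$ and $y_{n}\to y$ gives $\phi_{n}(x_{n})\to\phi(x)$ and $\phi_{n}(y_{n})\to\phi(y)$, hence
$$
d(\phi(x),\phi(y))=\lim_{n\to\infty} d(\phi_{n}(x_{n}),\phi_{n}(y_{n}))\le\lim_{n\to\infty}\frac1n=0 ,
$$
so $\phi(x)=\phi(y)$. But $\phi$ is a homeomorphism and $x\ne y$, a contradiction. Therefore the desired $\epsilon>0$ exists, and since the whole argument only used $K$ and $\delta$, the resulting $\epsilon$ depends only on $K$ and $\delta$.

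The only genuinely substantive ingredient is the compactness of the normalized $K$-quasiconformal family — both its sequential compactness (every sequence has a uniformly convergent subsequence) and, crucially, the closedness of the class (the limit is still a $K$-quasiconformal \emph{homeomorphism}, not merely a constant or a degenerate map); this is exactly where the normalization at $0,1,\infty$ is used, to rule out the limit collapsing. I would simply cite the standard reference (e.g. \cite{Ga}) for this fact, as the paper already does in the sentence introducing the lemma. The remaining steps are the routine extraction of subsequences and passage to the limit sketched above.
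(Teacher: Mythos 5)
Your argument is correct and follows exactly the route the paper intends: the paper simply states that the family of normalized $K$-quasiconformal self-homeomorphisms of the sphere is compact and then asserts the lemma, and your contradiction argument via subsequences is the standard way to unpack that implication. No gap; you have just made explicit what the paper leaves implicit.
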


By Definitions~\ref{Tech-metric} and \ref{bounded geometry}, and
Lemma~\ref{tech-2}, we have
\begin{lemma}\label{tech-6}
Let $b, D > 0$. Then there is a $b'> 0$ depending only on $b$ and
$D$ such that for any two Beltrami coefficients $\mu$ and $\nu$
defined on $S^{2} \setminus Q_{f}$, if $d_{T}([\mu], [\nu]) < D$ and
$\mu \in T_{f,b}$, then $\nu \in T_{f, b'}$.
\end{lemma}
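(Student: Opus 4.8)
The plan is to realize the change of marking from $[\mu]$ to $[\nu]$ by a single $K$-quasiconformal homeomorphism $\Phi$ of $\mathbb{P}^1$ with $K$ controlled by $D$, and then to transport the four bounded-geometry estimates from $\mu$ to $\nu$ via the compactness Lemma~\ref{tech-2}. Write $\phi_\mu=\phi_{\mathrm{Ext}(\mu)}$ and $\phi_\nu=\phi_{\mathrm{Ext}(\nu)}$ for the normalized solutions fixing $0$, $1$, $\infty$. Since $d_T([\mu],[\nu])<D$, the definition of the Teichm\"uller distance (Definition~\ref{Tech-metric}) yields Beltrami coefficients $\mu_1\sim\mu$ and $\nu_1\sim\nu$ on $S^2\setminus Q_f$ with $K\bigl(\phi_{\mathrm{Ext}(\nu_1)}\circ\phi_{\mathrm{Ext}(\mu_1)}^{-1}\bigr)<e^{2D}$; I would set $\Phi=\phi_{\mathrm{Ext}(\nu_1)}\circ\phi_{\mathrm{Ext}(\mu_1)}^{-1}$, a $K$-quasiconformal homeomorphism of $\mathbb{P}^1$ with $K<e^{2D}$ that fixes $0$, $1$, $\infty$ because all these normalized solutions do. Then I would invoke the rigidity argument already carried out in the proof of Lemma~\ref{pull-back} — equivalent Beltrami coefficients have normalized solutions that agree on $Q_f$ — to conclude $\phi_{\mathrm{Ext}(\mu_1)}=\phi_\mu$ and $\phi_{\mathrm{Ext}(\nu_1)}=\phi_\nu$ on $Q_f$, hence $\Phi\circ\phi_\mu=\phi_\nu$ on $Q_f$.

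Next I would apply Lemma~\ref{tech-2} with this $K$ and $\delta=b/2$ to obtain $\epsilon>0$ depending only on $D$ and $b$, so that whenever $x,y\in Q_f$ satisfy $d(\phi_\mu(x),\phi_\mu(y))\ge b$ one has $d(\phi_\nu(x),\phi_\nu(y))=d(\Phi(\phi_\mu(x)),\Phi(\phi_\mu(y)))>\epsilon$. Conditions (1)--(3) of Definition~\ref{bounded geometry} for $\nu$ then follow from those for $\mu$: (1) is the special case $x=z_i$, $y=z_{i'}\in P_1$; for (2) and (3) I would use that the spherical distance between two sets is the infimum of the distances between their points and that $\phi_\nu(D_i)=\Phi(\phi_\mu(D_i))$ since $D_i\subset Q_f$, so applying the estimate pointwise and taking infima gives $d(\phi_\nu(z_j),\phi_\nu(D_i))\ge\epsilon$ and $d(\phi_\nu(D_i),\phi_\nu(D_{i'}))\ge\epsilon$.

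For condition (4) I would fix $D_i$ and let $B$ be the round disk of spherical radius $b$ about $\phi_\mu(a_i)$, which lies in $\phi_\mu(D_i)$ by condition (4) for $\mu$; since $a_i\in D_i\subset Q_f$, $\phi_\nu(a_i)=\Phi(\phi_\mu(a_i))$. Every point of $\partial B$ is at spherical distance $b$ from $\phi_\mu(a_i)$, so by Lemma~\ref{tech-2} every point of the Jordan curve $\Phi(\partial B)$ is at distance $>\epsilon$ from $\phi_\nu(a_i)$. Because $\Phi$ is an orientation-preserving homeomorphism of the sphere, $\Phi(B)$ is the Jordan domain bounded by $\Phi(\partial B)$ containing $\phi_\nu(a_i)$; the spherical ball of radius $\epsilon$ about $\phi_\nu(a_i)$ is connected, disjoint from $\Phi(\partial B)$, and meets $\Phi(B)$, hence is contained in $\Phi(B)\subset\Phi(\phi_\mu(D_i))=\phi_\nu(D_i)$. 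This is condition (4) for $\nu$, and taking $b'=\epsilon$ finishes the proof. The main obstacle is the first paragraph: turning the bare inequality $d_T([\mu],[\nu])<D$ into an honest $K$-quasiconformal homeomorphism that agrees with $\phi_\nu\circ\phi_\mu^{-1}$ on $Q_f$, which is exactly where the rigidity argument of Lemma~\ref{pull-back} must be reused; once $\Phi$ is available, the rest is a routine bookkeeping of Lemma~\ref{tech-2} together with an elementary Jordan-curve argument on the sphere.
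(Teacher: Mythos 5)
Your proof is correct and is essentially the argument the paper has in mind; the paper states Lemma~\ref{tech-6} as an immediate consequence of Definitions~\ref{Tech-metric} and~\ref{bounded geometry} together with Lemma~\ref{tech-2}, and you have supplied precisely the details that make that terse citation rigorous. In particular, the rigidity step borrowed from the proof of Lemma~\ref{pull-back} (showing that the near-extremal quasiconformal homeomorphism $\Phi$, which a priori only realizes the distance between equivalence classes, in fact agrees with $\phi_\nu\circ\phi_\mu^{-1}$ on $\phi_\mu(Q_f)$) is the one point that genuinely requires care and that the paper glosses over, and your Jordan-curve argument is a clean way to push the round-disk estimate of condition~(4) across $\Phi$.
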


\begin{definition}\label{norm-of -curve}{\rm
Let $Z$ be a subset of $S^{2}$ with $\#(Z) \ge 4$.  Let $[\mu]\in
T_{f}$ and $\gamma \subset S^{2} \setminus Z$ be a simple closed and
non-peripheral curve.  We use $\|\gamma\|_{\mu, Z}$ to denote the
hyperbolic length  of the unique simple closed geodesic $\xi$ which
is homotopic to $\phi_{\mu}(\gamma)$ in the hyperbolic Riemann
surface $\Bbb{P}^{1} \setminus \phi_{\mu}(Z)$.   We say $\gamma$ is
a $(\mu, Z)$-simple closed geodesic if $\phi_{\mu}(\gamma)$ is a
simple closed geodesic in $\Bbb{P}^{1} \setminus \phi_{\mu}(Z)$.}
\end{definition}

For each holomorphic disk $D_{i}$, fix a point $b_{i}$ on the
boundary $\partial D_{i}$. Set
$$
E = P_{1}\cup \cup_{i}\{a_{i}, b_{i}\}.
$$

Note that  $P_{1}$ contains $0$, $1$, and the infinity by
assumption. Since $P_{1} \subset E$ and $\phi_{\mu}$ fixes $0$, $1$,
and the infinity, it follows that $E$ and $\phi_{\mu}(E)$ contain
$0$, $1$, and the infinity also.

\begin{lemma}~\label{gdb1}
Let $a > 0$. Then there is a  $b>0$ depending only on $a$ such that
for every Beltrami coefficient $\mu$ defined on $S^{2} \setminus
Q_{f}$ with $\mu(z) =  0$ on $\cup_{i} A_{i}$, if every $(\mu,
E)$-simple closed geodesic $\gamma \subset S^{2} \setminus Q_{f}$
has hyperbolic length not less than $a$, then $\mu\in T_{f,b}$.
\end{lemma}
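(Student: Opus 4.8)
\textbf{Proof proposal for Lemma~\ref{gdb1}.}

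The plan is to argue by contradiction, using the compactness of families of normalized $K$-quasiconformal maps together with the fact that the hypothesis on geodesic lengths controls the conformal geometry of $\Bbb{P}^{1}\setminus\phi_{\mu}(E)$. Suppose no such $b$ exists. Then there is a sequence of Beltrami coefficients $\mu_{n}$ on $S^{2}\setminus Q_{f}$, each vanishing on $\cup_{i}A_{i}$, such that every $(\mu_{n},E)$-simple closed geodesic in $S^{2}\setminus Q_{f}$ has hyperbolic length $\ge a$, but $\mu_{n}\notin T_{f,1/n}$; so for each $n$ one of the four conditions in Definition~\ref{bounded geometry} fails by a margin tending to $0$. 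Passing to a subsequence we may assume the same condition (say condition $j\in\{1,2,3,4\}$) fails for all $n$, and that the quantity in that condition tends to $0$.

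First I would pin down the key normalization device. Each $\mu_{n}$ is a Beltrami coefficient on $S^2$ (after extension by $0$ on $Q_f$, as in the definition of $\sigma_f$), but its dilatation is \emph{not} uniformly bounded, so we cannot directly invoke compactness of normalized quasiconformal maps. Instead I would compose: write $\phi_{\mu_{n}}=\psi_{n}\circ\phi_{n}$, where $\phi_{n}$ is a conformal uniformization-type map, or — more in the spirit of the paper — observe that the quantities in Definition~\ref{bounded geometry} are purely conformal invariants of the marked surface $(\Bbb{P}^{1}\setminus\phi_{\mu_{n}}(Q_{f}),\phi_{\mu_{n}}(X_{f}))$, hence depend only on the Teichm\"uller point $[\mu_{n}]$, not on the representative. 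The hypothesis says precisely that the hyperbolic surface $\Bbb{P}^{1}\setminus\phi_{\mu_{n}}(E)$ has no short closed geodesics (systole $\ge a$), while it is a sphere with a fixed finite number $N=\#E$ of punctures. By Mumford's compactness criterion for the moduli space $\mathcal{M}_{0,N}$, the sequence of these punctured spheres, together with the marked points $0,1,\infty$, lies in a compact part of moduli space; so after passing to a subsequence the normalized conformal models converge, and in particular the mutual spherical distances $d(\phi_{\mu_{n}}(x),\phi_{\mu_{n}}(y))$ for $x,y\in E$ stay bounded below. This already handles conditions $1$ and $2$ restricted to the points $a_i$, and the bulk of condition $3$ via the pair of marked points $a_i,b_i$ on each $\partial D_i$.

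The remaining work — which I expect to be the main obstacle — is to upgrade control of the \emph{marked points} $\{a_i,b_i\}$ to control of the whole image \emph{disk} $\phi_{\mu_{n}}(D_{i})$: its distance from other points/disks (conditions $2$,$3$) and the fact that it contains a round disk of definite radius around $\phi_{\mu_{n}}(a_{i})$ (condition $4$). Here the hypothesis $\mu_{n}=0$ on $\cup_{i}A_{i}$ is essential: $\phi_{\mu_{n}}$ is \emph{conformal} on each shielding annulus $A_{i}$, so $\phi_{\mu_{n}}(A_{i})$ is a genuine conformal annulus separating $\phi_{\mu_{n}}(\partial D_i)$ from $\phi_{\mu_{n}}(\text{outer boundary of }A_i)$, and its modulus equals the fixed modulus of $A_{i}$. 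The systole lower bound $a$ forces the core curve of $\phi_{\mu_{n}}(A_i)$ (which is non-peripheral in $\Bbb{P}^1\setminus\phi_{\mu_n}(E)$ by construction, separating $a_i$ from the rest of $E$) to have bounded hyperbolic length, hence the annulus cannot degenerate; combined with the Gr\"otzsch/Teichm\"uller extremal-length estimates this bounds the ratio of diameters across $\partial D_i$. Concretely: a conformal annulus of definite modulus shields $\phi_{\mu_n}(D_i)$, so (after the Koebe-type normalization coming from the three fixed points and the convergence in moduli space) $\phi_{\mu_n}(D_i)$ has diameter bounded above and below, sits at definite distance from everything else, and — since $a_i$ is a definite conformal distance from $\partial D_i$ inside $D_i\cup A_i$ with $\phi_{\mu_n}$ conformal on a neighborhood of $a_i$ — contains a round disk of definite radius about $\phi_{\mu_n}(a_i)$, by the standard distortion theorem for univalent functions. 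This contradicts the assumed failure of condition $j$ in the limit, and the contradiction proves the lemma; tracing the quantifiers, the resulting $b$ depends only on $a$ (through Mumford compactness and the fixed combinatorial data $\#E$ and the moduli of the $A_i$), as claimed.
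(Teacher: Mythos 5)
Your proposal is correct and rests on the same core ingredients as the paper's proof: the systole bound on $\Bbb{P}^{1}\setminus\phi_{\mu}(E)$ gives uniform separation of the finitely many normalized marked points, conformality of $\phi_{\mu}$ on $\overline{D_{i}}\cup A_{i}$ preserves $\operatorname{mod}(A_{i})$ so the shielding rings retain definite thickness, and Koebe distortion on $D_{i}\cup A_{i}$ yields the round disk in condition 4. The only cosmetic difference is that you package the first step as a contradiction/compactness argument (Mumford-type compactness of the thick part of $\mathcal{M}_{0,N}$, with three labeled points pinned at $0,1,\infty$), whereas the paper asserts the lower bound on mutual spherical distances directly; the underlying mechanism is the same, so this is not a genuinely different route.
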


\begin{proof}
Note that $\#(\phi_{\mu}(E)) = \#(E)$ is finite.  Since
$\phi_{\mu}(E)$ contains $0$, $1$, and the infinity,    it follows
that the spherical distance between any two points in
$\phi_{\mu}(E)$ has a positive lower bound which depends only on $a$
and $\#(E)$.   Since $\phi_{\mu}$ is holomorphic in every
topological disk $\overline{D_{i}} \cup A_{i}$ and since
$\phi_{\mu}(\overline{D}_{i})$ contains $\phi_{\mu}(a_{i})$ and
$\phi_{\mu}(b_{i})$,   it  follows from Koebe's distortion theorem
that  every  $\phi_{\mu}(D_{i})$ contains a round disk centered at
$\phi_{\mu}(a_{i})$, the radius of which has a positive lower bound
depending only on $a$.  Since $\{0, 1, \infty\} \notin
\phi_{\mu}(\overline{D_{i}}\cup A_{i})$, it follows that the
diameter of each component of ${\Bbb P}^{1} \setminus
\phi_{\mu}(A_{i})$ has a positive lower bound depending only on $a$.
Since $\phi_{\mu}$ is analytic on every $A_{i}$, we have
$$
\hbox{mod}(\phi_{\mu}(A_{i}))
=\hbox{mod}(A_{i}).
$$
It follows that every $\phi_{\mu}(A_{i})$ has  definite thickness
which depends only on $a$. All of these implies that there is a
constant $b > 0$ depending only on $a$ such that  the four
conditions in Definiton~\ref{bounded geometry} hold. The proof of
the lemma is completed.
\end{proof}
The next lemma is a direct consequence of Proposition 6.1 and
Theorem 6.3 of \cite{DH}.
\begin{lemma}\label{basic-1}
 Let $X$ be a hyperbolic Riemann surface and $\gamma \subset X$ be a
 simple closed geodesic with hyperbolic length $l$. Then there
 exists a topological annulus $A \subset X$ such that
 \begin{itemize}
 \item[1.] $\gamma$ is  the
 core curve of $A$,
 \item[2.] $\frac{\pi}{2l} - 1 < {\rm mod}(A) < \frac{\pi}{2l}$.
 \end{itemize}
 \end{lemma}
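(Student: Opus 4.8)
The plan is to pass to the cyclic cover of $X$ associated to $\gamma$, which is a hyperbolic cylinder of modulus $\pi/l$, and to take $A$ to be (the image in $X$ of) a suitable collar of $\gamma$ in that cylinder, computing its modulus by an explicit change of variables. A short case analysis on the size of $l$ then yields both inequalities.

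Write $X=\mathbb{D}/\Gamma$ with $\Gamma$ a torsion-free Fuchsian group, and let $g\in\Gamma$ be a primitive hyperbolic element in the conjugacy class determined by $\gamma$; its translation length is $l$. Put $Y=\mathbb{D}/\langle g\rangle$. Then $Y$ is a hyperbolic cylinder, the projection $p\colon Y\to X$ is a holomorphic covering, and $p$ carries the core geodesic $\widetilde\gamma$ of $Y$ bijectively and isometrically onto $\gamma$. Conjugating $g$ to $z\mapsto e^{l}z$ on the upper half plane and setting $w=\pi^{-1}\log z$ identifies $Y$ conformally with the flat cylinder $\{0<{\rm Im}\,w<1\}/(w\sim w+l/\pi)$, on which the hyperbolic metric is $\pi|dw|/\sin(\pi\,{\rm Im}\,w)$ and $\widetilde\gamma=\{{\rm Im}\,w=1/2\}$. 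Thus ${\rm mod}(Y)=\pi/l$, and for $w>0$ the collar $\mathcal C_{w}=\{x\in Y:\,d_{Y}(x,\widetilde\gamma)<w\}$ (here $d_{Y}$ is the hyperbolic distance in $Y$) is, in these coordinates, the sub-cylinder $\{\tfrac2\pi\arctan(e^{-w})<{\rm Im}\,w<1-\tfrac2\pi\arctan(e^{-w})\}$, so that
\[
{\rm mod}(\mathcal C_{w})=\frac{\pi}{l}\Bigl(1-\frac{4}{\pi}\arctan(e^{-w})\Bigr),
\]
an increasing function of $w$ running from $0$ to $\pi/l$, equal to $\pi/(2l)$ exactly at $w_{0}:=\log(1+\sqrt2)$ (for which $\sinh w_{0}=1$).

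Now invoke the collar lemma --- this is the input borrowed from \cite[Prop.~6.1, Thm.~6.3]{DH}: $p$ is injective on $\mathcal C_{w}$ whenever $\sinh w\le 1/\sinh(l/2)$, so, writing $w(l)$ for the value with $\sinh w(l)=1/\sinh(l/2)$, the set $p(\mathcal C_{w})$ is an embedded topological annulus in $X$ with core geodesic $\gamma$ for every $w\le w(l)$. If $l\le 2\log(1+\sqrt2)$, then $\sinh(l/2)\le 1$, hence $w(l)\ge w_{0}$; choosing $m$ in the nonempty interval $(\max\{0,\tfrac{\pi}{2l}-1\},\tfrac{\pi}{2l})$ and then $w<w_{0}$ with ${\rm mod}(\mathcal C_{w})=m$ (possible since $w\mapsto{\rm mod}(\mathcal C_{w})$ is continuous, increasing, and equals $\pi/(2l)$ at $w_{0}$), the annulus $A=p(\mathcal C_{w})$ satisfies $(1)$ and $(2)$. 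If $l> 2\log(1+\sqrt2)$, then $l>\pi/2$ so $\tfrac{\pi}{2l}-1<0$, while $w(l)<w_{0}$ forces $0<{\rm mod}(\mathcal C_{w(l)})<{\rm mod}(\mathcal C_{w_{0}})=\tfrac{\pi}{2l}$; hence $A=p(\mathcal C_{w(l)})$ already works. In either case $A$ is as required.

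The one step that is not a routine computation is the collar lemma itself, i.e. the assertion that $p$ restricted to $\mathcal C_{w(l)}$ is injective --- equivalently, that a lift of $\mathcal C_{w(l)}$ to $\mathbb{D}$ meets none of its $\Gamma$-translates outside $\langle g\rangle$. This is where one must use that $\gamma$ is simple, and it is exactly the content cited from \cite{DH}; granting it, the rest is the elementary change of variables above.
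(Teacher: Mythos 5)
Your proposal is correct. The paper supplies no proof for this lemma, simply citing Proposition~6.1 and Theorem~6.3 of \cite{DH}; your argument is a faithful reconstruction of exactly that content: conformally identify the intermediate cyclic cover with a flat cylinder, compute $\operatorname{mod}(\mathcal C_\omega)=\tfrac{\pi}{l}\bigl(1-\tfrac{4}{\pi}\arctan(e^{-\omega})\bigr)$ by the change of variables $w=\pi^{-1}\log z$, and then invoke the collar embedding criterion $\sinh\omega\cdot\sinh(l/2)\le 1$ (the point where simplicity of $\gamma$ enters). Your observation that one should \emph{not} simply take the maximal embedded collar --- since for small $l$ its modulus is near $\pi/l$, which overshoots the required upper bound $\pi/(2l)$ --- but instead trim to a sub-collar whose modulus is deliberately placed in $\bigl(\max\{0,\tfrac{\pi}{2l}-1\},\tfrac{\pi}{2l}\bigr)$, together with the dichotomy at $l=2\log(1+\sqrt2)>\pi/2$ which makes the lower bound vacuous in the complementary case, cleanly closes both inequalities. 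One minor stylistic point: you reuse the letter $w$ both for the cylinder coordinate and for the collar width; distinguishing them (say $\omega$ for the width) would prevent confusion, but the mathematics is unaffected.
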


From the modulus inequality of Teichm\"{u}ller extremal problem(For
instance,  see Chapter III of \cite{A}), we have
\begin{lemma}\label{basic-2}
Let  $T \in {\Bbb P}^{1} \setminus \{0, 1, \infty\}$. Let $H \subset
{\Bbb P}^{1}$ be an annulus   which separates $\{0, 1\}$ and $\{T,
\infty\}$.  Then
$$
 {\rm mod}(H) \le
\frac{1}{2 \pi } \log 16(|T| +1).
$$
\end{lemma}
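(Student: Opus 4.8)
The plan is to reduce the inequality to the extremal ring domain for this separation problem, namely the Teichm\"uller ring, and then to invoke the classical explicit upper bound for its modulus. For $s>0$ let $R_{T}(s)={\Bbb C}\setminus\big([-1,0]\cup[s,+\infty)\big)$ denote the Teichm\"uller ring and put $m_{T}(s)={\rm mod}(R_{T}(s))$.

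First I would identify the roles of the four distinguished points. Since $H$ separates $\{0,1\}$ from $\{T,\infty\}$, regard $0$ and $\infty$ as the two ``base points'', with $1$ paired to $0$ and $T$ paired to $\infty$. The modulus inequality for the Teichm\"uller extremal problem (Chapter III of \cite{A}) asserts that among all ring domains separating $\{0,z_{1}\}$ from $\{z_{2},\infty\}$ the modulus is largest for the Teichm\"uller ring obtained by rotating $z_{1}$, $z_{2}$ onto the negative, respectively positive, real axis; in particular only the ratio $|z_{2}|/|z_{1}|$ of the distances to the origin enters, so taking $z_{1}=1$ and $z_{2}=T$ gives ${\rm mod}(H)\le m_{T}(|T|)$.

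It then remains to estimate $m_{T}(s)$. Here I would use the standard identity expressing the Teichm\"uller ring as a double of the Gr\"otzsch ring $R_{G}(r)={\Bbb D}\setminus[0,r]$, namely $m_{T}(s)=2\,{\rm mod}(R_{G}(1/\sqrt{1+s}))$, together with the elementary estimate $2\pi\,{\rm mod}(R_{G}(r))<\log(4/r)$ valid for $0<r<1$ (both again found in Chapter III of \cite{A}; the latter is a length-area estimate using the conformal metric $|dz|/|z|$ after a normalization). Combining these,
$$
2\pi\, m_{T}(s)\;=\;4\pi\,{\rm mod}(R_{G}(1/\sqrt{1+s}))\;<\;2\log(4\sqrt{1+s})\;=\;\log(16(1+s)),
$$
and substituting $s=|T|$ yields ${\rm mod}(H)<\frac{1}{2\pi}\log 16(|T|+1)$, slightly stronger than what is asserted.

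The only genuinely delicate point is the first step: one must invoke the Teichm\"uller inequality in exactly the form where the parameter of the extremal ring is the ratio of the distances of the two ``free'' points to the origin, and verify that the natural pairing here ($1$ with $0$, $T$ with $\infty$) is the one producing the parameter $|T|$ (rather than, say, $|T-1|$, which would come from a different pairing). Everything after that is a routine combination of the classical Gr\"otzsch-Teichm\"uller modulus estimates with elementary algebra, and I would not anticipate any further obstacle.
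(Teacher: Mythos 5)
Your proposal is correct and takes the route the paper implicitly intends: the paper only cites the Teichm\"uller extremal problem from Chapter III of Ahlfors without giving details, and you have supplied precisely the standard chain of facts one would find there --- reduction to the Teichm\"uller ring modulus $m_{T}(|T|)$ via the extremal theorem, the functional equation $m_{T}(s)=2\,{\rm mod}(R_{G}(1/\sqrt{1+s}))$, and the Gr\"otzsch bound ${\rm mod}(R_{G}(r))<\frac{1}{2\pi}\log(4/r)$. The pairing you flag as delicate is in fact forced by the hypothesis (the inner continuum contains $0$ and $1$, the outer contains $T$ and $\infty$, so after normalizing the inner pair to distance $1$ the parameter is $|T|/1=|T|$), and your computation even yields the slightly stronger strict inequality.
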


\begin{lemma}~\label{collar}
There exists an $\eta > 0$  such that for any Beltrami coefficient
$\mu$ defined on $S^{2}\setminus Q_{f}$ with $\mu(z) = 0$ on
$\cup_{i} A_{i}$ and any $(\mu, E)$-simple closed geodesic $\gamma
\subset S^{2}\setminus E$ with $\|\gamma\|_{\mu, E} < \eta$, we have
$\gamma \subset  S^{2} \setminus Q_{f}$. Moreover, for any $\epsilon
> 0$, there is a $\delta > 0$ such that
\begin{equation}\label{geodesic-comp}
\|\gamma\|_{\mu, E} > (1 - \epsilon) \|\gamma\|_{\mu, Q_{f}}
\end{equation}
provided that $\|\gamma\|_{\mu, E} < \delta$.
\end{lemma}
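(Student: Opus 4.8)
The plan is to compare the hyperbolic metrics of the two Riemann surfaces $S^2\setminus\phi_\mu(E)$ and $S^2\setminus\phi_\mu(Q_f)$ on the region between their inclusion. Note that $E\subset Q_f$ (each $a_i$ and $b_i$ lies in $\overline{D_i}$), so $S^2\setminus\phi_\mu(Q_f)\subset S^2\setminus\phi_\mu(E)$, and by Schwarz--Pick the density of the hyperbolic metric of the smaller surface dominates that of the larger; hence for any curve contained in $S^2\setminus\phi_\mu(Q_f)$ we automatically have $\|\gamma\|_{\mu,E}\le\|\gamma\|_{\mu,Q_f}$. So the content of \eqref{geodesic-comp} is the reverse estimate, plus the a priori claim that a short $(\mu,E)$-geodesic cannot enter $Q_f$ at all.

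First I would establish the localization claim. Suppose $\gamma$ is a $(\mu,E)$-simple closed geodesic with $\|\gamma\|_{\mu,E}<\eta$. By Lemma~\ref{basic-1} there is an embedded annulus $A$ in $S^2\setminus\phi_\mu(E)$ with core $\phi_\mu(\gamma)$ and $\mathrm{mod}(A)>\frac{\pi}{2\eta}-1$, which is large if $\eta$ is small. Now I claim $A$ (hence $\phi_\mu(\gamma)$) must avoid each $\phi_\mu(D_i)$. Indeed $\phi_\mu$ is conformal on $\overline{D_i}\cup A_i$ since $\mu=0$ there, so $\phi_\mu(A_i)$ is a genuine round-modulus annulus of modulus $\mathrm{mod}(A_i)$ shielding $\phi_\mu(D_i)$ from the rest of $\phi_\mu(E)$; if $A$ had its core curve passing through $\phi_\mu(\overline{D_i})$, then since $\phi_\mu(D_i)$ contains the marked points $\phi_\mu(a_i),\phi_\mu(b_i)\in\phi_\mu(E)$ and the complement $S^2\setminus\phi_\mu(D_i\cup A_i)$ contains the at least two marked points of $\phi_\mu(P_1\setminus\{$nearby$\})$ (here we use $\#(P_1)\ge 3$ and $\{0,1,\infty\}\subset P_1$), the curve $\phi_\mu(\gamma)$ would be forced to cross the fixed-modulus annulus $\phi_\mu(A_i)$, giving $\mathrm{mod}(A)\le\mathrm{mod}(\phi_\mu(A_i))=\mathrm{mod}(A_i)$, contradicting the largeness of $\mathrm{mod}(A)$ once $\eta$ is small enough that $\frac{\pi}{2\eta}-1>\max_i\mathrm{mod}(A_i)$. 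Therefore $\phi_\mu(\gamma)\subset S^2\setminus\phi_\mu(Q_f)$, i.e. $\gamma\subset S^2\setminus Q_f$.

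For the quantitative comparison \eqref{geodesic-comp}, I would run essentially the same modulus argument in reverse. Given $\epsilon>0$, choose $\delta\le\eta$ small. If $\|\gamma\|_{\mu,E}<\delta$, the annulus $A$ from Lemma~\ref{basic-1} lies in $S^2\setminus\phi_\mu(E)$, has large modulus $M:=\mathrm{mod}(A)>\frac{\pi}{2\delta}-1$, and by the first part its core avoids $\phi_\mu(Q_f)$. The subannulus $A'\subset A$ obtained by deleting any part of $A$ meeting $\phi_\mu(Q_f)$ and keeping the component of $A\setminus\phi_\mu(Q_f)$ containing the core is an annulus in $S^2\setminus\phi_\mu(Q_f)$ with the same core curve; I must show $\mathrm{mod}(A')\ge M-C$ for a constant $C$ independent of $\mu$. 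This is where the shielding structure is essential: $\phi_\mu(Q_f)=\phi_\mu(P_1)\cup\bigcup_i\phi_\mu(\overline{D_i})$, each $\phi_\mu(\overline{D_i})$ is protected by the fixed-modulus round annulus $\phi_\mu(A_i)$, and the finitely many points $\phi_\mu(P_1)\subset\phi_\mu(E)$ were already removed; so $A$ can meet $\phi_\mu(Q_f)$ only inside the $\phi_\mu(\overline{D_i})$, and each time it does so it must also cross the collar $\phi_\mu(A_i)$, which eats at least $\mathrm{mod}(A_i)$ of modulus. Cutting out these finitely many crossed collars (there are at most $\#\{D_i\}$ of them, each of a priori bounded modulus from Lemma~\ref{basic-2} applied in normalized coordinates) loses a total modulus $C=\sum_i\mathrm{mod}(A_i)+O(1)$, uniform in $\mu$. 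Hence $A'$ is an annulus in $S^2\setminus\phi_\mu(Q_f)$ with core $\phi_\mu(\gamma)$ and $\mathrm{mod}(A')\ge M-C$, so by the elementary estimate $\|\gamma\|_{\mu,Q_f}\le\frac{\pi}{2\,\mathrm{mod}(A')}\le\frac{\pi}{2(M-C)}$ while Lemma~\ref{basic-1} gives $\|\gamma\|_{\mu,E}>\frac{\pi}{2(M+1)}$; combining, $\|\gamma\|_{\mu,E}/\|\gamma\|_{\mu,Q_f}\ge\frac{M-C}{M+1}\to 1$ as $M\to\infty$, i.e. as $\delta\to 0$. Choosing $\delta$ so that $\frac{M-C}{M+1}>1-\epsilon$ finishes the proof.

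The main obstacle is the uniformity (in $\mu$) of the modulus loss $C$ in the last step: one has to know that the collars $\phi_\mu(A_i)$ really do block every return of $A$ into $\phi_\mu(Q_f)$ and that there is a uniform bound on how much modulus can be trapped inside each $\phi_\mu(\overline{D_i}\cup A_i)$. Both facts come down to the conformality of $\phi_\mu$ on $\overline{D_i}\cup A_i$ (so $\mathrm{mod}(\phi_\mu(A_i))=\mathrm{mod}(A_i)$ is literally $\mu$-independent) together with Lemma~\ref{basic-2}, which caps the modulus of any annulus separating the marked points $\{0,1\}$ from $\{\infty,\cdot\}$ in terms of spherical data that the normalization $\phi_\mu(0)=0,\phi_\mu(1)=1,\phi_\mu(\infty)=\infty$ controls; I would need to record that combinatorial/topological bookkeeping carefully, but each individual estimate is routine.
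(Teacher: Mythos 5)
The heart of your proposed proof of the localization claim is the asserted inequality ``$\phi_\mu(\gamma)$ crosses $\phi_\mu(A_i)$, hence $\mathrm{mod}(A)\le\mathrm{mod}(\phi_\mu(A_i))$,'' and this step is not correct. The fact that the core curve of an annulus $A$ intersects both complementary components of another annulus $B$ does not bound $\mathrm{mod}(A)$ by $\mathrm{mod}(B)$: there is no overflowing or Gr\"otzsch-type inequality of that form, and indeed the shielding rings $A_i$ are fixed, typically thin annuli while $A$ is becoming arbitrarily fat as $\eta\to 0$, so the strategy of capping $\mathrm{mod}(A)$ by $\mathrm{mod}(A_i)$ points in the wrong direction. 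What the small, $\mu$-independent modulus of $\phi_\mu(A_i)$ actually does (this is how the paper uses it) is provide a \emph{definite-thickness buffer} around each $\phi_\mu(\overline{D_i})$, not a modulus cap.

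The paper's proof does something structurally different which avoids this issue. From the collar annulus $A$ of Lemma~\ref{basic-1} it first extracts, via Lemma~\ref{basic-2} and a normalization putting $0$ and $\infty$ on opposite sides of $A$, a \emph{round} sub-annulus $H=\{r<|z|<R\}\subset A$ with $\mathrm{mod}(H)\ge\mathrm{mod}(A)-C$. Then, using that $\phi_\mu$ is conformal on $\overline{D_i}\cup A_i$, Koebe distortion produces a universal $M$ with every $\phi_\mu(\overline{D_i})$ contained in $\{|z|<Mr\}$ or $\{|z|>R/M\}$, so the shrunk round annulus $H_M=\{Mr<|z|<R/M\}$ already avoids $\phi_\mu(Q_f)$. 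The localization of $\phi_\mu(\gamma)$ inside $H_M$ is then proved by a different modulus argument: $\phi_\mu(\gamma)$, being the core geodesic, divides the collar $A$ into two sub-annuli each of modulus exactly $\mathrm{mod}(A)/2$; if $\phi_\mu(\gamma)$ escaped $H_M$, one of these sub-annuli would separate a pair of points whose modulus-separating capacity is bounded by a constant depending only on $M$ (Lemma~\ref{basic-2}), contradicting the fact that $\mathrm{mod}(A)/2\to\infty$. The quantitative comparison then falls out immediately from $\mathrm{mod}(H_M)=\pi/(2l)$ with $l>\|\gamma\|_{\mu,Q_f}$ together with $\mathrm{mod}(H_M)\ge\pi/(2\|\gamma\|_{\mu,E})-C'$.

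Your second step has an analogous problem: you define $A'$ as the component of $A\setminus\phi_\mu(Q_f)$ containing the core and assert $\mathrm{mod}(A')\ge\mathrm{mod}(A)-\sum_i\mathrm{mod}(A_i)-O(1)$ by ``cutting out crossed collars.'' It is not clear that this component is even an annulus with core $\phi_\mu(\gamma)$, and there is no modulus-monotonicity principle that charges each incursion of $A$ into $\phi_\mu(\overline{D_i})$ a modulus cost of $\mathrm{mod}(A_i)$; that bookkeeping is where the proof actually lives, and it is precisely what the explicit round annulus $H_M$ and the even-split observation accomplish in the paper. I would suggest replacing both of your modulus inequalities by the paper's $H\to H_M$ mechanism, with the ``$\phi_\mu(\gamma)$ splits $A$ into two halves of modulus $\mathrm{mod}(A)/2$'' argument doing the final localization.
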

\begin{proof}
Let $\gamma \subset S^{2}\setminus E$ be a $(\mu, E)$-simple closed
geodesic. By Lemma~\ref{basic-1}, there is an annulus $A \subset
{\Bbb P}^{1}
 \setminus \phi_{\mu}(E)$ such that $\phi_{\mu}(\gamma)$ is the core curve of $A$
 and
\begin{equation}\label{ine-1}
\frac{\pi}{2\|\gamma\|_{\mu, E}} - 1 < {\rm mod}(A) <
\frac{\pi}{2\|\gamma\|_{\mu, E}}.
\end{equation}
 We may assume that $A$ separates $0$ and the infinity.
 Let $K_{1}$ and $K_{2}$ be the two components of ${\Bbb P}^{1}
 \setminus A$ such that $0 \in K_{1}$ and $\infty \in K_{2}$.  Let
 $$
 r = \max\{|z|\: \big{|}\: z \in  K_{1}\} \quad
 \hbox{and} \quad  R = \min\{ |z|\:\big{|}\: z \in
 K_{2}\}.
 $$
 By Lemma~\ref{basic-2}, when $\|\gamma\|_{\mu, E}$ is small, $R/r$
 is large. Consider the round annulus
 $$
 H = \{z\:\big{|}\: r < |z| < R\}.
 $$
It follows that  $H \subset A$ and that the core curve of $H$ is in
the same homotopic class as $\gamma$.  By  Lemma~\ref{basic-2} and
(\ref{ine-1}), it follows that there is a uniform constant $0< C <
\infty$ such that
\begin{equation}\label{mod-ine-2}
{\rm  mod}(H) \ge {\rm mod}(A) - C
\end{equation}
holds provided that $\|\gamma\|_{\mu, E}$ is small.  Note that every
pair $\{\phi_{\mu}(a_{i}), \phi_{\mu}(b_{i})\}$ is contained either
in $\{z\:\big{|}\: |z| < r\}$ or in $\{z\:\big{|}\: |z| > R\}$.
Since $\phi_{\mu}$
 is holomorphic in $\overline{D_{i}} \cup A_{i}$ and $\{\phi_{\mu}(a_{i}),
 \phi_{\mu}(b_{i})\} \subset \overline{D_{i}}$, it follows from Koebe's distortion theorem
that there is an $1 < M < \infty$, which depends only on $\{D_{i}\}$
and $\{A_{i}\}$,  such that every $\phi_{\mu}(\overline{D_{i}})$ is
contained either in $\{z\:\big{|}\: |z| < Mr\}$ or in
$\{z\:\big{|}\: |z| > R/M\}$. By (~\ref{ine-1}) and
(\ref{mod-ine-2}), we have $$ R/M
> Mr$$ provided that $\|\gamma\|_{\mu, E}$ is small enough. All of these implies  that the annulus
$$
H_{M} = \{z\:\big{|}\: Mr < |z| < R/M\}
$$
is contained in ${\Bbb P}^{1} \setminus \phi_{\mu}(Q_{f})$ provided
that $\|\gamma\|_{\mu, E}$ is small enough.

Now the first assertion of the lemma follows if we can show that
$$\phi_{\mu}(\gamma) \subset H_{M}$$ provided that $\|\gamma\|_{\mu,
E}$ is small enough.  Suppose this were not true. Then there are two
cases. In the first case, there exist two points $z$ and $z'$ such
that
\begin{itemize}
\item[1.] $z \in K_{2}$ with $|z| = R$,
\item[2.] $|z'| = R/M$,
\item[3.] $\phi_{\mu}(\gamma)$ separates $\{0, z'\}$ and $\{z, \infty\}$.
\end{itemize}
In the second case,  there exist two points $z$ and $z'$ such that
\begin{itemize}
\item[1.] $|z| = Mr$,
\item[2.] $z' \in K_{1}$ and $|z'| = r$.
\item[3.] $\phi_{\mu}(\gamma)$ separates $\{0, z'\}$ and $\{z, \infty\}$.
\end{itemize}
Suppose we are in the first case.   Note that the curve
$\phi_{\mu}(\gamma)$ separates $A$ into two sub-annuli such that the
modulus of each of them is equal to ${\rm mod}(A)/2$. But on the
other hand,  the outer one separates $\{0, z'\}$ and $\{z,
\infty]\}$, and thus by Lemma~\ref{basic-2}, its modulus has an
upper bound depending only on $M$. By (\ref{ine-1}) this is
impossible when $\|\gamma\|_{\mu, E}$ is small enough. The same
argument can be used to get a contradiction in the second case. This
proves the first assertion of the Lemma.

Now  let us prove the second assertion.  Let $l$ denote the
hyperbolic length of the core curve of $H_{M}$ with respect to the
hyperbolic metric of $H_{M}$. Since $H_{M} \subset {\Bbb P}^{1}
\setminus \phi_{\mu}(Q_{f})$ when $\|\gamma\|_{\mu, E}$ is small
enough, it follows that $l > \|\gamma\|_{\mu, Q_{f}}$. Thus we have
$$
{\rm mod}(H_{M}) = \frac{\pi}{2l}  < \frac{\pi}{2 \|\gamma\|_{\mu,
Q_{f}}}.
$$
From  (~\ref{ine-1}) and (\ref{mod-ine-2}), there is a constant $0<
C' < \infty$ such that
$${\rm mod}(H_{M}) \ge \frac{\pi}{2 \|\gamma\|_{\mu, E}} - C'
$$
holds provided that $\|\gamma\|_{\mu, E}$ is small enough.   Thus we
have
$$
\frac{\pi}{2 \|\gamma\|_{\mu,Q_{f}}}\le \frac{\pi}{2
\|\gamma\|_{\mu, E}} \le \frac{\pi}{2 \|\gamma\|_{\mu,Q_{f}}} + C'.
$$
 The second assertion follows.
\end{proof}


\section{From Bounded geometry to strictly contracting}

The main purpose of this section is to prove that bounded geometry
implies the strict contracting property of the operator $\sigma_{f}:
T_{f} \to T_{f}$.  Let us first prove a technical lemma.
\begin{lemma}\label{tech}
Let $H = \{z\:\big{|}\: 1 < |z| < R\}$ be an annulus. Let $F_{n}(w)$
be a sequence of integrable and holomorphic functions defined on $H$
such that
\begin{equation}\label{assum}
\int_{H}|F_{n}(w)| dw \wedge d\overline{w} \to 0 \hbox{ as } n \to
\infty.
\end{equation}
Then  for any $1 < r < R$,
$$
\int_{|w| = r} |F_{n}(w)| |dw| \to 0 \hbox{ as } n \to \infty.
$$
\end{lemma}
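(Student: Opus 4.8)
The plan is to fix an intermediate radius $1 < r < R$ and control the line integral $\int_{|w|=r}|F_n(w)|\,|dw|$ by the area integral over a thin sub-annulus surrounding the circle $|w|=r$. First I would choose $\rho_1, \rho_2$ with $1 < \rho_1 < r < \rho_2 < R$ and work on the closed sub-annulus $\overline{H'} = \{\rho_1 \le |w| \le \rho_2\} \subset H$. On $H'$ the functions $F_n$ are holomorphic, so I can apply the sub-mean-value property of the subharmonic function $|F_n|$: for each point $w_0$ with $|w_0| = r$, a disk $D(w_0, \delta)$ of some fixed radius $\delta = \min(r-\rho_1, \rho_2 - r)$ lies in $H'$, and
$$
|F_n(w_0)| \le \frac{1}{\pi \delta^2}\int_{D(w_0,\delta)} |F_n(w)|\, dA(w) \le \frac{1}{\pi \delta^2}\int_{H'} |F_n(w)|\, dA(w),
$$
where $dA$ denotes area measure (a fixed constant multiple of $|dw\wedge d\overline{w}|$). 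The right-hand side is independent of $w_0$, so taking the supremum over $|w_0| = r$ gives a uniform bound $\|F_n\|_{L^\infty(|w|=r)} \le C_\delta \int_{H'}|F_n|\,dA$.

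Next I would integrate this pointwise bound over the circle $|w| = r$, whose length is $2\pi r$, to obtain
$$
\int_{|w|=r}|F_n(w)|\,|dw| \le 2\pi r\, C_\delta \int_{H'}|F_n(w)|\,dA(w) \le 2\pi r\, C_\delta \int_{H}|F_n(w)|\,dA(w).
$$
By hypothesis $(\ref{assum})$ the last integral tends to $0$ as $n \to \infty$, and since $2\pi r\, C_\delta$ is a constant depending only on $r$, $\rho_1$, $\rho_2$ (hence only on $r$ and $R$ after a fixed choice), the conclusion follows.

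The only point requiring a little care — and the nearest thing to an obstacle — is ensuring the sub-mean-value inequality is applied on disks genuinely contained in the region of holomorphy: since $F_n$ is holomorphic on the open annulus $H$ and $1 < \rho_1 < r < \rho_2 < R$, every disk of radius $\delta = \min(r-\rho_1,\rho_2-r)$ centered on $|w|=r$ stays inside $\overline{H'} \subset H$, so $|F_n|$ is subharmonic there and the estimate is valid. Everything else is routine; one simply notes that $|dw\wedge d\overline{w}| = 2\,dA$ so that the passage between the area form in $(\ref{assum})$ and Lebesgue area measure only changes the constant. This completes the proof.
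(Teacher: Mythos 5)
Your proof is correct, and it takes a genuinely different (and in fact simpler) route than the paper's. The paper proves the lemma via the Cauchy integral formula: given $\epsilon > 0$, it first uses the area hypothesis together with a pigeonhole (Fubini) argument to find, for each large $n$, two circles $|z| = R_1$ near the inner boundary and $|z| = R_2$ near the outer boundary on which the line integrals of $|F_n|$ are less than $\epsilon$; it then writes $F_n(w)$ for $|w| = r$ as a Cauchy integral over those two circles, bounds $|z - w|$ from below by a constant $C(r,\delta)$, and integrates the resulting uniform pointwise bound around $|w| = r$. You instead bypass the pigeonhole step entirely by exploiting the subharmonicity of $|F_n|$: the sub-mean-value inequality on a fixed-radius disk $D(w_0,\delta) \subset H$ centered at each $w_0$ with $|w_0| = r$ converts the area bound directly into a uniform pointwise bound on the circle, which is then integrated. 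Both arguments are elementary; yours avoids choosing the auxiliary circles $R_1, R_2$ and gives an explicit constant $2r/\delta^2$ up front, whereas the paper's version makes the dependence on nearness to the boundary a little more transparent. Either is perfectly adequate for the application in Lemma~\ref{expcont2}.
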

\begin{proof}
Let $1 < r < R$ be fixed. Take $\delta > 0$ such that $1 + \delta <
r <R- \delta$.  Let
$$
C(r, \delta) = \min\{r- 1- \delta, R-\delta - r\}.
$$
It follows that $C(r, \delta) > 0$. For any $\epsilon > 0$,  by
(\ref{assum}), there is an $N$ such that for every $n > N$, there
exist  $1< R_{1} < 1 + \delta$ and $R-\delta  < R_{2} < R$, such
that
$$
\int_{|z| = R_{1}} |F_{n}(z)| |dz| < \epsilon
$$
and
$$
\int_{|z| = R_{2}} |F_{n}(z)| |dz| < \epsilon.
$$
For $|w| = r$, by Cauchy formula, we have
$$
|F_{n}(w)|  \le  \bigg{|}\frac{1}{2\pi i}\int_{{\Bbb T}_{R_{1}} \cup
{\Bbb T}_{R_{2}}} \frac{F_{n}(z)}{z - w} dz \bigg{|}.
$$
Note that $|z - w|  \ge C(r, \delta)$ for $|w| = r \hbox{ and } z\in
{\Bbb T}_{R_{1}} \cup {\Bbb T}_{R_{2}}$. This implies that
$$
|F_{n}(w)| \le  \frac{\epsilon}{\pi C(r, \delta)}
$$
holds for all $|w| = r$ and $n > N$. It follows that for all $n >
N$,
$$
\int_{|w| = r} |F_{n}(w)| |dw| \le  \frac{2 r \epsilon }{C(r,
\delta)}.
$$
The Lemma follows.
\end{proof}

For a Beltrami coefficient $\mu$ defined on $S^{2} \setminus Q_{f}$,
we use $\tilde{\mu}$ to denote $f^{*}(\mu)$.

\begin{lemma}~\label{expcont2}
Let $b>0$.  Then there is a constant $0< a < 1$ depending only on
$b$ such that if both $[\mu]$ and  $[\tilde{\mu}]$ belong to  $T_{f,
b}$, then
$$
\int_{\cup \phi_{\tilde{\mu}}(A_{i})} |\tilde{q} (w)| dw \wedge
d\overline{{w}} \geq a
$$
where $\tilde{q} (w) dw^{2}$ is any integrable holomorphic quadratic
differential defined on ${\Bbb P}^{1} \setminus
\phi_{\tilde{\mu}}(Q_{f})$ with
$$
\int_{{\Bbb P}^{1} \setminus \phi_{\tilde{\mu}}(Q_{f})} |\tilde{q}
(w)| dw \wedge d\overline{{w}} = 1.
$$
\end{lemma}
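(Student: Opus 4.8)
The plan is to argue by contradiction and exploit the compactness built into the bounded geometry condition together with the analyticity of the quadratic differentials on the shielding rings. Suppose the conclusion fails. Then there is a sequence of Beltrami coefficients $\mu_n$ on $S^2\setminus Q_f$ with both $[\mu_n]\in T_{f,b}$ and $[\tilde\mu_n]\in T_{f,b}$, together with unit-norm integrable holomorphic quadratic differentials $\tilde q_n(w)\,dw^2$ on ${\Bbb P}^1\setminus\phi_{\tilde\mu_n}(Q_f)$, such that
$$
\int_{\cup_i\phi_{\tilde\mu_n}(A_i)}|\tilde q_n(w)|\,dw\wedge d\overline w\;\longrightarrow\;0.
$$
Recall that $\mu_n\equiv 0$ on each $A_i$ may be assumed after replacing $\mu_n$ within its class (the shielding rings are where $f$ is holomorphic and the extension puts $0$ there on the relevant pieces after pulling back); in any case $\phi_{\tilde\mu_n}$ restricted to a neighborhood of $\overline{A_i}$ is $K$-quasiconformal with $K$ depending only on... — more precisely, since $[\tilde\mu_n]\in T_{f,b}$, the images $\phi_{\tilde\mu_n}(D_i)$, $\phi_{\tilde\mu_n}(A_i)$ have bounded geometry: the $\phi_{\tilde\mu_n}(D_i)$ are pairwise $b$-separated, separated by $b$ from the finitely many marked points $\phi_{\tilde\mu_n}(P_1)\ni\{0,1,\infty\}$, and each contains a round disk of radius $b$; moreover $\mathrm{mod}(\phi_{\tilde\mu_n}(A_i))=\mathrm{mod}(A_i)$ is a fixed positive constant since $\phi_{\tilde\mu_n}$ is conformal there.

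Next I would normalize each shielding ring image to a standard round annulus. By the bounded-geometry estimates there is a M\"obius map $m_{n,i}$ (with uniformly bounded distortion of the sphere, depending only on $b$ and the fixed data $\{D_i\},\{A_i\}$) carrying $\phi_{\tilde\mu_n}(A_i)$ onto an annulus containing a fixed round sub-annulus $H_i=\{1<|z|<R_i\}$ whose modulus is bounded below by a constant $c_0=c_0(b)>0$, and such that the two boundary complementary components of $H_i$ both have spherical diameter bounded below by $c_0$. Transport $\tilde q_n$ to $H_i$ via $m_{n,i}$: the push-forward $q_{n,i}(z)\,dz^2$ is holomorphic on $H_i$, and since $m_{n,i}$ has uniformly bounded distortion and $\int|\tilde q_n|=1$, the restriction of $\int|q_{n,i}|$ over $H_i$ is comparable to $\int_{\phi_{\tilde\mu_n}(A_i)}|\tilde q_n|$, hence tends to $0$ as $n\to\infty$. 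Now apply Lemma \ref{tech}: for any fixed $1<r<R_i$,
$$
\int_{|z|=r}|q_{n,i}(z)|\,|dz|\;\longrightarrow\;0\qquad(n\to\infty).
$$

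Finally I would derive the contradiction from the fact that $\tilde q_n$ has total mass $1$ on all of ${\Bbb P}^1\setminus\phi_{\tilde\mu_n}(Q_f)$. The curve $\{|z|=r\}$ in $H_i$, transported back by $m_{n,i}$, is a simple closed curve in $\phi_{\tilde\mu_n}(A_i)$ that is non-peripheral in ${\Bbb P}^1\setminus\phi_{\tilde\mu_n}(Q_f)$, separating $\overline{D}_i$ (with its enclosed post-critical points) from the rest of $Q_f$ — in particular it has a definite \emph{extremal length} lower bound in ${\Bbb P}^1\setminus\phi_{\tilde\mu_n}(Q_f)$, uniformly in $n$, because the two sides each contain a pair of points at mutual spherical distance $\ge b$ and at distance $\ge b$ from the curve, so the complementary annulus has modulus bounded above (Lemma \ref{basic-2} after a M\"obius normalization). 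For an integrable holomorphic quadratic differential the \emph{height} of a core curve $\beta$ in its homotopy class satisfies $\mathrm{height}^2(\beta)\le \|\tilde q_n\|_1\cdot \mathrm{mod}(\text{embedded annulus around }\beta)$; but the height is also bounded below by a positive constant whenever the curve is essential and $\tilde q_n$ is forced to "flow" across it — here the right way to phrase it is: a unit-mass holomorphic quadratic differential on a surface obtained by removing finitely many points and disks of bounded geometry must put a definite amount of mass on \emph{every} homotopically essential annulus of definite modulus. Concretely, $\int_{|z|=r}|q_{n,i}(z)|\,|dz|$ controls the $\tilde q_n$-length of a curve homotopic to $\beta$, and summing the length inequality over a fixed family of such curves (one per essential homotopy class, finitely many since the surface has bounded geometry) together with the co-area/length–area inequality forces $\sum_i\int_{\phi_{\tilde\mu_n}(A_i)}|\tilde q_n|$ to stay bounded \emph{below}; this contradicts the assumed convergence to $0$, unless the total mass were not $1$.

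The main obstacle is the last paragraph: turning "the core curves of the $H_i$ have uniformly definite modulus annuli around them" into a uniform \emph{lower} bound for the $\tilde q_n$-mass on $\cup_i\phi_{\tilde\mu_n}(A_i)$. The clean way is to use the compactness of the normalized configurations $({\Bbb P}^1;\,\phi_{\tilde\mu_n}(Q_f))$ modulo M\"obius maps — bounded geometry says these range over a compact family of "sphere minus finitely many points and round-ish disks" — so along a subsequence they converge to a limiting surface $X_\infty$ with a limiting essential annulus $A_{i,\infty}$ of modulus $\ge c_0$; the unit-mass differentials $\tilde q_n$ then converge (after passing to the push-forward picture and using that the $q_{n,i}$ are holomorphic with locally bounded $L^1$ norm away from the finitely many punctures) to a \emph{nonzero} holomorphic quadratic differential $\tilde q_\infty$ on $X_\infty$ of mass $\le 1$, and $\int_{A_{i,\infty}}|\tilde q_\infty|=0$ for every $i$ by the vanishing of the $|z|=r$ integrals. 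But $\tilde q_\infty\not\equiv 0$ and holomorphic cannot vanish identically on an open annulus, and more to the point, the total mass $\int_{X_\infty}|\tilde q_\infty|$ would then be accounted for entirely off $\cup A_{i,\infty}$; one checks this is incompatible with $\tilde q_\infty$ being integrable across the limiting configuration — indeed, the complement of $\cup_i A_{i,\infty}$ in $X_\infty$ is, up to the finitely many punctures in $P_1$, a disjoint union of topological disks (the $\phi(D_i)$'s) and a planar piece, on each of which an $L^1$ holomorphic quadratic differential with vanishing flux through the bounding curves must itself vanish, forcing $\tilde q_\infty\equiv 0$, a contradiction. Hence $a=a(b)>0$ exists.
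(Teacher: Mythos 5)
Your overall architecture matches the paper's: argue by contradiction, observe that $\phi_{\tilde\mu_n}$ is conformal on each shielding ring so the modulus is fixed, transport to a standard round annulus, and apply Lemma~\ref{tech} to conclude that the circle integrals $\int_{|z|=r}|q_{n,i}|\,|dz|$ tend to zero. Up to that point you are on the paper's track. The difficulty is entirely in the concluding step, and there your argument has a genuine gap.

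You want to derive a contradiction by extracting a subsequential limit $\tilde q_\infty$ on a limiting surface $X_\infty$, asserting that $\tilde q_\infty$ is nonzero, and then showing it vanishes. But the nonvanishing of $\tilde q_\infty$ is never justified, and it is in fact the whole content of the lemma. The normal families argument gives you locally uniform convergence on compact subsets of $X_\infty$, but the $L^1$-mass of $\tilde q_n$ can a priori escape into the cusps at the finitely many punctures $w_{j,n}=\phi_{\tilde\mu_n}(z_j)$: a quadratic differential with a simple pole of residue $b_{j,n}$ concentrates mass of order $|b_{j,n}|$ in an arbitrarily small neighborhood of $w_{j,n}$, and there is no general principle that prevents all of the unit mass from accumulating there while the restrictions to fixed compact sets tend to zero. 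Indeed, once you know the circle integrals vanish in the limit, the local uniform limit $\tilde q_\infty$ \emph{must} be zero on a circle and hence (by analyticity and connectedness) identically zero; so you are left needing to rule out exactly the scenario that all the mass disappears near the punctures. Your proposal never engages with this, and the "vanishing flux" sentence at the end does not supply the missing estimate (it also misdescribes $X_\infty\setminus\cup A_{i,\infty}$: the $\phi(D_i)$'s are removed from $X_\infty$, so the complement of the annuli is a single connected planar piece, not a union of disks).

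The paper closes precisely this gap with an explicit partial-fraction decomposition $\tilde q_n(w) = \sum_j b_{j,n}/(w - w_{j,n}) + g_n(w)$, where $g_n$ is holomorphic across the punctures. The Cauchy integral over the curves $\gamma_{i,n}$ inside the annuli then shows $g_n\to 0$ locally uniformly, and a separate argument (using that the residues of $b_{j,n}/((\xi-w_{j,n})(\xi-w))$ cancel and the limiting $w_j$ are distinct by bounded geometry) shows each residue $b_{j,n}\to 0$. Only with both pieces in hand does one conclude $\int|\tilde q_n|\to 0$, contradicting the unit normalization. To repair your proposal you need an analogue of this residue analysis, or some other device that quantitatively controls the mass near the punctures; the compactness alone does not do it.
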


\begin{proof} Let us prove it by contradiction.
By using a M\"{o}bius transformation which fixes $0$ and $1$, and
maps  $\phi_{\tilde{\mu}}(a_{1})$ to the infinity, we may assume
that $\infty \in D_{1}$.  Since $\tilde{\mu} \in T_{f,b}$, such
M\"{o}bius transformation lies in a compact family and therefore the
assumption does not affect the validity of the proof.

Now let us suppose that there exist a sequence of pairs
$(\tilde{\mu}_{n}, \mu_{n})$ in $T_{f, b}$ and a sequence of
holomorphic quadratic differentials $\tilde{q}_{n}$ over ${\Bbb
P}^{1} \setminus \phi_{\tilde{\mu_{n}}}(Q_{f})$ such that
\begin{equation}\label{norm-1}
\int_{{\Bbb P}^{1} \setminus \phi_{\tilde{\mu}_{n}}(Q_{f})}
|\tilde{q}_{n} (w)| dw \wedge d\overline{{w}} = 1,
\end{equation}
and
\begin{equation}\label{con-1}
\int_{\cup \phi_{\tilde{\mu}_{n}}(A_{i})} |\tilde{q}_{n}(w)| dw
\wedge d\overline{w} \to 0  \quad \hbox{as}\quad n\to \infty.
\end{equation}

By Lemma~\ref{srl} $f(\cup_{i} \overline{A}_{i})\subset
\cup_{i}D_{i}$ and $f$ is holomorphic in $\overline{D_{i}} \cup
A_{i}$.  This, together with the fact that $\phi_{\mu_{n}}$ is
holomorphic on $\cup_{i}D_{i}$, implies that
$\phi_{\tilde{\mu}_{n}}$ is holomorphic and thus univalent on
$\cup_{i} (\overline{D}_{i} \cup A_{i}) $.

Note that  every ring $A_{i}$ is holomorphically isomorphic to some
annulus
$$
H_{i} = \{z\:\big{|}\: 1 < |z| < R_{i}\}.
$$
Let $\Phi_{i}: H_{i} \to A_{i}$ be a holomorphic isomorphism and let
${\Bbb T}_{r}$ denote the circle $\{z\:\big{|}\: |z| = r\}$. We
claim that for every $1 < r < R_{i}$,
\begin{equation}\label{claim-1}
\int_{\phi_{\tilde{\mu}_{n}}(\Phi_{i}({\Bbb T}_{r}))}
|\tilde{q}_{n}(w)| |dw| \to 0  \quad \hbox{as}\quad n\to \infty.
\end{equation}
In fact, from (\ref{con-1}), we have
\begin{equation}\label{imedia-2}
\int_{H_{i}} |\tilde{q}_{n}( (\phi_{\tilde{\mu}_{n}} \circ
\Phi_{i})(z))||(\phi_{\tilde{\mu}_{n}} \circ \Phi_{i})'(z)|^{2}dz
\wedge d\overline{z} \to 0 \quad \hbox{as}\quad n\to \infty.
\end{equation}
By Lemma~\ref{tech}, we have
$$
\int_{{\Bbb T}_{r}} |\tilde{q}_{n}( (\phi_{\tilde{\mu}_{n}} \circ
\Phi_{i})(z))||(\phi_{\tilde{\mu}_{n}} \circ \Phi_{i})'(z)|^{2} |dz|
\to 0 \quad \hbox{as}\quad n\to \infty.
$$
 Since $\phi_{\tilde{\mu}_{n}} \circ
\Phi_{i}$ is univalent on $H_{i}$, it follows from Koebe's
$1/4$-theorem that for every $1 < r < R_{i}$, there is a $C > 1$
depending only on $r, R_{i}$, and $b$ such that
\begin{equation}\label{imedia-1}
1/C <  |(\phi_{\tilde{\mu}_{n}} \circ \Phi_{i})'(z)| < C
\end{equation}
holds for all $z \in {\Bbb T}_{r}$. We thus have
$$
\int_{{\Bbb T}_{r}} |\tilde{q}_{n}( (\phi_{\tilde{\mu}_{n}} \circ
\Phi_{i})(z))||(\phi_{\tilde{\mu}_{n}} \circ \Phi_{i})'(z)| |dz| \to
0 \quad \hbox{as}\quad n\to \infty.
$$
This implies (\ref{claim-1}) and the claim has been proved. Now for
every $A_{i}$, take an arbitrary $1< r_{i} < R_{i}$ and let
\begin{equation}\label{curve-1}
\gamma_{i, n} = (\phi_{\tilde{\mu}_{n}} \circ \Phi_{i})({\Bbb
T}_{r_{i}}).
\end{equation}
For every $n$, Let  $R_{n}$  denote the component of ${\Bbb P}^{1}
\setminus \cup_{i} \gamma_{i,n}$ such that
$$
\partial R_{n} = \cup_{i} \gamma_{i,n}.
$$

Recall that $P_{1} = \{z_{j}\}$ and $P_{f}' = \{a_{i}\}$ are both
finite sets and each $\tilde{q}_{n}=\tilde{q}_{n}(w)dw^2$ has at
most simple poles at the points in
$\{\phi_{\tilde{\mu}_{n}}(z_{j})\}$. This implies that one can write
\begin{equation}\label{decomp}
\tilde{q}_{n}(w) =  \sum_{j}
\frac{b_{j,n}}{w-\phi_{\tilde{\mu}_{n}}(z_{j})} + g_{n}(w)
\end{equation}
where $g_{n}(w)$ is a holomorphic function  on ${\Bbb P}^{1}
\setminus \phi_{\tilde{\mu}_{n}}(\overline{D_{f}})$.

Since $\tilde{\mu}_{n} \in T_{f, b}$, it follows by taking a
subsequence if necessary, that we can assume that for every $a_{i}$,
the sequence
$$
a_{i,n}= \phi_{\tilde{\mu}_{n}} (a_{i})
$$
converges to a point $e_{i}$ with respect to  the spherical distance
as $n$ goes to $\infty$.  Since $\phi_{\tilde{\mu}_{n}}$ is
holomorphic in $\overline{D_{i}} \cup A_{i}$, similarly,  we can
assume that for every $D_{i}$, the sequence
$$
D_{i,n}=\phi_{\tilde{\mu}_{n}}(D_{i})
$$
converges to a topological disk $E_{i}$ with respect to the
Hausdorff metric. It follows that each $E_{i}$ contains a round disk
of radius $b$ centered at $e_{i}$.  Note that by taking each $A_{i}$
thinner, we may assume that $\phi_{\tilde{\mu}_{n}}$ is univalent in
a larger disk containing $\overline{D_{i} \cup A_{i}}$ in its
interior. So by taking a subsequence if necessary, we can also
assume that
$$
A_{i,n}=\phi_{\tilde{\mu}_{n}}(A_{i})
$$
converges to a topological annulus $B_{i}$ with respect to the
Hausdorff metric. It is clear that
$$
\hbox{mod}(B_{i})=\hbox{mod}(A_{i}).
$$

Note that $\gamma_{i, n} = (\phi_{\tilde{\mu}_{n}} \circ
\Phi_{i})({\Bbb T}_{r_{i}})$. Since $(\phi_{\tilde{\mu}_{n}} \circ
\Phi_{i})$ maps $H_{i}$ univalently into ${\Bbb P}^{1} \setminus
\{0, 1, \infty\}$ and since $\tilde{\mu}_{n} \in T_{f,b}$,  it
follows again by taking a subsequence if necessary, that we may
assume that $\phi_{\tilde{\mu}_{n}} \circ \Phi_{i}$ converges to
some univalent function $\Lambda_{i}$ defined on $H_{i}$, and
moreover,
\begin{equation}\label{uniform}(\phi_{\tilde{\mu}_{n}} \circ \Phi_{i})(z)  \to
\Lambda_{i}(z)\hbox{ uniformly in any compact set of } H_{i}.
\end{equation} Let
$$ \gamma_{i} = \Lambda_{i}({\Bbb T}_{r_{i}}).$$It is not difficult to see
that every $\gamma_{i}$ is a real analytic and simple closed curve
which is homotopic to the core curve of $B_{i}$.

Again by taking a subsequence if necessary,  we may assume that as
$n \to \infty$, for every $z_{j}\in P_{1}$,
$$w_{j,n}=\phi_{\tilde{\mu}_{n}}(z_{j}) $$ converges to  some $w_{j}$ in the
spherical distance. It is important to note that the objects in
$\{E_{i}\}$ and $\{w_{j}\}$ still satisfy the bounded geometry
properties in Definition~\ref{bounded geometry}.  Let
$$
{\mathcal {R}}={\Bbb P}^{1} \setminus (\cup_{i} \overline{E}_{i}
\cup \{ w_{j}\}).
$$

Since $g_{n}(w)$ is a holomorphic function  on ${\Bbb P}^{1}
\setminus \phi_{\tilde{\mu}_{n}}(Q_{f})$, it follows that for any
compact set $W \subset {\mathcal{R}}$, the function $g_{n}(w)$ is
defined on $W$ provided $n$ is large enough. Moreover, from
(\ref{curve-1}), for any such compact set $W$, we can always take
$r_{i}$ close to $1$ or $R_{i}$ such that
$$
W \subset R_{n}.
$$

For any $w\in W$, from (\ref{decomp}) and Cauchy formula, we have
$$
g_{n}(w)=\frac{1}{2 \pi i} \int_{\cup_{i} \gamma_{i,n}}
\frac{g_{n}(\xi)}{\xi - w} d \xi
$$
$$
= \frac{1}{2 \pi i} \int_{\cup_{i} \gamma_{i,n}}
\frac{\tilde{q}_{n}(\xi)}{\xi - w} d \xi - \frac{1}{2 \pi i}
\sum_{j} \int_{\cup_{i} \gamma_{i,n}} \frac{b_{j,n}}{(\xi -
w_{j,n})(\xi - w)} d \xi
$$
Note that by assumption $\infty \in D_{1}$ and hence $\infty \notin
R_{n}$. It follows that
$$
\frac{b_{j,n}}{(\xi - w_{j,n})(\xi - w)}
$$
is holomorphic in $R_{n}$ and  the residues at the two simple poles
are equal to each other. It follows that its integral along
$\cup_{i} \gamma_{i,n}$ is zero.  We thus have
$$
g_{n}(w)= \frac{1}{2 \pi i} \int_{\cup_{i} \gamma_{i,n}}
\frac{\tilde{q}_{n}(\xi)}{\xi - w} d \xi.
$$
By (\ref{claim-1}) and the fact that $d(W, \cup_{i} \gamma_{i,n}) >
0$, it follows that $g_{n}(w)\to 0$ uniformly in $W$ as $n \to
\infty$. In particular, since $\cup_{i} \gamma_{i,n}$ is a compact
subset of $\mathcal{R}$, it follows  that $g_{n}(w) \to 0$ uniformly
for $w \in \cup_{i} \gamma_{i,n}$.  This, together with
(\ref{claim-1}) and  (\ref{decomp}),  implies
\begin{equation}\label{contr}
\int_{\cup_{i} \gamma_{i,n}} \bigg{|} \sum_{j}\frac{b_{j,n}}{w -
w_{j,n}} \bigg{|} |dw| \to 0, \quad \hbox{as} \quad n\to \infty.
\end{equation}

We claim that $b_{j,n}\to 0$ as $n\to \infty$ for each $j$. Let us
prove the claim by contradiction.  Let $\beta_{n} =\max_{j}\{
|b_{j,n}|\}$. By taking a subsequence we may assume that there is an
$\epsilon
>0$ such that $\beta_{n}\geq \epsilon$ for all $n \ge 0$.
Let $$h_{j,n}=b_{j,n}/\beta_{n}.$$ Then $\max_{j}\{ |h_{j,n}|\} =1$.
By (\ref{contr}), we have
\begin{equation}\label{final-eq}
\int_{\cup_{i} \gamma_{i,n}} \bigg{|} \sum_{j}\frac{h_{j,n}}{w -
w_{j,n}} \bigg{|} |dw| \to 0 \quad \hbox{as} \quad n\to \infty.
\end{equation}
By taking a convergent subsequence again, we may assume that every
$h_{j,n}$ converges to a number $h_{j}$ as $n$ goes to infinity. We
thus have
\begin{equation}\label{cont-1}
\max_{j}\{ |h_{j}|\}=1.
\end{equation}
From (\ref{uniform}) and (\ref{final-eq}), we have
$$
\int_{\cup_{i}\gamma_{i}} \bigg{|} \sum_{j}\frac{h_{j}}{w - w_{j}}
\bigg{|} |dw| = 0.
$$
This implies that
$$
\sum_{j}\frac{h_{j}}{w - w_{j}}=0  \hbox{ for all } w \in
\cup_{i}\gamma_{i} \hbox{ and thus equal to zero everywhere}.
$$
Since all $w_{j}$ are distinct with each other, it follows by
computing the residue at each $w_{j}$ that all  $h_{j}$ are equal to
zero. This contradicts with (\ref{cont-1}) and the claim has been
proved.

Since $g_{n}(z) \to 0$ uniformly on any compact set of $\mathcal{R}$
and $b_{j,n}\to 0$ as $n\to \infty$ for every $j$, it follows from
(\ref{decomp}) that
$$
\int_{R_{n}} |\tilde{q}_{n}(w)|dw \wedge d\overline{w}\to 0 \quad
\hbox{as} \quad n\to \infty.
$$
This, together with (\ref{con-1}), implies
$$
\int_{{\Bbb P}^{1} \setminus \phi_{\tilde{\mu}_{n}}(Q_{f})}
|\tilde{q}_{n} (w)| dw \wedge d\overline{{w}} \to 0 \quad \hbox{as}
\quad n\to \infty.
$$
This contradicts with the assumption (\ref{norm-1}) and completes
the proof of the lemma.
\end{proof}
By Propositions~\ref{key}, \ref{key'} and Lemma~\ref{expcont2},  we
have
\begin{corollary}\label{key-3}{\rm
Let $b>0$.  Then there is a constant $0< \delta < 1$ depending only
on $b$ such that
$$
\|d\sigma_{f}\big{|}_{\tau} \| \le \delta
$$
for all $\tau \in T_{f, b}$.}
\end{corollary}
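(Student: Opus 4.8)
The plan is to read the bound off directly from the definition of the Teichm\"uller norm (Definition~\ref{Tech-norm}) by playing the push-forward estimate of Proposition~\ref{key} against the duality identity of Proposition~\ref{key'}. Fix $\tau=[\mu]\in T_{f,b}$, set $\tilde{\mu}=f^{*}\mu$ so that $\sigma_{f}(\tau)=[\tilde{\mu}]$, and let $g=\phi_{\mu}\circ f\circ\phi_{\tilde{\mu}}^{-1}$ be the rational map of \S4. Given a tangent vector $\xi$ of $T_{f}$ at $\tau$, put $\tilde{\xi}=d\sigma_{f}\big{|}_{\tau}(\xi)$; by Lemma~\ref{derivaltive-formula}, $\tilde{\xi}(w)=\xi(g(w))\frac{\overline{g'(w)}}{g'(w)}$. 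Since $\|d\sigma_{f}\big{|}_{\tau}\|$ is the supremum of $\|\tilde{\xi}\|/\|\xi\|$ over nonzero $\xi$, it suffices to exhibit $\delta=\delta(b)\in(0,1)$ with $\|\tilde{\xi}\|\le\delta\|\xi\|$ for all $\xi$.

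To estimate $\|\tilde{\xi}\|$ I would go back to Definition~\ref{Tech-norm} and fix a nonzero integrable holomorphic quadratic differential $\tilde{q}=\tilde{q}(w)dw^{2}$ on ${\Bbb P}^{1}\setminus\phi_{\tilde{\mu}}(Q_{f})$, normalized so that $\int|\tilde{q}|=1$. Let $q=q(z)dz^{2}$ be its push-forward under $g$, defined by formula (\ref{push-forward}). Proposition~\ref{key} tells us two things at once: that $q$ is integrable, hence $q(z)dz^{2}\in{\rm A}_{\mu}$ and so a legitimate competitor in the supremum defining $\|\xi\|$ at $\tau$, and that $\int|q|\le 1-\int_{\cup_{i}\phi_{\tilde{\mu}}(A_{i})}|\tilde{q}|$. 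Meanwhile Proposition~\ref{key'} gives the identity $\int\tilde{\xi}\tilde{q}=\int\xi q$, and by the very definition of the Teichm\"uller norm $|\int\xi q|\le\|\xi\|\cdot\int|q|$. Chaining these three facts,
$$
\Big|\int_{{\Bbb P}^{1}\setminus\phi_{\tilde{\mu}}(Q_{f})}\tilde{\xi}(w)\tilde{q}(w)\,dw\wedge d\overline{w}\Big|\ \le\ \|\xi\|\Big(1-\int_{\cup_{i}\phi_{\tilde{\mu}}(A_{i})}|\tilde{q}(w)|\,dw\wedge d\overline{w}\Big).
$$

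Finally the bounded geometry is brought in through Lemma~\ref{expcont2}: since $\tau\in T_{f,b}$ — and, in the setting in which this corollary is invoked, also $\sigma_{f}(\tau)=[\tilde{\mu}]\in T_{f,b}$ — that lemma supplies a constant $a=a(b)\in(0,1)$ with $\int_{\cup_{i}\phi_{\tilde{\mu}}(A_{i})}|\tilde{q}|\ge a$ for every unit-norm $\tilde{q}$. Substituting, $|\int\tilde{\xi}\tilde{q}|\le(1-a)\|\xi\|$ for every such $\tilde{q}$; taking the supremum over $\tilde{q}$ gives $\|\tilde{\xi}\|\le(1-a)\|\xi\|$, whence $\|d\sigma_{f}\big{|}_{\tau}\|\le 1-a=:\delta$, a constant in $(0,1)$ depending only on $b$. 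The genuine difficulty is concentrated entirely in Lemma~\ref{expcont2}, the heart of this section; within the present argument the only point that demands a little care is that the push-forward $q$ really belongs to ${\rm A}_{\mu}$, which is precisely why Proposition~\ref{key} is phrased as an integrability bound rather than a mere formal identity.
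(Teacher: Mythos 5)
Your argument is correct and is essentially identical to the paper's, which simply cites Propositions~\ref{key}, \ref{key'} and Lemma~\ref{expcont2} without spelling out the chain of inequalities; you have filled in exactly the intended deduction, including the observation that the push-forward $q$ is an admissible competitor in the supremum defining $\|\xi\|$ because Proposition~\ref{key} gives its integrability. You also correctly flagged the one genuine subtlety: Lemma~\ref{expcont2} requires \emph{both} $[\mu]$ and $[\tilde\mu]=\sigma_f(\tau)$ to lie in $T_{f,b}$, whereas the corollary as stated only hypothesizes $\tau\in T_{f,b}$ — in the paper's sole application (Lemma~\ref{fp2}, where the whole orbit $\{[\mu_n]\}$ stays in $T_{f,b}$) both conditions hold, so this is a mild imprecision in the corollary's phrasing rather than a flaw in your reasoning.
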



Given any $[\mu_{0}] \in T_{f}$. Let $[\mu_{n}]=
\sigma_{f}^{n}([\mu_{0}]) = [(f^{*})^{n}\mu_{0}]$ for $n \ge 0$.

\begin{lemma}~\label{fp2}
Suppose that there exist a $b > 0$  and a point $[\mu_{0}] \in
T_{f}$ such that $\{ [\mu_{n}] \}_{n=0}^{\infty}\subset T_{f,b}$.
Then $\sigma_{f}$ has a unique fixed point in $T_{f}$.
\end{lemma}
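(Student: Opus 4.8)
The plan is to follow the Thurston--Douady--Hubbard scheme of showing that $\sigma_f$ is eventually contracting along the given orbit, exploiting that strict contraction is available on the bounded-geometry stratum $T_{f,b}$ (Corollary~\ref{key-3}) while mere non-expansion is available everywhere (Corollary~\ref{key-6} and Lemma~\ref{infinitesimal-metric}).

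First, for \emph{existence} of a fixed point, I would set $D_0:=d_T([\mu_0],[\mu_1])$; by Corollary~\ref{key-6} the operator $\sigma_f$ is non-expanding for the Teichm\"uller metric, so $d_T([\mu_n],[\mu_{n+1}])\le D_0$ for all $n$. The key step is to upgrade this to geometric decay. Given $n$ and any $\epsilon\in(0,1)$, pick (via Lemma~\ref{infinitesimal-metric}) a piecewise smooth path $\tau$ from $[\mu_n]$ to $[\mu_{n+1}]$ of length $<d_T([\mu_n],[\mu_{n+1}])+\epsilon\le D_0+1$; then every point of $\tau$ lies at $d_T$-distance $<D_0+1$ from $[\mu_n]\in T_{f,b}$, so by Lemma~\ref{tech-6} the whole path $\tau$ is contained in $T_{f,b'}$ with $b'=b'(b,D_0+1)$ independent of $n$ and of $\epsilon$, and Corollary~\ref{key-3} bounds $\|d\sigma_f\|$ along $\tau$ by some $\delta'=\delta'(b')<1$. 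Pushing $\tau$ forward by $\sigma_f$, the chain rule together with Lemma~\ref{infinitesimal-metric} gives
\[
d_T([\mu_{n+1}],[\mu_{n+2}])\le\delta'\bigl(d_T([\mu_n],[\mu_{n+1}])+\epsilon\bigr);
\]
letting $\epsilon\to0$ yields $d_T([\mu_{n+1}],[\mu_{n+2}])\le\delta'\,d_T([\mu_n],[\mu_{n+1}])$, hence $d_T([\mu_n],[\mu_{n+1}])\le(\delta')^n D_0$. So $\{[\mu_n]\}$ is Cauchy; by completeness of $T_f$ it converges to some $[\mu_\ast]$, and by continuity of $\sigma_f$ (it is complex analytic by Lemma~\ref{pull-back}) we get $\sigma_f([\mu_\ast])=\lim_n\sigma_f([\mu_n])=\lim_n[\mu_{n+1}]=[\mu_\ast]$, so $[\mu_\ast]$ is a fixed point.

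For \emph{uniqueness}, let $[\nu]$ be any fixed point of $\sigma_f$ and put $d_n:=d_T([\mu_n],[\nu])$, $d_0=d_T([\mu_0],[\nu])<\infty$. Non-expansion (Corollary~\ref{key-6}) and $\sigma_f([\nu])=[\nu]$ give $d_{n+1}\le d_n$, so $d_n\le d_0$ for all $n$. Then the same path argument---now applied to a near-geodesic from $[\mu_n]\in T_{f,b}$ to $[\nu]$, which stays in $T_{f,\hat b}$ for $\hat b=\hat b(b,d_0+1)$ by Lemma~\ref{tech-6}, on which $\|d\sigma_f\|\le\hat\delta<1$ by Corollary~\ref{key-3}---yields $d_{n+1}\le\hat\delta\,d_n$, whence $d_n\to0$ and $[\mu_n]\to[\nu]$. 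Comparing with $[\mu_n]\to[\mu_\ast]$ forces $[\nu]=[\mu_\ast]$, so $[\mu_\ast]$ is the unique fixed point.

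I expect the only delicate point to be the handling of Teichm\"uller geodesics: since they need not exist one must work with near-geodesics and verify that these remain inside a single stratum $T_{f,b'}$, with a contraction constant independent of $n$ and of the approximation parameter $\epsilon$. This is precisely where the global non-expansion of $\sigma_f$ (Corollary~\ref{key-6}) is indispensable---it keeps consecutive orbit points, and the orbit points relative to a candidate fixed point, at uniformly bounded $d_T$-distance, so that Lemma~\ref{tech-6} can be applied with a stratum constant that does not degenerate.
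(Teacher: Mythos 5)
Your proof is correct and follows essentially the same route as the paper's (very terse) proof, which just cites Corollary~\ref{key-3} and Lemma~\ref{infinitesimal-metric} without elaboration. You carefully fill in the details the paper omits, most importantly the role of Lemma~\ref{tech-6} in guaranteeing that near-geodesics emanating from the orbit stay inside a fixed stratum $T_{f,b'}$ so that the contraction constant from Corollary~\ref{key-3} is uniform.
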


\begin{proof}
From  Corollary~\ref{key-3} and Lemma~\ref{infinitesimal-metric}, it
follows that  $\{ [\mu_{n}]\}_{n=0}^{\infty}$ is a Cauchy sequence.
Since $ T_{f}$ is complete, $[\mu_{n}]$ converges to a limit point
$[\mu]$ in $T_{f}$, that is,
$$
\lim_{n\to\infty}[\mu_{n}]=[\mu].
$$
It follows that  $\sigma_{f}([\mu])=[\mu]$. The uniqueness of the
fixed point follows also from Corollary~\ref{key-3}.
\end{proof}


\section{No Thurston obstruction implies bounded geometry}

\begin{lemma}\label{universal-k}
Suppose that $f$ has no Thurston obstructions. Then there is an
integer  $k
> 0$ such that for every $f$-stable multi-curve $\Gamma =
 \{\gamma_{i}\}$ with $\gamma_{i}
\subset S^{2} \setminus Q_{f}$ and the associated linear
transformation matrix $A_{\Gamma}$,  we have
\begin{equation}\label{matrix-norm}
\max_{j}\sum_{i} b_{i, j} < 1/2
\end{equation}
where $A_{\Gamma}^{k} = (b_{ij})$.
\end{lemma}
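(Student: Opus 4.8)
The plan is to deduce the statement from two facts: first, that having no Thurston obstruction means that every $f$-stable multi-curve $\Gamma\subset S^2\setminus Q_f$ has leading eigenvalue $\lambda(\Gamma,f)<1$ for the transition matrix $A_\Gamma=f_\Gamma$; and second, that the set of such multi-curves, up to isotopy and up to the combinatorial data recording how preimage components distribute, is \emph{finite}. Since $S^2\setminus Q_f$ is a sphere with finitely many punctures and finitely many holes (boundary curves coming from the $\partial D_i$), the number of isotopy classes of simple closed non-peripheral curves that can occur in an $f$-stable multi-curve, and the number of possible matrices $A_\Gamma$ that arise, is bounded: the entries $\tfrac{1}{d_{i,j,\alpha}}$ are rationals with bounded denominators (at most $\deg f$) and the number of preimage components is bounded by $\deg f$, and the dimension $n=\#\Gamma$ is bounded by the complexity of the surface (roughly $\#Q_f-3$). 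Hence there are only finitely many matrices $A_\Gamma$ that can appear as $\Gamma$ ranges over all $f$-stable multi-curves.

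The key step is then: for a fixed non-negative $n\times n$ matrix $A$ with spectral radius $\rho(A)<1$, the powers $A^k$ satisfy $\|A^k\|\to 0$ in any fixed matrix norm, in particular in the norm $\|B\|=\max_j\sum_i |b_{ij}|$ (the $\ell^1$-operator norm, i.e.\ max column sum). So there is $k(A)$ with $\max_j\sum_i (A^k)_{ij}<1/2$ for all $k\ge k(A)$. Taking $k=\max_A k(A)$ over the finitely many matrices $A$ that can occur, and using that all entries of every $A_\Gamma$ and its powers are non-negative (so the max column sum really is the norm), gives a single $k>0$ that works for every $f$-stable multi-curve $\Gamma$ with $\gamma_i\subset S^2\setminus Q_f$. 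I would spell out the spectral-radius-to-norm-decay step via the Gelfand formula $\rho(A)=\lim_k\|A^k\|^{1/k}$, or more elementarily via the Jordan form, noting $\rho(A_\Gamma)=\lambda(\Gamma,f)<1$ precisely because $f$ has no Thurston obstruction.

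The main obstacle I expect is making precise and airtight the claim that only finitely many transition matrices arise. One has to be careful that $f$-stability is phrased in $S^2\setminus P_f$ while the curves here live in $S^2\setminus Q_f$; since $Q_f\supset P_f'$ but the holomorphic disks $D_i$ each contain only the single post-critical accumulation point $a_i$ plus possibly other post-critical points, one should check that a multi-curve in $S^2\setminus Q_f$ which is $f$-stable in the appropriate sense still has a transition matrix whose entries and size are controlled by $\deg f$ and $\#Q_f$; the combinatorics of how $f^{-1}(\gamma_j)$ meets the $D_i$ and $A_i$ could in principle produce infinitely many isotopy types if one is not careful, but the bounded-complexity argument (a simple closed curve in a fixed finitely-punctured sphere that is a component of $f^{-1}(\gamma)$ has bounded intersection number with a fixed filling system, hence lies in finitely many isotopy classes once we quotient by the action, and $f$-stability forces the components back into $\Gamma$) rules this out. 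Once finiteness is established the rest is the short linear-algebra argument above.
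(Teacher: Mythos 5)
Your argument is correct and follows essentially the same route as the paper: bound $\#\Gamma$ by the topological complexity of $S^2\setminus Q_f$ (equivalently, in terms of $\#(E)$), observe that since each entry of $A_\Gamma$ is a sum of at most $\deg f$ reciprocals of integers dividing $\deg f$ there are only finitely many possible matrices, note that no Thurston obstruction forces the spectral radius of each to be $< 1$, and take $k$ large enough to make all their $k$-th powers small in the max-column-sum norm. The only superfluous part is the worry about infinitely many isotopy types of curves — the argument only needs finiteness of the set of matrices, not of the set of multi-curves, which is exactly the point the paper's one-line proof makes.
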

\begin{proof}
Let $\Gamma =
 \{\gamma_{i}\}$  be a $f$-stable multi-curve with $\gamma_{i} \subset
S^{2} \setminus Q_{f}$. It is clear that the number of the elements
in $\Gamma$ has an upper bound which depends only on $\#(E)$. This
implies  that there can be only finitely many distinct $A_{\Gamma}$.
The lemma follows.
\end{proof}

Let $Z \subset S^{2}$ be a subset with $\#(Z) \ge 4$ and $\gamma
\subset S^{2} \setminus Z$ be a non-peripheral simple closed curve.
For $[\mu]\in T_{f}$, define
$$
w_{Z}(\gamma, [\mu]) = -\log \|\gamma\|_{\mu, Z}.
$$

By using the same argument as in the proof of Proposition 7.2 of
\cite{DH}, we have
\begin{lemma}~\label{lip}
Let $Z \subset S^{2}$ be a subset with $\#(Z) \ge 4$ and $\gamma
\subset S^{2} \setminus Z$ be a non-peripheral simple closed curve.
Then the function
$$
[\mu] \to w_{Z}(\gamma, [\mu]): T_{f}\to {\mathbb R}
$$
is  Lipschitz with Lipschitz constant $2$.
\end{lemma}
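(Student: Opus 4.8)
The plan is to reduce the statement to the corresponding fact about the Teichm\"uller metric, exactly as in Proposition 7.2 of \cite{DH}. Recall that for a finite set $Z$ with $\#(Z)\ge 4$ and a non-peripheral simple closed curve $\gamma\subset S^{2}\setminus Z$, the quantity $\|\gamma\|_{\mu,Z}$ is the hyperbolic length of the geodesic representative of $\phi_{\mu}(\gamma)$ in the Riemann surface $\Bbb{P}^{1}\setminus\phi_{\mu}(Z)$. The first step is to compare the hyperbolic structures attached to two points $[\mu],[\nu]\in T_{f}$. Choosing representatives $\mu',\nu'$ in the respective classes whose associated maps $\phi_{\mu'},\phi_{\nu'}$ satisfy $\tfrac12\log K(\phi_{\mu'}\circ\phi_{\nu'}^{-1})$ within $\epsilon$ of $d_{T}([\mu],[\nu])$ (possible by Definition~\ref{Tech-metric}), one observes that $\psi=\phi_{\mu'}\circ\phi_{\nu'}^{-1}$ is a $K$-quasiconformal homeomorphism between $\Bbb{P}^{1}\setminus\phi_{\nu'}(Z)$ and $\Bbb{P}^{1}\setminus\phi_{\mu'}(Z)$ carrying the free homotopy class of $\phi_{\nu'}(\gamma)$ to that of $\phi_{\mu'}(\gamma)$, with $\log K\le 2\,d_{T}([\mu],[\nu])+2\epsilon$.

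The second step is the standard quasiconformal distortion estimate for hyperbolic lengths of closed geodesics: if $\psi:X\to Y$ is $K$-quasiconformal between hyperbolic Riemann surfaces and $\gamma$ is a simple closed curve, then the hyperbolic length $\ell_{Y}$ of the geodesic in the class $\psi_{*}[\gamma]$ and the hyperbolic length $\ell_{X}$ of the geodesic in the class $[\gamma]$ satisfy $K^{-1}\ell_{X}\le \ell_{Y}\le K\,\ell_{X}$. This is most cleanly seen through extremal length: $K$-quasiconformal maps distort extremal length by at most a factor $K$, and the extremal length of a homotopy class of curves on a hyperbolic surface is comparable (in a two-sided way giving the clean factor $K$ after taking logs) to a monotone function of the hyperbolic geodesic length; alternatively one uses the collar/annulus description of Lemma~\ref{basic-1} together with the fact that moduli of annuli are distorted by at most $K$. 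Either route yields
$$
\big|\log\|\gamma\|_{\mu,Z}-\log\|\gamma\|_{\nu,Z}\big|\le \log K\le 2\,d_{T}([\mu],[\nu])+2\epsilon.
$$

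Letting $\epsilon\to 0$ and recalling $w_{Z}(\gamma,[\mu])=-\log\|\gamma\|_{\mu,Z}$, this gives $|w_{Z}(\gamma,[\mu])-w_{Z}(\gamma,[\nu])|\le 2\,d_{T}([\mu],[\nu])$, i.e.\ the Lipschitz constant is $2$. I expect the only genuinely delicate point to be pinning down the precise constant in the comparison between hyperbolic geodesic length and the quasiconformally-robust invariant (extremal length or annulus modulus): one must be careful that the comparison is two-sided with multiplicative error exactly $K$, rather than $K$ times an additive or dimensional constant, so that no extra slack is introduced. This is handled exactly as in \cite{DH}, and once that comparison is in hand the rest is the routine chain of inequalities above. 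The finiteness of $Z$ is used only to ensure $\Bbb{P}^{1}\setminus\phi_{\mu}(Z)$ is a hyperbolic Riemann surface carrying honest closed geodesics; the argument does not otherwise depend on $Z$.
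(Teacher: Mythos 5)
Your proof is correct and follows the same route as the paper, which simply invokes Proposition 7.2 of \cite{DH}: pick nearly-extremal representatives, observe $\log K \le 2d_{T}+2\epsilon$, and apply the $K$-quasiconformal distortion bound $K^{-1}\ell_{X}\le\ell_{Y}\le K\ell_{X}$ for hyperbolic geodesic lengths (Wolpert's lemma). One small caveat on the side remarks: the clean factor-$K$ bound is not really obtained via extremal length nor via the embedded annulus of Lemma~\ref{basic-1} (both introduce additive or non-multiplicative slack), but via the cyclic annular cover $A_{\gamma}$ of the surface associated to $\gamma$, whose modulus is exactly $\pi/\ell(\gamma)$ and is distorted by at most a factor $K$ under the lifted $K$-quasiconformal map.
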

Recall that $E = P_{1}\cup \cup_{i}\{a_{i}, b_{i}\}$. Let $[\mu]\in
T_{f}$ and $b$  be a real number. Define
$$
\Gamma_{ \mu}^{b} = \{ \gamma  \;|\: \gamma \hbox{ is a } (\mu,
E)\hbox{-simple closed geodesic
 }  \hbox{ with }w_{E}(\gamma, [\mu])\geq b\},
$$
and
$$
L_{\mu} = \{ w_{E}(\gamma, [\mu])\:\:\big{|}\:\: \gamma \hbox{ is a
} (\mu, E)\hbox{-simple closed geodesic} \}.
$$

\begin{lemma}~\label{gap1}
There exists an $A > -\log\log \sqrt 2$ such that for any $[\mu] \in
T_{f}$ and any real numbers $a < b$, if
\begin{itemize}
\item[1.] $a > A$,
\item[2.]  $b-a \ge  \log d + 2d_{T}([\mu], [f^{*}\mu]) + 1$,
\item[3.] $[a, b] \cap L_{\mu} = \emptyset$,
\item[4.] $\Gamma_{\mu}^{b}\neq \emptyset$,
\end{itemize}
 then $\Gamma_{\mu}^{b}$ is a $f$-stable multi-curve in $S^{2} \setminus Q_{f}$.
\end{lemma}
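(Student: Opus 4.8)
The goal is to show that under hypotheses 1--4, the family $\Gamma_{\mu}^{b}$ of short $(\mu,E)$-geodesics (those with $w_E(\gamma,[\mu])\ge b$, i.e.\ hyperbolic length $\le e^{-b}$) is an $f$-stable multi-curve in $S^{2}\setminus Q_f$. First I would record that by Lemma~\ref{collar}, if $A$ is chosen large enough (so that $e^{-b}\le e^{-a}<\eta$), every element of $\Gamma_{\mu}^{b}$ actually lies in $S^{2}\setminus Q_f$, and moreover $\|\gamma\|_{\mu,E}$ and $\|\gamma\|_{\mu,Q_f}$ are comparable up to a factor $(1-\epsilon)$; choosing $\epsilon$ small and absorbing it into the ``$+1$'' slack in hypothesis~2 lets me pass freely between the $E$-length and the $Q_f$-length. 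The elements of $\Gamma_{\mu}^{b}$ are pairwise disjoint and non-homotopic (distinct simple closed geodesics on a fixed hyperbolic surface), and non-peripheral by definition, so $\Gamma_{\mu}^{b}$ is a multi-curve; what remains is $f$-stability.

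\textbf{The core estimate.} For $f$-stability I must take $\gamma\in\Gamma_{\mu}^{b}$ and show that every non-peripheral component $\delta$ of $f^{-1}(\gamma)$ is homotopic in $S^{2}\setminus P_f$ to some element of $\Gamma_{\mu}^{b}$. The key is a length comparison: if $d=\deg f$, then the preimage $f^{-1}(\gamma)$, pushed to the appropriate complex structure, has each component $\delta$ satisfying $\|\delta\|_{\mu,Q_f}\le d\cdot\|\gamma\|_{\tilde\mu,Q_f}$ essentially because $f\colon (S^2,Q_f)\to(S^2,Q_f)$ lifts to the rational map $g$ between the corresponding punctured spheres, and a holomorphic covering of degree $\le d$ shrinks hyperbolic length of a lift of a geodesic by at most the factor $d$ (the lift of the geodesic is a curve, not necessarily the geodesic, so one only gets the inequality via the Schwarz--Pick contraction plus the degree bound on multiplicity). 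Here $\tilde\mu=f^{*}\mu$, and $d_T([\mu],[\tilde\mu])$ controls $\|\gamma\|_{\tilde\mu,\cdot}$ versus $\|\gamma\|_{\mu,\cdot}$ through the quasiconformal distortion: $\|\gamma\|_{\tilde\mu,Q_f}\le e^{2d_T([\mu],[\tilde\mu])}\|\gamma\|_{\mu,Q_f}$. Combining, $-\log\|\delta\|_{\mu,Q_f}\ge -\log\|\gamma\|_{\mu,Q_f}-\log d-2d_T([\mu],[f^{*}\mu])$, and then converting back to $E$-lengths via Lemma~\ref{collar} and using $w_E(\gamma,[\mu])\ge b$ gives $w_E(\delta,[\mu])> b - \log d - 2d_T([\mu],[f^{*}\mu]) - 1 \ge a$ by hypothesis~2. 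So the geodesic homotopic to $\delta$ has weight $>a$.

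\textbf{Using the gap.} Now hypothesis~3, that $[a,b]$ contains no value of $L_\mu$, forces a dichotomy: the geodesic $\delta^{*}$ homotopic to $\delta$ has $w_E(\delta^{*},[\mu])>a$, hence either $w_E(\delta^{*},[\mu])\ge b$ — in which case $\delta^{*}\in\Gamma_{\mu}^{b}$ and we are done — or $w_E(\delta^{*},[\mu])<a$, which is excluded since we just showed it exceeds $a$; the forbidden window $(a,b)$ is precisely what rules out the intermediate case. One must be slightly careful that $\delta$ is non-peripheral in $S^2\setminus P_f$ exactly when it is non-peripheral in $S^2\setminus E$ (peripheral components are discarded on both sides), and that the core curve we extract is genuinely a $(\mu,E)$-simple closed geodesic; these are routine given that $P_1\subset E$ and each $D_i$ contains the pair $\{a_i,b_i\}$. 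Finally, $\Gamma_{\mu}^{b}\ne\emptyset$ is hypothesis~4, so the multi-curve is nonempty and the $f$-stability is not vacuous.

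\textbf{Main obstacle.} The delicate point is the length-comparison inequality $\|\delta\|_{\mu,Q_f}\le d\,e^{2d_T([\mu],[f^{*}\mu])}\|\gamma\|_{\mu,Q_f}$ with the \emph{correct} constant $\log d$ (not $\log d^2$ or worse), and making sure the passage between the reference structures $\mu$ on the source and target spheres — which differ by the pull-back — is handled cleanly, together with the $E$-versus-$Q_f$ length conversions from Lemma~\ref{collar} all fitting inside the single unit of slack in hypothesis~2. I would set $A$ at the end, large enough that every length appearing is below the thresholds $\eta$ and $\delta$ of Lemma~\ref{collar} and below $\log\sqrt2$ so that $e^{-a}<\log\sqrt2$ makes the relevant geodesics lie in the thin part where the collar lemma applies; this is the reason for the stated lower bound $A>-\log\log\sqrt2$.
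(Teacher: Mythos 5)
Your overall strategy is the same as the paper's: use Lemma~\ref{collar} (for $A$ large) to place the short geodesics in $S^{2}\setminus Q_f$ and to compare $E$-lengths with $Q_f$-lengths up to the unit of slack, then run a covering-degree estimate plus a Lipschitz estimate to bound the drop in $w_E$ when pulling back by $f$, and finish with the gap condition forcing any preimage geodesic from $w_E>a$ all the way up to $w_E>b$. The dichotomy at the end is exactly how the paper concludes.

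However, the central chain of length comparisons is miswritten in a way that does not compile into a proof as stated. The commuting square is $g=\phi_{\mu}\circ f\circ \phi_{\tilde\mu}^{-1}$, so $g$ is a holomorphic covering from the $\tilde\mu$-structure with punctures $\phi_{\tilde\mu}(f^{-1}(Q_f))$ onto the $\mu$-structure with punctures $\phi_{\mu}(Q_f)$; a component $\delta$ of $f^{-1}(\gamma)$ corresponds, via $\phi_{\tilde\mu}$, to a lift of $\phi_{\mu}(\gamma)$ under $g$. The covering estimate therefore reads
$$
\|\delta\|_{\tilde\mu,\,f^{-1}(Q_f)}\;\le\; d\,\|\gamma\|_{\mu,\,Q_f},
$$
not $\|\delta\|_{\mu,Q_f}\le d\,\|\gamma\|_{\tilde\mu,Q_f}$ as you wrote: your version has both the reference complex structures and the puncture set on the wrong sides and does not follow from the degree bound. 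Likewise, the Lipschitz estimate (Lemma~\ref{lip}) must be applied to the lifted curve $\delta$, comparing $w_E(\delta,[\mu])$ with $w_E(\delta,[f^{*}\mu])$, not to $\gamma$ between the two structures. Once the indices are put in the right places, the chain becomes the paper's: pass from $E$ to $Q_f$ on the downstairs side (Lemma~\ref{collar}, cost $1$), apply the degree-$d$ covering bound upstairs (cost $\log d$), pass from $f^{-1}(Q_f)$ to the smaller set $E$ upstairs (free, by inclusion), then use Lemma~\ref{lip} to move from the $f^{*}\mu$-structure back to the $\mu$-structure (cost $2d_T([\mu],[f^{*}\mu])$). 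This yields $w_E(\gamma,[\mu])-w_E(\delta,[\mu])<\log d+2d_T([\mu],[f^{*}\mu])+1\le b-a$, after which your use of hypothesis~3 is correct.
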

\begin{proof}
Let $\gamma \in \Gamma_{\mu}^{b}$. By the first assertion of
Lemma~\ref{collar}, $\gamma$ is a non-peripheral and simple closed
curve in $S^{2} \setminus Q_{f}$ provided that $A$ is big and thus
$\|\gamma\|_{\mu, E}$ is small.  By the second assertion of
Lemma~\ref{collar}, we have
$$
w_{Q_{f}}(\gamma, [\mu]) > w_{E}(\gamma, [\mu]) - 1
$$
provided that $A$ is big and thus $\|\gamma\|_{\mu, E}$ is small.
Now suppose that $\gamma'$ is a non-peripheral component of
$f^{-1}(\gamma)$. Since $f$ is a degree $d$ branched covering map of
the sphere, it follows that
$$
w_{f^{-1}(Q_{f})}(\gamma', [f^{*}\mu]) \ge  w_{Q_{f}}(\gamma, [\mu])
- \log d.
$$
Since $E \subset f^{-1}(Q_{f})$, it follows that
$$
w_{E}(\gamma',[f^{*}\mu]) > w_{f^{-1}(Q_{f})}(\gamma', [f^{*}\mu]).
$$
By Lemma~\ref{lip}, we have
$$
w_{E}(\gamma', [\mu]) \ge w_{E}(\gamma', [f^{*}\mu]) - 2d_{T}([\mu],
[f^{*}\mu]).
$$
This implies that
$$
w_{E}(\gamma, [\mu]) - w_{E}(\gamma', [\mu]) < \log d +
2d_{T}([\mu], [f^{*}\mu]) + 1 \le  b-a.
$$
Since $w_{E}(\gamma, [\mu]) > b$ and $[a, b] \cap L_{\mu} =
\emptyset$, it follows that $w_{E}(\gamma', [\mu]) > b$. This
implies that $\gamma'$ is homotopic to some element in
$\Gamma_{\mu}^{b}$. The Lemma follows.
\end{proof}

Let $k \ge  0$ be the integer in Lemma~\ref{universal-k}.  Let
\begin{equation}\label{P-2}
P_{2} =   E \cup f^{k}(E) \cup \bigcup_{1\le j \le k}
f^{j}(\Omega_{f}).
\end{equation}
\begin{lemma}\label{comle-le}
There exists an $\epsilon_{0}> 0$ which is independent of $\mu$ such
that for any $(\mu, P_{2})$-simple closed geodesic $\eta$, if
$\|\eta\|_{\mu, P_{2}} \le \epsilon_{0}$, then there is a $(\mu,
E)$-simple closed geodesic $\gamma$ such that $\eta$ is homotopic to
$\gamma$ in $S^{2}\setminus P_{2}$.
\end{lemma}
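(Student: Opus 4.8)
The plan is to exploit that $E\subset P_2$ while $P_2\setminus E$ is buried inside the holomorphic disks. First I would record the structure of $P_2$: since $f(Q_f)\subset Q_f$, $E\subset f^{-1}(Q_f)$, and $f^j(\Omega_f)\subset P_f\subset Q_f$ for $1\le j\le k$, every point of $P_2\setminus E$ lies in $Q_f\setminus E\subset\overline{D_f}=\bigcup_i\overline{D_i}$. Write $X_\mu={\Bbb P}^1\setminus\phi_\mu(P_2)$ and $Y_\mu={\Bbb P}^1\setminus\phi_\mu(E)$, so $X_\mu\subset Y_\mu$; the claim is that a short $X_\mu$-geodesic $\eta$ is $X_\mu$-homotopic to the $Y_\mu$-geodesic representative $\gamma$ of its class. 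The point is that all the extra punctures of $X_\mu$ relative to $Y_\mu$ sit inside the disks $\phi_\mu(\overline{D_i})$, each shielded by the annulus $\phi_\mu(A_i)$. Because $\mu$ vanishes on $Q_f$ and on $\cup_iA_i$ (as it does for the iterated pull-backs to which the lemma is applied), $\phi_\mu$ is conformal on each $\overline{D_i}\cup A_i$; hence $\hbox{mod}(\phi_\mu(A_i))=\hbox{mod}(A_i)=:m_i$ is independent of $\mu$, and since $A_i\cap P_2=A_i\cap E=\emptyset$ the annulus $\phi_\mu(A_i)$ lies in both $X_\mu$ and $Y_\mu$. Its core $c_i$ encloses $\phi_\mu(\overline{D_i})$, which contains $\phi_\mu(a_i),\phi_\mu(b_i)\in\phi_\mu(E)$, while $0,1,\infty\in\phi_\mu(E)$ lie outside $\phi_\mu(\overline{D_i\cup A_i})$ (the disks being chosen small), so $c_i$ is non-peripheral in $X_\mu$ and in $Y_\mu$.

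Next I would pin down $\epsilon_0$ by a collar estimate. Since $\phi_\mu(A_i)$ is an embedded annulus of modulus $m_i$ homotopic to $c_i$, the $X_\mu$-geodesic $c_i^*$ in the class of $c_i$ has hyperbolic length at most $\pi/(2m_i)\le\pi/(2m_0)=:L_0$, where $m_0=\min_im_i>0$ is a fixed constant (cf. Lemma~\ref{basic-1}). Choose $\epsilon_0>0$ so small that $2w(\epsilon_0)>L_0$, where $w(\ell)$ is the collar half-width (so $w(\ell)\to\infty$ as $\ell\to 0$); this is the only place $\epsilon_0$ is fixed, and it depends only on $\{A_i\}$, not on $\mu$. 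Now let $\eta$ be a $(\mu,P_2)$-simple closed geodesic with $\|\eta\|_{\mu,P_2}\le\epsilon_0$. If $\eta$ had nonzero geometric intersection number with some $c_i^*$, then $c_i^*$ — a closed geodesic crossing the geodesic $\eta$ of length $\le\epsilon_0$ — would have to traverse the embedded collar of $\eta$, forcing $\ell(c_i^*)\ge 2w(\epsilon_0)>L_0\ge\ell(c_i^*)$, a contradiction. Hence $\eta$ is disjoint from, or equal to, each $c_i^*$: realizing the $c_i^*$ and $\eta$ simultaneously as disjoint simple geodesics, either $\eta=c_i^*$ for some $i$, or $\eta$ lies in a single component of $X_\mu\setminus\bigcup_ic_i^*$.

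It remains to produce in each case a $(\mu,E)$-geodesic $\gamma$ with $\eta\simeq\gamma$ in $S^2\setminus P_2$. In all cases $[\eta]_{X_\mu}$ is, combinatorially, ``the curve around a subset $S_1$ of $\phi_\mu(P_1)$ together with the disks $\phi_\mu(\overline{D_i})$ for $i$ in a subset $S_2$''; a point count (each such disk carries the two $E$-points $\phi_\mu(a_i),\phi_\mu(b_i)$, and the $P_1$-points already lie in $E$) shows each side of $\eta$ contains at least two points of $\phi_\mu(E)$, so $\eta$ is non-peripheral in $Y_\mu$ and has a $Y_\mu$-geodesic representative $\gamma$. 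Since $Y_\mu\supset X_\mu$, $\ell_{Y_\mu}(\gamma)\le\|\eta\|_{\mu,P_2}\le\epsilon_0$, so $\gamma$ is itself short in $Y_\mu$; running the collar argument of the previous paragraph inside $Y_\mu$ shows $\gamma$ is disjoint from, or equal to, each $Y_\mu$-geodesic in the class of $c_i$. Using this, conformal rigidity of $\phi_\mu$ on $\overline{D_i}\cup A_i$, and the definite modulus $m_i$ of $\phi_\mu(A_i)$, one checks that $\gamma$ meets no point of $\phi_\mu(P_2\setminus E)$ and separates these points exactly as $\eta$ does — i.e.\ $[\gamma]_{X_\mu}=[\eta]_{X_\mu}$, so the homotopy from $\eta$ to $\gamma$ can be carried out inside $S^2\setminus P_2$.

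The hard part is precisely this last step: showing that passing from $\eta$ to the $Y_\mu$-geodesic $\gamma$ of its class does not push $\gamma$ across any of the buried extra punctures of $P_2\setminus E$, so that the homotopy lives in $S^2\setminus P_2$ rather than merely in $S^2\setminus E$. Here one must use that $\phi_\mu(A_i)$ is a $\mu$-independent annulus of definite modulus surrounding the disk $\phi_\mu(\overline{D_i})$ that holds all the extra punctures of $P_2\cap\overline{D_i}$ and exactly the two $E$-points $\phi_\mu(a_i),\phi_\mu(b_i)$, so that a curve whose $Y_\mu$-class keeps $\phi_\mu(a_i)$ and $\phi_\mu(b_i)$ on one side is disjoint (up to isotopy) from $\phi_\mu(\overline{D_i})$ altogether; everything before this is a routine application of the collar lemma.
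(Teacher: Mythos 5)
Your proposal has a genuine gap, and it is precisely at the place where the paper's proof does real work.

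Your collar argument is fine as far as it goes: it shows that a short $(\mu,P_2)$-geodesic $\eta$ cannot cross the $X_\mu$-geodesic $c_i^*$ homotopic to the core of $\phi_\mu(A_i)$. But disjointness from $c_i^*$ does not force $\eta$ to lie in the ``big'' component of $X_\mu\setminus\bigcup_i c_i^*$ that contains $\{0,1,\infty\}$; $\eta$ could perfectly well lie in the small component $V_i$ on the $\phi_\mu(\overline{D_i})$-side of $c_i^*$. In that case your claim that ``$[\eta]_{X_\mu}$ is combinatorially the curve around a subset $S_1$ of $\phi_\mu(P_1)$ together with the disks $\phi_\mu(\overline{D_i})$ for $i\in S_2$'' is simply false, and the ensuing point count collapses: a curve in $V_i$ that separates two points of $\phi_\mu((P_2\setminus E)\cap D_i)$, or that separates $\phi_\mu(a_i)$ from the rest, has at most one point of $\phi_\mu(E)$ on one side, hence is peripheral or trivial in $Y_\mu$ and has no $(\mu,E)$-geodesic representative at all. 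Nothing in your argument rules out that such an $\eta$ has $\|\eta\|_{\mu,P_2}\le\epsilon_0$ — the collar argument only forbids $\eta$ from crossing $c_i^*$, not from living on its $D_i$-side.

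This is exactly the case the paper's proof addresses head-on, and it is where the shielding-ring construction earns its keep. The paper observes that since $\phi_\mu$ is univalent on $\overline{D_i}\cup A_i$ and $P_2\cap\overline{D_i}$ is a \emph{fixed} finite set (independent of $\mu$), Koebe's distortion theorem, applied after normalizing a M\"obius map to send two separated points of $D_i\cap P_2$ and one point of $P_2\setminus\overline{D_i}$ to $0,1,\infty$, gives a $\mu$-independent lower bound on the Euclidean diameter of any curve separating points of $D_i\cap P_2$, hence a $\mu$-independent lower bound on its hyperbolic length in $\mathbb{P}^1\setminus\{0,1,\infty\}$, hence on $\|\eta\|_{\mu,P_2}$. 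Choosing $\epsilon_0$ below that bound (and below the bound from the case where $\eta$ crosses $\phi_\mu(A_i)$, which the paper also settles by Koebe rather than by your collar estimate — either works there) excludes $\eta\subset V_i$ entirely. That distortion argument for the inside-the-disk case is the missing ingredient in your proof; without it, the ``point count'' and the subsequent ``hard part'' both rest on an unverified hypothesis. Also note that both your proof and the paper's implicitly use $\mu\equiv 0$ on $\bigcup_i A_i$ (so that $\phi_\mu$ is conformal on $\overline{D_i}\cup A_i$); this is harmless for the application but should be stated.
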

\begin{proof}
Suppose $\eta$ is not homotopic to any  $(\mu, E)$-simple closed
geodesic in $S^{2}\setminus P_{2}$. Then there is at least one
holomorphic disk $D_{i}$, such that $\gamma$  separate the points in
$D_{i} \cap P_{2}$. Let $x, y \in  D_{i} \cap P_{2}$ which are
separated by $\gamma$. Let $z \in P_{2} \setminus \overline{D_{i}}$.
Let $\phi:S^{2} \to {\Bbb P}^{1}$ be the homeomorphism which solves
the Beltrami equation given by $\mu$  and which maps $x$, $y$, and
$z$ respectively to $0$, $1$, and the infinity. Then there are two
cases.

In the first case,  $\phi(\gamma)$ is contained in
$\phi(\overline{D_{i}}\cup A_{i})$.  Note that $A_{i}$ is the
shielding ring attached to the outside of $D_{i}$.  Then
$\phi(\gamma)$ must enclose the $\phi$-images of at least two points
in $\overline{D_{i}}\cap P_{2}$.  Since $\phi$ is univalent in
$\overline{D_{i}}\cup A_{i})$, it follows from Koebe's distortion
theorem that there exist $R> 0$ and $D > 0$ independent of $\mu$
such that $\phi(\gamma) \cap \{z\:|z| \le R\} \ne \emptyset$ and the
Euclidean diameter of $\phi(\gamma)$ is greater than $D$. This,
together with Koebe's distortion theorem,  implies that the
hyperbolic length of $\phi(\gamma)$ in ${\Bbb P}^{1} \setminus \{0,
1, \infty\}$, and thus $\|\gamma\|_{\mu, P_{2}}$,  has a positive
lower bound independent of $\mu$.

In the second case, $\phi(\gamma)$ is not contained in
$\phi(\overline{D_{i}}\cup A_{i})$. Since $\phi(\gamma)$ separates
$\phi(x)$ and $\phi(y)$, it follows that $\phi(\gamma)$ must cross
through the annulus $\phi(A_{i})$. By Koebe's distortion theorem,
the annulus $\phi(A_{i})$ has definite thickness. This again implies
that there exist $R> 0$ and $D > 0$ independent of $\mu$ such that
$\phi(\gamma) \cap \{z\:|z| \le R\} \ne \emptyset$ and the Euclidean
diameter of $\phi(\gamma)$ is greater than $D$. Therefore, the
hyperbolic length of $\phi(\gamma)$ in ${\Bbb P}^{1} \setminus \{0,
1, \infty\}$, and thus $\|\gamma\|_{\mu, P_{2}}$,  has a positive
lower bound independent of $\mu$. The proof of the lemma is
completed.
\end{proof}

Note  that
\begin{equation}\label{covering}
f^{k}: S^{2} \setminus f^{-k}(P_{2}) \to S^{2} \setminus P_{2}
\end{equation}
is a covering map of degree $d^{k}$. Let $A > -\log\log \sqrt 2$ be
the constant in Lemma~\ref{gap1}.
\begin{lemma}~\label{ineq'}
Let   $B > A$. Then there exists a constant $M>0$ depending only on
the numbers $k$, $B$,  $\#(E)$, $\epsilon_{0}$, and the degree $d$
of $f$, such that for any $[\mu] \in T_{f}$ and any real numbers $a
< b$, if
\begin{itemize}
\item[1.] $A < a < B$,
\item[2.] $b -a \ge \log d + 2 d_{T}([\mu], [f^{*}\mu]) + 1$,
\item[3.] $[a,  b]\cap L_{\mu} = \emptyset$,
\item[4.] $\Gamma_{\mu}^{b}\ne \emptyset$,
\end{itemize}
then
$$
 \sum_{\gamma\in \Gamma_{\mu}^{b}}
\frac{1}{\|\gamma\|_{\nu, E}} \le \frac{1}{2} \sum_{\gamma\in
\Gamma_{\mu}^{b}} \frac{1}{\|\gamma\|_{\mu, E}}  + M,
$$
where $\nu = ({f^{k}})^{*}(\mu)$ and $k \ge 0$ is the integer in
Lemma~\ref{universal-k}.
\end{lemma}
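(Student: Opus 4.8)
The plan is to estimate how hyperbolic lengths of the geodesics in $\Gamma_\mu^b$ behave under the degree-$d^k$ covering $f^k\colon S^2\setminus f^{-k}(P_2)\to S^2\setminus P_2$, and then to translate this back and forth between the reference sets $E$, $P_2$, and $f^{-k}(P_2)$ using the comparison lemmas already proved. First I would apply Lemma~\ref{gap1} to conclude that, under hypotheses (1)--(4), the family $\Gamma_\mu^b$ is an $f$-stable multi-curve contained in $S^2\setminus Q_f$; since $\Gamma_\mu^b$ is also $f^k$-stable, Lemma~\ref{universal-k} applies to its transition matrix $A_{\Gamma_\mu^b}$ and gives $\max_j\sum_i b_{ij}<1/2$ for $A_{\Gamma_\mu^b}^k=(b_{ij})$. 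Each $\gamma\in\Gamma_\mu^b$ is short with respect to $(\mu,E)$ (because $a>A$ is large), hence, thinking of the components of $(f^k)^{-1}(\gamma)$, the standard fact that a degree-$d_{i,j,\alpha}$ covering multiplies hyperbolic length of the core geodesic essentially by $1/d_{i,j,\alpha}$ lets me bound $\sum_{\gamma}\|\gamma\|_{\nu,f^{-k}(P_2)}^{-1}$ in terms of $\sum_\gamma \|\gamma\|_{\mu,P_2}^{-1}$ via the matrix $A_{\Gamma_\mu^b}^k$, up to an additive error controlled by the finitely many geodesics of length between $a$ and the next level in $L_\mu$ above $b$. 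This is where the factor $1/2$ enters.

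Next I would replace the auxiliary reference set $P_2$ by $E$ on both sides. For the domain side this uses Lemma~\ref{comle-le}: for $\eta$ a $(\mu,P_2)$-simple closed geodesic that is $\epsilon_0$-short, $\eta$ is homotopic in $S^2\setminus P_2$ to a $(\mu,E)$-geodesic, so $\|\gamma\|_{\mu,P_2}$ and $\|\gamma\|_{\mu,E}$ are comparable up to a multiplicative constant depending only on $\#(E)$ and $\epsilon_0$ (for the long geodesics, which are bounded away from $0$ in length, the contribution to $\sum\|\gamma\|^{-1}$ is absorbed into the additive constant $M$). On the range side I use that $E\subset f^{-k}(P_2)$ together with $P_2\supset E\cup f^k(E)\cup\bigcup_{1\le j\le k}f^j(\Omega_f)$, so that the covering $f^k$ restricted to the relevant region behaves well; monotonicity of hyperbolic length under inclusion of the puncture set ($\|\gamma\|_{\nu,E}\ge\|\gamma\|_{\nu,f^{-k}(P_2)}$) gives the inequality in the needed direction only after one more comparison, so I would in fact pass through $f^{-k}(P_2)$ and then invoke a collar-type estimate as in Lemma~\ref{collar} (whose proof only used that $\mu$ is holomorphic on the shielding rings, which remains true here) to compare $\|\gamma\|_{\nu,E}$ with $\|\gamma\|_{\nu,f^{-k}(P_2)}$ with multiplicative error $(1-\epsilon)$; choosing $\epsilon$ small relative to the gap $1/2$ keeps the coefficient on the main term below $1$.

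Finally I would collect the multiplicative constants: the passages $P_2\leftrightarrow E$ contribute bounded factors, the collar comparison contributes a factor arbitrarily close to $1$, and the matrix estimate contributes the genuine gain $1/2$; absorbing the finitely-many-long-geodesics contributions and all fixed constants into a single $M$ depending only on $k$, $B$, $\#(E)$, $\epsilon_0$, and $d$ yields
$$
\sum_{\gamma\in\Gamma_\mu^b}\frac{1}{\|\gamma\|_{\nu,E}}\ \le\ \frac12\sum_{\gamma\in\Gamma_\mu^b}\frac{1}{\|\gamma\|_{\mu,E}}+M .
$$
The main obstacle I anticipate is the bookkeeping of reference sets: the clean $1/2$ lives at the level of the $f^k$-covering with puncture set $P_2$ and its preimage, while the statement is phrased entirely in terms of $E$, and one must be careful that switching reference sets does not silently introduce a multiplicative constant larger than $2$ on the main term. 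The devices that make this work are precisely Lemma~\ref{comle-le} (short $(\mu,P_2)$-geodesics come from $(\mu,E)$-geodesics) and the collar estimate Lemma~\ref{collar} (short geodesics avoid $Q_f$ and their lengths with respect to $E$ and with respect to finer punctures agree up to $(1-\epsilon)$), together with the fact that the condition $a>A$ forces all geodesics in $\Gamma_\mu^b$ into the regime where these comparison lemmas apply.
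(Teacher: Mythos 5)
Your overall architecture matches the paper's: Lemma~\ref{gap1} to get $f$-stability, Lemma~\ref{universal-k} to supply the factor $1/2$, the covering $f^k$ with respect to the puncture sets $P_2$ and $f^{-k}(P_2)$, then translate back to $E$. However, there is a concrete gap in how you translate the upstairs reference set from $f^{-k}(P_2)$ to $E$, and the argument as written does not close.

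First, a small point: the monotonicity is stated backwards. Since $E\subset f^{-k}(P_2)$, the surface $S^2\setminus E$ contains $S^2\setminus f^{-k}(P_2)$, so $\|\gamma\|_{\nu,E}\le\|\gamma\|_{\nu,f^{-k}(P_2)}$, not the other way. Thus $\frac{1}{\|\gamma_i\|_{\nu,E}}\ge\frac{1}{\|\gamma_{i,j,\alpha}\|_{\nu,f^{-k}(P_2)}}$, which is the \emph{wrong} direction for what you need on the range side. (Relatedly, the covering $g=\phi_\mu\circ f^k\circ\phi_\nu^{-1}$ is a local isometry, so $\|\gamma_{i,j,\alpha}\|_{\nu,f^{-k}(P_2)}=d_{i,j,\alpha}\|\gamma_j\|_{\mu,P_2}$ exactly --- length is \emph{multiplied} by $d_{i,j,\alpha}$, not by $1/d_{i,j,\alpha}$, though the reciprocal ends up correct.)

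The genuine gap is the step that replaces $\frac{1}{\|\gamma_i\|_{\nu,E}}$ by something involving $\sum_{j,\alpha}\frac{1}{\|\gamma_{i,j,\alpha}\|_{\nu,f^{-k}(P_2)}}$. You propose a ``collar-type estimate as in Lemma~\ref{collar}'' giving a multiplicative comparison $(1-\epsilon)$ between $\|\gamma\|_{\nu,E}$ and $\|\gamma\|_{\nu,f^{-k}(P_2)}$. This cannot work for two reasons. (i) Lemma~\ref{collar} is tailored to the specific geometry of $E$ versus $Q_f$: the added ``stuff'' consists of disks $D_i$ trapped inside shielding rings $A_i$ on which $\mu$ is holomorphic, which forces the collar of a short $(\mu,E)$-geodesic to avoid the disks. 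The set $f^{-k}(P_2)\setminus E$ is just a bounded collection of extra punctures that can land anywhere relative to $\gamma_i$, and nothing like the shielding-ring argument is available. (ii) More fundamentally, a short $(\nu,E)$-geodesic $\gamma_i$ need not determine a single isotopy class in $S^2\setminus f^{-k}(P_2)$: the extra punctures can split it into several classes, which is exactly why the $\gamma_{i,j,\alpha}$'s appear. What one actually needs is the additive estimate from Theorem~7.1(c) of Douady--Hubbard:
$$
\frac{1}{\|\gamma_i\|_{\nu,E}}\ \le\ \sum_{j}\sum_{\alpha}\frac{1}{\|\gamma_{i,j,\alpha}\|_{\nu,f^{-k}(P_2)}}\ +\ \frac{2}{\pi}\ +\ \frac{C(k,d,\#(E))+1}{\min\{e^{-B},\epsilon_0\}},
$$
where the additive constant absorbs the boundedly many removed punctures and the geodesics of not-too-small length; the ``not-too-small'' threshold $\min\{e^{-B},\epsilon_0\}$ is established via the gap condition $[a,b]\cap L_\mu=\emptyset$ together with Lemma~\ref{comle-le}. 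Your proposal never invokes this estimate, and the collar substitute cannot replace it.

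Finally, on the domain side you do not need (and should not rely on) a multiplicative comparison between $\|\gamma\|_{\mu,P_2}$ and $\|\gamma\|_{\mu,E}$ via Lemma~\ref{comle-le}; no such uniform multiplicative bound holds. The paper only needs the one-sided monotone inequality $\|\gamma_j\|_{\mu,P_2}>\|\gamma_j\|_{\mu,E}$ (from $E\subset P_2$), which already goes the right direction. Lemma~\ref{comle-le} is used elsewhere, in proving the lower length bound for the additive error term.
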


\begin{proof}

By Lemma~\ref{gap1},  $\Gamma_{\mu}^{b}$ is a $f$-stable multi-curve
in $S^{2} \setminus Q_{f}$. For each $\gamma_{j}\in
\Gamma_{\mu}^{b}$, let $\gamma_{i,j,\alpha}$ be any component of
$f^{-k}(\gamma_{j})$ homotopic to $\gamma_{i}$ in $S^{2} \setminus
Q_{f}$. Then $\gamma_{i,j,\alpha}$ is also homotopic to $\gamma_{i}$
in $S^{2} \setminus E$.

Let $g = \phi_{\mu} \circ f^{k} \circ \phi_{\nu}^{-1}$. Then $g$ is
a rational map. It follows from (\ref{covering}) that
$$
g: {\mathbb P}^{1}\setminus\phi_{\nu} (f^{-k}(P_{2}))\to {\mathbb
P}^{1}\setminus\phi_{\mu}(P_{2})
$$
is a holomorphic covering map, and therefore,
$$
\|\gamma_{i,j,\alpha}\|_{\nu, f^{-k}(P_{2})} = d_{i,j,\alpha}
\|\gamma_{j}\|_{\mu, P_{2}}
$$
where $d_{i,j,\alpha}\leq d^{k}$ is the degree of
$$
f^{k}: \gamma_{i,j,\alpha}\to \gamma_{j}.
$$
Thus
$$
\sum_{\alpha} \frac{1}{\|\gamma_{i,j,\alpha}\|_{\nu, f^{-k}(P_{2})}}
= \Big( \sum_{\alpha} \frac{1}{d_{i,j,\alpha}}\Big) \frac{1}{
\|\gamma_{j}\|_{\mu, P_{2}}} =b_{ij} \frac{1}{ \|\gamma_{j}\|_{\mu,
P_{2}}}
$$

Since $E \subset P_{2}$ by (\ref{P-2}), it follows that
$\|\gamma_{j}\|_{\mu, P_{2}} > \|\gamma_{j}\|_{\mu,E}$, and
therefore
$$
\frac{1}{\|\gamma_{j}\|_{\mu, P_{2}}} <
\frac{1}{\|\gamma_{j}\|_{\mu,E}}.
$$
This implies
\begin{equation}\label{imme-in}
\sum_{\alpha} \frac{1}{\|\gamma_{i,j,\alpha}\|_{\nu, f^{-k}(P_{2})}}
< b_{ij} \frac{1}{ \|\gamma_{j}\|_{\mu,E}}
\end{equation}

Note that $E \subset f^{-k}(P_{2})$ by (\ref{P-2}). Let $p$ denote
the number of the points in $f^{-k}(P_{2})\setminus E$. It follows
from (\ref{P-2}) that there is  a constant $$0< C(k, d, \#(E))<
\infty$$ depending only on $d$, $k$, and $\#(E)$ such that $ p \le
C(k, d, \#(E))$.

Now we claim that for any $(\nu, f^{-k}(P_{2}))$-simple closed
geodesic $\gamma$ which is homotopic to $\gamma_{i}$ in $S^{2}
\setminus E$, either $\gamma$ is homotopic to  some $\gamma_{i, j,
\alpha}$ in $S^{2} \setminus f^{-k}(P_{2})$, or
$$\|\gamma\|_{\nu, f^{-k}(P_{2})} >  \min\{e^{-B}, \epsilon_{0}\}.$$
Let us prove the claim. In fact, if $\gamma$ is not homotopic in
$S^{2} \setminus f^{-k}(P_{2})$ to some $\gamma_{i, j, \alpha}$,
then $f^{k}(\gamma)$ is a $(\mu, P_{2})$-simple closed geodesic
which is not homotopic to any $\gamma_{j}$ in $S^{2} \setminus
P_{2}$.  There are two cases. In the first case, $f^{k}(\gamma)$ is
 homotopic in $S^{2} \setminus P_{2}$ to some $(\mu, E)$-simple closed geodesic $\xi$
 which does not belong to $\Gamma_{\mu}^{b}$.
 By the assumption that $L_{\mu} \cap [a, b] = \emptyset$, we have
 $$
  \|f^{k}(\gamma)\|_{\mu, P_{2}} > \|f^{k}(\gamma)\|_{\mu, E} = \|\xi\|_{\mu, E} > e^{-a} > e^{-B}.
 $$
In the second case, $f^{k}(\gamma)$ is not homotopic in $S^{2}
\setminus P_{2}$ to any  $(\mu, E)$-simple closed geodesic. By
Lemma~\ref{comle-le}, we have
$$
 \|f^{k}(\gamma)\|_{\mu, P_{2}} > \epsilon_{0}.
$$
We thus have
$$
\|\gamma\|_{\nu, f^{-k}(P_{2})} \ge  \|f^{k}(\gamma)\|_{\mu, P_{2}}
> \min\{e^{-B}, \epsilon_{0}\}.
$$

Now  from the left hand of the inequality given by (c) in Theorem
7.1 of \cite{DH}, we have
$$
\frac{1}{\| \gamma_{i}\|_{\nu,E}} \leq \sum_{j}\sum_{\alpha}
\frac{1}{\|\gamma_{i,j,\alpha}\|_{\nu, f^{-k}(P_{2})}} +
\frac{2}{\pi} + \frac{C(k, d, \#(E))+1}{\min\{e^{-B},
\epsilon_{0}\}}.
$$
Let $$ M' = \frac{2}{\pi} + \frac{C(k, d, \#(E))+1}{\min\{e^{-B},
\epsilon_{0}\}}.$$ Thus
$$
 \sum_{\gamma\in \Gamma_{\mu}^{b}}
\frac{1}{\|\gamma\|_{\nu, E}} \leq \sum_{i}\sum_{j}\sum_{\alpha}
\frac{1}{\|\gamma_{i,j,\alpha}\|_{\nu, f^{-k}(P_{2})}} + KM'.
$$
where $K$ is the number of the curves in $\Gamma$ which is bounded
above by $\#(E)-3$. Let $$M =(\#(E)-3) M'.$$ By (\ref{imme-in}), we
have
$$
\sum_{\gamma\in \Gamma_{\mu}^{b}} \frac{1}{\|\gamma\|_{\nu, E}} \leq
\sum_{j} \Big( \sum_{i} b_{ij}\Big) \frac{1}{
\|\gamma_{j}\|_{\mu,E}} +M \leq \frac{1}{2}\sum_{\gamma\in
\Gamma_{\mu}^{b}} \frac{1}{\|\gamma\|_{\mu, E}} +M.
$$
This completes the proof of the Lemma.
\end{proof}

The following is a technical lemma from Calculus.

\begin{lemma}~\label{cal}
Let  $b_{0}> 1$, $c_{0}, M_{0}> 0$, and  integer $m_{0}> 1$ be
given. Then for any sequence $\{x_{n}\}_{n=0}^{\infty}$ of positive
numbers satisfying
\begin{enumerate}
\item $x_{0}\leq c_{0}$,
\item $x_{n+1}/x_{n}\leq b_{0}$,
\item if $x_{n}\geq M_{0}$, then $x_{n+m_{0}}\leq x_{n}$,
\end{enumerate}
one has
$$
x_{n} \le \max\{ b_{0}^{m_{0}-1}c_{0}, b_{0}^{m_{0}}M_{0}\}, \quad
\forall n\geq 0.
$$
\end{lemma}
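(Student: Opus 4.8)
The plan is to argue by contradiction via a minimal counterexample. Set
$$
B = \max\{\,b_0^{\,m_0-1}c_0,\ b_0^{\,m_0}M_0\,\}.
$$
First I would record the elementary consequence of hypothesis (2): since the $x_n$ are positive, $x_{n+1}\le b_0 x_n$, and iterating gives $x_{n+j}\le b_0^{\,j}x_n$ for all $n\ge 0$, $j\ge 0$. Because $b_0>1$, this already handles the initial segment: for $0\le n\le m_0-1$ we get $x_n\le b_0^{\,n}x_0\le b_0^{\,m_0-1}c_0\le B$. So the asserted bound holds for the first $m_0$ terms, and any index where it fails must be at least $m_0$.

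Next I would suppose, for contradiction, that $\{\,n:\ x_n>B\,\}$ is nonempty and let $n$ be its least element; by the previous step $n\ge m_0$, so $n-m_0\ge 0$ is a legitimate index, and by minimality of $n$ we have $x_{n-m_0}\le B$. Now split according to hypothesis (3). If $x_{n-m_0}\ge M_0$, then (3) applied at the index $n-m_0$ gives $x_n\le x_{n-m_0}\le B$, contradicting $x_n>B$. If instead $x_{n-m_0}<M_0$, then the $m_0$-fold iterate of (2) gives $x_n\le b_0^{\,m_0}x_{n-m_0}<b_0^{\,m_0}M_0\le B$, again a contradiction. Hence no such $n$ exists, i.e.\ $x_n\le B$ for all $n\ge 0$, which is precisely the claimed inequality.

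There is essentially no hard step here; the only things to watch are that the minimal bad index is forced to be $\ge m_0$ (supplied by the growth estimate on the initial segment) and that in the second case one must use the full $m_0$-step iterate of (2) rather than a single step. I would also double-check the boundary case $n=m_0$, where $n-m_0=0$ and one uses $x_0\le c_0$: both sub-cases go through verbatim, since $c_0\le b_0^{\,m_0-1}c_0\le B$ as $m_0>1$.
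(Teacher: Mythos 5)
Your proof is correct and is essentially the paper's argument repackaged: the paper runs a direct induction along each arithmetic progression $\{i+lm_0\}$, while you use a minimal-counterexample formulation, but both rest on the identical case split (whether $x_{n-m_0}$ is $\ge M_0$ or $< M_0$) and the same two estimates. No gaps.
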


\begin{proof}
Let $C = \max\{ b_{0}^{m_{0}-1}c_{0}, b_{0}^{m_{0}}M_{0}\}$.   It is
sufficient to prove that $$x_{i+ lm_{0}} \le C$$ for all $0 \le i
\le m_{0} -1$ and $l \ge 0$.  Take an arbitrary integer $0 \le i \le
m_{0} -1$.  Let us prove that $$x_{i+ lm_{0}} \le C$$ for all $l \ge
0$ by induction. For $l = 0$, we have
$$
x_{i} \le  b_{0}^{i}x_{0} \le  b_{0}^{m_{0} -1}c_{0} \le  C.
$$
Now assume that
\begin{equation}\label{assump}
x_{i+ km_{0}} \le C
\end{equation}
for some integer $k \ge 0$. Let us prove that
$$
x_{i+ (k+1)m_{0}} \le  C.
$$
In fact, there are two cases by assumption (\ref{assump}). In the
first case, $x_{i + km_{0}} <  M$. In this case, we have
$$
x_{i+ (k+1)m_{0}} \le  b_{0}^{m_{0}} x_{i + km_{0}}  < b_{0}^{m_{0}}
M \le  C.
$$
In the second case, $x_{i + km_{0}} \ge M$. Then we have
$$
x_{i+ (k+1)m_{0}}  \le x_{i + km_{0}}  \le  C.
$$
This proves that $x_{i+ lm_{0}} \le C$ for all $l \ge 0$. Since this
holds for any $0 \le i \le m_{0} -1$, the lemma follows.
\end{proof}

\begin{lemma}~\label{bg'} If $f$ has no Thurston obstructions,
then for any $[\mu_{0}]\in T_{f}$, there exists a constant $b>0$
such that for all $n \ge 1$,
$$
[\mu_{n}] \in  T_{f, b},
$$
where $\mu_{n} = (f^{*})^{n}(\mu_{0})$.
\end{lemma}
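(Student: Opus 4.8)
The strategy is to reduce the statement, via Lemma~\ref{gdb1}, to a uniform lower bound on the hyperbolic lengths of short simple closed geodesics, and then to extract such a bound from Lemma~\ref{ineq'} with the help of the calculus Lemma~\ref{cal}. I first record three preliminary facts. (i) For $n\ge 1$ the coefficient $\mu_n$ vanishes on $\cup_i A_i$: since $f$ is holomorphic on each $\overline{D_i}\cup A_i$ and carries $A_i$ into some $D_j\subset Q_f$, formula (\ref{pull-back-coefficients}) forces $f^{*}(\text{Ext}(\mu_{n-1}))\equiv 0$ on $A_i$; the same computation on $\overline{D_f}$ shows $\text{Ext}(\mu_n)=(f^{*})^n(\text{Ext}(\mu_0))$, so that $(f^k)^{*}\mu_n=\mu_{n+k}$. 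In particular Lemmas~\ref{collar}, \ref{gdb1}, \ref{gap1} and \ref{ineq'} all apply to the $\mu_n$ with $n\ge 1$. (ii) By Corollary~\ref{key-6} together with Lemma~\ref{infinitesimal-metric}, $\sigma_f$ is $1$-Lipschitz for $d_T$, whence $d_T([\mu_n],[\mu_{n+1}])\le D_0:=d_T([\mu_0],[\mu_1])$ and $d_T([\mu_n],[\mu_{n+k}])\le kD_0$ for all $n$. (iii) By Lemma~\ref{gdb1} it suffices to find a constant $a^{*}>0$, independent of $n$, with $\|\gamma\|_{\mu_n,E}\ge a^{*}$ for every $(\mu_n,E)$-simple closed geodesic; and since any such $\gamma$ with $w_E(\gamma,[\mu_n])<A$ already satisfies $\|\gamma\|_{\mu_n,E}>e^{-A}$, it is enough to bound, uniformly in $n\ge 1$,
$$
x_n\ :=\ \sum_{\gamma\in\Gamma_{\mu_n}^{A}}\frac{1}{\|\gamma\|_{\mu_n,E}}\ +\ N e^{A},
$$
where $N=\#(E)-3$ (if $N=0$ there is nothing to prove). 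By the collar lemma $\Gamma_{\mu_n}^{A}$ is a multi-curve, so this is a finite sum of at most $N$ terms and each $x_n$ is finite.

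The heart of the argument is the recursion $x_{n+k}\le\frac12 x_n+M'$ with $M'$ independent of $n$, where $k$ is the integer of Lemma~\ref{universal-k}. Put $\ell_0:=\log d+2D_0+1$, fix a large constant $B>A$ (so large that $(A,B)$ contains $N+1$ pairwise disjoint closed intervals of length $\ell_0$), and set $B':=B+2kD_0$. Given $n\ge 1$: since $\Gamma_{\mu_n}^{A}$ is a multi-curve, $L_{\mu_n}\cap(A,B)$ has at most $N$ points, so one of those $N+1$ intervals, say $[a_n,b_n]$, misses $L_{\mu_n}$; then $A<a_n<b_n\le B$, $b_n-a_n=\ell_0\ge\log d+2d_T([\mu_n],[f^{*}\mu_n])+1$, and $[a_n,b_n]\cap L_{\mu_n}=\emptyset$. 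If $\Gamma_{\mu_n}^{b_n}\ne\emptyset$, Lemma~\ref{gap1} makes $\Gamma_{\mu_n}^{b_n}$ an $f$-stable multi-curve in $S^2\setminus Q_f$, and Lemma~\ref{ineq'} applied to $\mu_n$ (with $\nu=(f^k)^{*}\mu_n=\mu_{n+k}$) gives
$$
\sum_{\gamma\in\Gamma_{\mu_n}^{b_n}}\frac{1}{\|\gamma\|_{\mu_{n+k},E}}\ \le\ \frac12\sum_{\gamma\in\Gamma_{\mu_n}^{b_n}}\frac{1}{\|\gamma\|_{\mu_n,E}}+M\ \le\ \frac12 x_n+M.
$$
On the other hand, by Lemma~\ref{lip} and $d_T([\mu_n],[\mu_{n+k}])\le kD_0$, every $\gamma$ with $w_E(\gamma,[\mu_{n+k}])\ge B'$ has $w_E(\gamma,[\mu_n])\ge B'-2kD_0=B\ge b_n$, i.e.\ $\Gamma_{\mu_{n+k}}^{B'}\subset\Gamma_{\mu_n}^{b_n}$ as collections of homotopy classes; while every $\gamma\in\Gamma_{\mu_{n+k}}^{A}\setminus\Gamma_{\mu_{n+k}}^{B'}$ satisfies $1/\|\gamma\|_{\mu_{n+k},E}<e^{B'}$. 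Adding these contributions, $x_{n+k}\le\frac12 x_n+M+Ne^{B'}+Ne^{A}$. If instead $\Gamma_{\mu_n}^{b_n}=\emptyset$, then also $\Gamma_{\mu_{n+k}}^{B'}=\emptyset$, so $x_{n+k}\le Ne^{B'}+Ne^{A}$. Either way $x_{n+k}\le\frac12 x_n+M'$ with $M':=M+Ne^{B'}+Ne^{A}$.

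It remains to apply Lemma~\ref{cal} to $\{x_n\}_{n\ge 1}$: condition (1) is $x_1<\infty$; condition (2), $x_{n+1}/x_n\le b_0$ for some $b_0$, follows from Lemma~\ref{lip}, the bound $d_T([\mu_n],[\mu_{n+1}])\le D_0$, and the normalization $x_n\ge Ne^{A}$ (which absorbs the geodesics whose $w_E$-value slips below $A$ between steps $n$ and $n+1$); condition (3), $x_n\ge 2M'\Rightarrow x_{n+k}\le x_n$, is immediate from the recursion. Lemma~\ref{cal} then gives a constant $C$ with $x_n\le C$ for all $n\ge 1$. Consequently every $\gamma\in\Gamma_{\mu_n}^{A}$ has $\|\gamma\|_{\mu_n,E}\ge 1/C$, every other $(\mu_n,E)$-simple closed geodesic has length $>e^{-A}$, and Lemma~\ref{gdb1} applied with $a^{*}=\min(1/C,e^{-A})$ yields $[\mu_n]\in T_{f,b}$ for a $b>0$ depending only on $a^{*}$, hence independent of $n$.

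The step I expect to require the most care is the one turning Lemma~\ref{ineq'} — which only bounds the old multi-curve $\Gamma_{\mu_n}^{b_n}$ measured in the new metric $\mu_{n+k}$ — into an honest recursion for the single quantity $x_n$: one must rule out ``new'' very short geodesics appearing at time $n+k$, and it is precisely here that the estimate $d_T([\mu_n],[\mu_{n+k}])\le kD_0$, a consequence of the $1$-Lipschitz property of $\sigma_f$ in Corollary~\ref{key-6}, is used (via the Lipschitz estimate of Lemma~\ref{lip}) to force $\Gamma_{\mu_{n+k}}^{B'}\subset\Gamma_{\mu_n}^{b_n}$. The remaining details — the pigeonhole choice of the gap $[a_n,b_n]$, the verification of the three hypotheses of Lemma~\ref{cal}, and the passage from an upper bound on $x_n$ back to bounded geometry through Lemma~\ref{gdb1} — are routine.
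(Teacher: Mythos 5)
Your proof is correct, and it takes a genuinely different (and in one respect cleaner) route than the paper's at the key combinatorial step. The paper sets $x_n=\max_\gamma\|\gamma\|_{\mu_n,E}^{-1}$, so Lemma~\ref{ineq'} — which controls a \emph{sum} of reciprocal lengths — has to be iterated $l_0$ times (over $m_0=kl_0$ steps, with nested gap intervals $[a_i,b_i]$ shrinking by $2kD$ at each stage and a correspondingly inflated initial width $k_1=k_0+4m_0D+1$) until the cumulative factor $2^{-l_0}$ beats the crude inequality $\max\le\sum\le K\max$; this forces the auxiliary integer $l_0$ with $K<2^{l_0-1}$ and the larger period $m_0$. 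You instead track the sum itself, $x_n=\sum_{\gamma\in\Gamma_{\mu_n}^A}\|\gamma\|_{\mu_n,E}^{-1}+Ne^A$, so a \emph{single} application of Lemma~\ref{ineq'} over $k$ steps already gives the contraction $x_{n+k}\le\frac12 x_n+M'$, and the additive normalization $Ne^A$ both keeps $x_n$ bounded away from zero (for the ratio bound in condition (2) of Lemma~\ref{cal}) and absorbs the at most $N$ homotopy classes whose $w_E$-value crosses the threshold $A$ from one step to the next. You also correctly identify — and handle via the inclusion $\Gamma_{\mu_{n+k}}^{B'}\subset\Gamma_{\mu_n}^{b_n}$, which follows from the $2$-Lipschitz estimate of Lemma~\ref{lip} and $d_T([\mu_n],[\mu_{n+k}])\le kD_0$ — the genuine subtlety that Lemma~\ref{ineq'} only bounds the \emph{old} multicurve in the new metric and says nothing directly about new short geodesics. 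The only blemish is cosmetic: Lemma~\ref{cal} is stated for integers $m_0>1$, while your recursion has period $m_0=k$, which could equal $1$ if the $k$ of Lemma~\ref{universal-k} is $1$; the proof of Lemma~\ref{cal} is indifferent to this (or you can replace $k$ by $2k$ and apply Lemma~\ref{ineq'} twice), so this is not a real gap. Your variant trades the paper's explicit iteration for a normalization constant, yielding a shorter argument with simpler bookkeeping of the gap intervals; the paper's version is arguably more elementary in that it isolates the single ``worst'' geodesic, but at the cost of the extra parameter $l_0$.
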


\begin{proof}
Since $f$ is holomorphic on $\cup A_{i}$ and $f(\cup A_{i}) \subset
\cup D_{i}$, it follows that  for all $n \ge 1$, $\mu_{n}(z) = 0$ on
$\cup A_{i}$. By Lemma~\ref{gdb1}, it is equivalent to prove that
there is a uniform positive lower bound of the length of all the
$(\mu_{n}, E)$-simple closed geodesics.

Let $D=d_{T}([\mu_{0}], [\mu_{1}])$. Then by
Lemma~\ref{infinitesimal-metric} and  Corollary~\ref{key-3}, we have
$$
d_{T}([\mu_{n}], [\mu_{n+1}])\leq D \quad \hbox{for all}\:  n \geq
0.
$$

Let $K = \#(E) -3 \ge 1$ and  $k \ge 1$ be the integer in
Lemma~\ref{universal-k}.  Let $l_{0}\geq 1$ be the least integer
such that
\begin{equation}\label{K}
K < 2^{l_{0}-1}.
\end{equation}
Now it is sufficient to prove that there exist positive constants
$c_{0}, M_{0} > 0$,  $b_{0} > 1$, and an integer $m_{0} > 0$,  such
that
 the sequence
$$
x_{n} = \max_{\gamma}\{\|\gamma\|_{\mu_{n}, E}^{-1}\},
$$
where $\max$ is taken  over all the $(\mu_{n}, E)$-simple closed
geodesics,  satisfies the three conditions in Lemma~\ref{cal}.

By Corollary 6.6 of \cite{DH},  there are at most $K$  $(\mu_{n},
E)$-simple closed geodesics which has hyperbolic length less than
$\log(\sqrt 2 +1)$.  This implies that we can have $c_{0}>0$ such
that
$$
x_{0} \leq c_{0}.
$$
It is the first condition in Lemma~\ref{cal}. From Lemma~\ref{lip}
we can take $b_{0}=e^{2D}$.

Recall that we use $d$ to denote the degree of $f$.   Let  $k_{0} =
\log d + 2 D$ and   $m_{0} = kl_{0}$. Let
\begin{equation}\label{k1}
k_{1} = k_{0} + 4 m_{0} D +1 .
\end{equation}
In particular,
$k_{1}
> \log d + 2 D +1$. Let $A
> -\log \log(\sqrt 2 + 1)$ be the constant in Lemma~\ref{gap1}. In
Lemma~\ref{ineq'}, take
$$
B = A + (K +1)k_{1}
$$
and let $M$ denote the corresponding constant there. Let
$$
M_{0} = \max\{e^{B}, 2^{l_{0} + 1} M\}.
$$

It  remains to prove that if $x_{n}
> M_{0}$, then $x_{n+m_{0}} <  x_{n}$.  To see this, suppose that
$x_{n} > M_{0}$. It follows that there is a $(\mu_{n}, E)$-simple
close geodesic such that $w_{E}(\gamma, [\mu_{n}])
> B$. Then by the choice of the numbers $k_{1}$ and $B$, and the fact that there are at
most $K$ $(\mu_{n}, E)$-simple closed geodesics which have
hyperbolic length less than $\log(\sqrt 2 +1)$, one can take an
interval $[a, b]$ such that
\begin{itemize}
 \item[1.] $A < a < b< B$,
 \item[2.]  $b - a = k_{1}$,
 \item[3.]  $[a, b] \cap L_{\mu_{n}} = \emptyset$.
 \end{itemize}

It follows that $\Gamma_{\mu_{n}}^{b} \ne \emptyset$ and therefore
is a $f-$stable multicurve by Lemma~\ref{gap1}. Now for each $i = 0,
1, \cdots, l_{0}$, let
$$
[a_{i}, b_{i}] = [a + 2kiD, b - 2ki D].
$$
By Lemma~\ref{lip},  the gap condition $b - a = k_{1}$, and
(\ref{k1}),  it follows that each family
$\Gamma_{\mu_{n+ki}}^{b_{i}}$,  $0\le i \le l_{0}$,  contains the
same set of homotopy classes of simple closed curves as
$\Gamma_{\mu_{n}}^{b}$. Let us simply denote each of  them by
$\Gamma$. Now for each $0 \le i \le l_{0}-1$, let $\mu = \mu_{n+ki}$
and $\nu = \mu_{n+k(i+1)}$, and let $[a_{i}, b_{i}]$ be the
corresponding gap interval. Then the conditions in Lemma~\ref{ineq'}
are satisfied with the constants $A$ and $B$ given as above. By
Lemma~\ref{ineq'}, we have
$$
 \sum_{\gamma\in \Gamma}
\frac{1}{\|\gamma\|_{\mu_{n+k(i+1)}, E}}  \leq \frac{1}{2}
\sum_{\gamma\in \Gamma} \frac{1}{\|\gamma\|_{\mu_{n+ki}, E}}  + M
$$
for $0 \le i \le l_{0}-1$. It follows from $m_{0} = kl_{0}$ that
\begin{equation}\label{equa-f1}
 \sum_{\gamma\in \Gamma}
\frac{1}{\|\gamma\|_{\mu_{n+m_{0}}, E}}  \leq \frac{1}{2^{l_{0}}}
 \sum_{\gamma\in \Gamma}
\frac{1}{\|\gamma\|_{\mu_{n}, E}}  + 2M.
\end{equation}

Since
$$
 \sum_{\gamma\in \Gamma}
\frac{1}{\|\gamma\|_{\mu_{n}, E}} \ge  x_{n}
> M_{0} \ge 2^{l_{0} + 1} M,$$ it follows that
\begin{equation}\label{equa-f2}
M < \frac{1}{2^{l_{0}+1}} \sum_{\gamma\in \Gamma}
\frac{1}{\|\gamma\|_{\mu_{n}, E}}.
\end{equation}
From (\ref{equa-f1}) and (\ref{equa-f2}), we have
\begin{equation}\label{final}
 \sum_{\gamma\in \Gamma}
\frac{1}{\|\gamma\|_{\mu_{n+m_{0}}, E}}  < \frac{1}{2^{l_{0}-1}}
 \sum_{\gamma\in \Gamma}
\frac{1}{\|\gamma\|_{\mu_{n}, E}} .
\end{equation}
Since the number of the elements in $\Gamma$ is at most $K$, it
follows that
$$
 \sum_{\gamma\in \Gamma}
\frac{1}{\|\gamma\|_{\mu_{n}, E}}  \le K x_{n}.
$$
From (\ref{K}) and (\ref{final}),  we have
$$
x_{n+m_{0}} \le   \sum_{\gamma\in \Gamma}
\frac{1}{\|\gamma\|_{\mu_{n+m_{0}}, E}} < \frac{1}{2^{l_{0}-1}}
 \sum_{\gamma\in \Gamma}
\frac{1}{\|\gamma\|_{\mu_{n}, E}} \le \frac{K}{2^{l_{0}-1}} x_{n} <
x_{n}.
$$

\end{proof}

The Main Theorem now follows from Lemmas~\ref{fp},~\ref{fp2}, and
\ref{bg'}.

\bibliographystyle{amsalpha}

\begin{thebibliography}{DH}


\bibitem{A} L. V. Ahlfors, {\em Lectures on Quasiconformal Mappings},Van Nostrand, 1966.





\bibitem{CJS} G. Cui, Y. Jiang, and D. Sullivan,
\newblock On geometrically finite branched covering
maps-II. Realization of rational maps,
\newblock {\it Complex Dynamics and Related Topics,
New Studies in Advanced Mathematics, 2004, The International Press,
15-29}.


\bibitem{CJS2} G. Cui, Y. Jiang, and D. Sullivan,
\newblock On geometrically finite branched covering-I. Locally
combinatorial attracting,
\newblock {\it Complex Dynamics and Related Topics,
New Studies in Advanced Mathematics, 2004, The International Press,
1-14}.

\bibitem{CT} G. Cui and L. Tan,
\newblock A characterization of hyperbolic rational maps,
preprint, arXiv:math:DS/0703380.

\bibitem{DH} A. Douady and J. H. Hubbard,
\newblock A proof of Thurston's topological
characterization of rational functions,
\newblock {\it Acta Math.}, Vol. {\bf 171}, 1993, 263-297.

\bibitem{Ga} F. Gardiner,
\newblock {\it Teichm\"uller Theory and Quadratic Differentials},
\newblock John Wiley \& Sons, 1987.









\bibitem{Mc} C. McMullen,
\newblock {\it Complex dynamics and Renormalization},
\newblock Ann. of Math. Studies, {\bf 79}, 1994.




\bibitem{T} L. Tan,
\newblock On Cui's method and applications of Thurston theorem,
\newblock {\it Complex Dynamics and Related Topics,
New Studies in Advanced Mathematics, 2004, The International Press,
465-494}.


\bibitem{Zh} G. Zhang,
\newblock Topological models of polynomials of simple
siegel disk type,
\newblock {\it CUNY Graduate Center PH.D. Thesis}, 2002.

\end{thebibliography}

\end{document}